\definecolor{green}{rgb}{0,0.8,0} 
\newtheorem{theorem}{Theorem}[section]
\newtheorem{corollary}[theorem]{Corollary}
\newtheorem{lemma}[theorem]{Lemma}
\newtheorem{proposition}[theorem]{Proposition}
\theoremstyle{definition}
\theoremstyle{remark}
\newtheorem{remark}[theorem]{Remark}
\numberwithin{equation}{section}
\newcommand{\nrm}[1]{\Vert#1\Vert}
\newcommand{\abs}[1]{\vert#1\vert}
\newcommand{\brk}[1]{\langle#1\rangle}
\newcommand{\set}[1]{\{#1\}}
\newcommand{\br}[1]{\overline{#1}}
\newcommand{\dist}{\mathrm{dist}}
\renewcommand{\Re}{\mathrm{Re}}
\renewcommand{\Im}{\mathrm{Im}}
\newcommand{\aeq}{\sim}
\newcommand{\aleq}{\lesssim}
\newcommand{\lap}{\triangle}
\newcommand{\ud}{\mathrm{d}}
\newcommand{\rd}{\partial}
\newcommand{\nb}{\nabla}
\newcommand{\bb}{\Big}
\newcommand{\alp}{\alpha}
\newcommand{\bt}{\beta}
\newcommand{\gmm}{\gamma}
\newcommand{\dlt}{\delta}
\newcommand{\eps}{\epsilon}
\newcommand{\lmb}{\lambda}
\newcommand{\sgm}{\sigma}
\newcommand{\bfD}{{\bf D}}
\newcommand{\bfE}{{\bf E}}
\newcommand{\bbR}{\mathbb R}
\newcommand{\bbZ}{\mathbb Z}
\newcommand{\calC}{\mathcal C}
\newcommand{\calI}{\mathcal I}
\newcommand{\calS}{\mathcal S}
\newcommand{\covD}{\bfD}
\newcommand{\pfstep}[1]{\vskip.5em {\it #1}.}
\begin{document}

\title[]{Finite energy global well-posedness of the Chern-Simons-Higgs equations in the Coulomb gauge}
\author{Sung-Jin Oh}%
\address{Department of Mathematics, UC Berkeley, Berkeley, CA 94720}%
\email{sjoh@math.berkeley.edu}%


\begin{abstract}
In a recent paper, Selberg-Tesfahun \cite{Selberg:2012vb} proved that the abelian Chern-Simons-Higgs system (CSH) is globally well-posed for finite energy initial data under the Lorenz gauge condition. 
It has been suspected by Huh \cite{Huh:2011cp}, however, that such a result should hold in the \emph{Coulomb gauge} as well.
In this note, we give an affirmative answer to this question by first establishing low regularity local well-posededness of (CSH) in the Coulomb gauge for initial data set $(f, g) \in H^{\gmm}_{x} \times H^{\gmm-1}_{x}$ for any $\gmm > 3/4$. 
Then by conservation of energy, global well-posedness for (CSH) in the energy space $(f, g) \in H^{1}_{x} \times L^{2}_{x}$ follows rather immediately.
\end{abstract}
\maketitle
\section{Introduction}
Let $\bbR^{1+2}$ be the Minkowski space of signature $(-, +, +)$. 
The \emph{abelian Chern-Simons-Higgs system} on $\bbR^{1+2}$ takes the form
\begin{equation} \label{eq:CSH} \tag{CSH}
\left\{
\begin{aligned}
	F_{\mu \nu} = & \eps_{\mu \nu \lmb} \Im (\phi \br{\covD^{\lmb} \phi}), \\
	(- \covD_{0}^{2} + \covD_{1}^{2} + \covD_{2}^{2}) \phi = & m\phi + W(\phi),
\end{aligned}
\right.
\end{equation}
where $\phi$ is a complex-valued function (Higgs field), $A_{\mu}$ is a real-valued 1-form (Chern-Simons potential), $F = \ud A$, $\covD_{\mu} := \rd_{\mu} - i A_{\mu}$ and $\eps_{\mu \nu \lmb}$ is the standard volume form on $\bbR^{1+2}$, i.e. the unique 3-form such that $\eps_{012} = 1$.  The non-negative number $m \geq 0$ is the mass of the Higgs field and $W(\phi)$ is a self-interaction potential of the form
\begin{equation*}
	W(\phi) := \phi V'(\abs{\phi}^{2})
\end{equation*}
where $V(r)$ is a polynomial in $r$ vanishing at $0$. 

In this note, we establish local well-posedness (LWP) of \eqref{eq:CSH} in the Coulomb gauge $\rd_{1} A_{1} + \rd_{2} A_{2} = 0$ for data $(\phi, \covD_{t} \phi)(0) \in H^{\gmm}_{x} \times H^{\gmm-1}_{x}$ with $\gmm > 3/4$. We remark that this is $1/4$ away from the optimal regularity predicted by scaling considerations. Combined with the conserved energy of \eqref{eq:CSH}, global well-posedness for $\gmm=1$ (i.e., finite energy GWP) follows rather easily under appropriate conditions on $V$.
For the precise statement of the main theorems, we refer to \S \ref{subsec:mainThm}.

The \eqref{eq:CSH} model, more specifically the self-dual case $V(\abs{\phi}^{2}) = \frac{1}{16} \abs{\phi}^{2} (1-\abs{\phi}^{2})$, was first proposed by Hong-Kim-Pac \cite{hong1990multivortex} and Jackiw-Weinberg \cite{jackiw1990self} in the context of theory of planar vortex solutions\footnote{We remark that there is a vast literature, both mathematical and physical, on the study of vortex solutions to various Chern-Simons models, with particular attention to the self-dual case. It would be impossible to cover it in an adequate manner in this brief introduction; we refer the reader to the monograph \cite[Chapter 5]{MR1838682} and the references therein.}. 
Recently, the initial value problem for \eqref{eq:CSH} has received considerable attention.
In particular, after the works of Chae-Choe \cite{Chae:2002eu}, Huh \cite{Huh:2007wm}, \cite{Huh:2011cp} and Bournaveas \cite{MR2539222}, finite energy GWP of \eqref{eq:CSH} was first established by Selberg-Tesfahun \cite{Selberg:2012vb} under the Lorenz gauge condition $-\rd_{0} A_{0} +\rd_{1} A_{1} + \rd_{2} A_{2} = 0$. 
The regularity condition for local well-posedness (LWP) in the Lorenz gauge has been subsequently improved by Huh and the author \cite{Oh:2012uq} to $\gmm > 3/4$. Our results, therefore, extend the results of \cite{Selberg:2012vb} and \cite{Oh:2012uq} to the Coulomb gauge, and give a positive answer to the question raised in \cite{Huh:2011cp}, namely, whether finite energy GWP of \eqref{eq:CSH} could be proved in the Coulomb gauge.

%

Under the Coulomb gauge condition, the gauge potential components $A_{\mu}$ obey elliptic equations with quadratic terms in $\phi$ on the right-hand side. As a consequence, the leading order nonlinearity of the wave equation for $\phi$ becomes essentially cubic. 
Thanks to this feature, we are able to reach the regularity $\gmm > 3/4$ \emph{without} any use of null structure for the wave equation. This is in contrast to the Lorenz gauge setting \cite{Selberg:2012vb}, \cite{Oh:2012uq}, in which \eqref{eq:CSH} reduces to a system of wave equations with quadratic (and higher) nonlinearities, for which null structure \emph{has to be} taken into account in order to reach the same level of regularity, in view of the well-known counterexamples of Lindblad \cite{Lindblad:1996ws}. 
On the other hand, as we shall see below, we make crucial use of the special structure of the elliptic equations for $A_{\mu}$ in the Coulomb gauge.

There are two difficulties to be addressed when trying to prove LWP of \eqref{eq:CSH} in the Coulomb gauge for large, low regularity initial data.
 One is the presence\footnote{We remark that in the Lorenz gauge, this term is non-existent thanks to the gauge condition $-\rd_{t} A_{0} + \rd_{1} A_{1} + \rd_{2} A_{2} = 0$.} of the term $i \rd_{t} A_{0} \phi$ in the wave equation after expanding out all covariant derivatives, i.e.,
\begin{equation*}
	\Box \phi - m \phi = - 2 i A_{0} \rd_{t} \phi + 2 i A_{1} \rd_{1} \phi + 2 i A_{2} \rd_{2} \phi - i \rd_{t} A_{0} \phi + (-A_{0}^{2} + A_{1}^{2} + A_{2}^{2}) \phi + W(\phi),
\end{equation*}
where $\Box := -\rd_{t}^{2} + \lap$. As noted in \cite{Huh:2011cp}, the potential $\rd_{t} A_{0}$ obeys an elliptic equation which does not yield favorable estimates for regularity $\gmm \leq 1$. We refer to \cite{Huh:2011cp} for a more detailed discussion. 

The second difficulty is the coupled nature of the elliptic equations for $A_{\mu}$. More precisely, expanding out all covariant derivatives in the elliptic equations for $A_{\mu}$ in the Coulomb gauge, we arrive at the elliptic system
\begin{equation*}
\left\{
\begin{aligned}
	\lap A_{1} =& \rd_{2} \big[ \Im ( \phi \br{\rd_{t} \phi}) + A_{0} \abs{\phi}^{2} \big], \\
	\lap A_{2} =& - \rd_{1} \big[ \Im ( \phi \br{\rd_{t} \phi}) + A_{0} \abs{\phi}^{2} \big], \\
	\lap A_{0} =& 
	- \rd_{1} \big[ \Im(\phi \br{\rd_{2} \phi}) + \rd_{2} \Im(\phi \br{\rd_{1} \phi}) + A_{2} \abs{\phi}^{2} - A_{1} \abs{\phi}^{2} \big].
\end{aligned}
\right.
\end{equation*}

Due to the presence of $A_{\mu}$'s on the right-hand side, it is not immediately clear whether this system can be inverted without a smallness assumption on $\abs{\phi}^{2}$.

In fact, these two difficulties are closely related, and thus will be resolved simultaneously. Our main idea is to rearrange the wave equations to 
\begin{equation*}
\Box \phi + i \rd_{t} (A_{0} \phi) = m \phi + 2 i A_{1} \rd_{1} \phi + 2 i A_{2} \rd_{2} \phi  - i A_{0} \covD_{t} \phi + (A_{1}^{2} + A_{2}^{2}) \phi +W(\phi).
\end{equation*}
and integrate $\rd_{t}$ in $i \rd_{t} (A_{0} \phi)$ by parts in the Duhamel formula for the wave equation; we dub the resulting formula the \emph{twisted Duhamel formula} (Lemma \ref{lem:tDuhamel}). Thanks to a cancellation structure in the elliptic equation for $A_{0}$, the resulting integral turns out to obey more favorable estimates. The boundary terms from the integration-by-parts, on the other hand, allow us to directly estimate $\covD_{t} \phi= (\rd_{t} - i A_{0}) \phi$ instead of $\rd_{t} \phi$. This reveals a \emph{hierarchical structure} of the above elliptic system, which allows us to invert it without any smallness assumptions. More precisely, we may first solve for $A_{1}, A_{2}$ in terms of $\phi, \covD_{t} \phi$, and then solve for $A_{0}$ in terms of $\phi, A_{1}$ and $A_{2}$. For a more detailed discussion, we refer to \S \ref{subsec:Str4ellipticEq}.

We conclude this introduction by briefly noting the recent progress on the initial value problem for other related Chern-Simons models.
After the initial work of Huh \cite{MR2290338}, various authors such as Bournaveas-Candy-Machihara \cite{MR2975691}, \cite{Bournaveas:2013vk}, Huh \cite{MR2600515}, Huh-Oh \cite{Oh:2012uq}, Okamoto \cite{Okamoto:fk} have contributed to the understanding of the \emph{Chern-Simons-Dirac equations} (CSD). On a non-relativistic system called the \emph{Chern-Simons-Schr\"odinger equations}, an exciting progress has been recently made by Liu-Smith-Tataru \cite{Liu:2012vx}, who proved almost optimal regularity LWP for small data. This raises the question whether a similar result can be established for relativistic systems such as \eqref{eq:CSH} and (CSD).

\begin{remark} 
We would like to point out a recent preprint of Bournaveas-Candy-Machihara \cite{Bournaveas:2013vk} concerning (CSD), which appeared while this note was being prepared. It shares some similarities with the present note: Its main result is a LWP result for (CSD) in the Coulomb gauge for regularity 1/4 away from optimality, extending the previous result of Huh and the author \cite{Oh:2012uq} in the Lorenz gauge. Moreover, it avoids using any null structure as well. However, \eqref{eq:CSH} possesses additional difficulties concerning the elliptic equations for $A_{\mu}$ as discussed above, the resolution of which, the author believes, is the main contribution of the present note.
\end{remark}


\subsection{Basic properties of \eqref{eq:CSH}}
Some basic properties of \eqref{eq:CSH} are in order. First, note that when $m = 0$ and $V \equiv 0$, then \eqref{eq:CSH} is invariant under the scaling
\begin{equation*}
	(A_{\mu}, \phi)(t,x) \mapsto (\lmb^{-1} A_{\mu}, \lmb^{-1/2} \phi)(t/\lmb, x/\lmb)
\end{equation*}
for any $\lmb > 0$. The scale-invariant Sobolev space is $(A_{\mu}, \phi) \in L^{2}_{x} \times \dot{H}^{1/2}_{x}$. Heuristically, this should be the optimal space in terms of regularity for well-posedness of \eqref{eq:CSH}, even in the presence of non-trivial $m$ and $V(\phi)$.

The \eqref{eq:CSH} system possesses a conserved energy, which is of the form
\begin{equation} \label{eq:E4CSH}
	\bfE[t] :=  \frac{1}{2} \int_{\bbR^{2}}  \abs{\covD_{t} \phi}^{2} + \sum_{j = 1,2} \abs{\covD_{j}\phi}^{2}(t, x) + m \abs{\phi}^{2} + V(\abs{\phi}^{2}) \, \ud x.
\end{equation}

For sufficiently regular solution $(A_{\mu}, \phi)$ to \eqref{eq:CSH} on a time interval $I$, we have \emph{conservation of energy}, namely
\begin{equation} \label{eq:energyConsv}
	\bfE[t_{1}] = \bfE[t_{2}] \hbox{ for any } t_{1}, t_{2} \in I,
\end{equation}
which may be easily justify by differentiating \eqref{eq:E4CSH} in $t$.
%
%

Given a real-valued function $\chi$ on $\bbR^{1+2}$, the corresponding \emph{gauge transform} of $(A_{\mu}, \phi)$ is defined to be
\begin{equation} \label{eq:gt4CSH}
	(A_{\mu}, \phi) \mapsto (A_{\mu} + \rd_{\mu} \chi, e^{- i \chi} \phi).
\end{equation}

Note that \eqref{eq:CSH} is covariant under such gauge transforms. This gives rise to \emph{gauge ambiguity} for solutions to \eqref{eq:CSH}, i.e., the existence of infinitely many equivalent descriptions of a single solution, which are connected to each other by a gauge transform \eqref{eq:gt4CSH}. In order to carry out analysis, we will need to fix a specific description. In this note, we shall achieve this by prescribing an additional condition for our representative to satisfy, i.e.,
\begin{equation*}
	\rd_{1} A_{1} + \rd_{2} A_{2} = 0.
\end{equation*}

Such a choice is referred to as the \emph{Coulomb gauge}. Under this condition, \eqref{eq:CSH} leads to the following system of equations for $(A_{\mu}, \phi)$:

\begin{equation} \label{eq:CSH-C} \tag{CSH-Coulomb}
\left\{
\begin{aligned}
	\lap A_{1} =& \rd_{2} \Im ( \phi \br{\covD_{t} \phi}), \\
	\lap A_{2} =& - \rd_{1} \Im ( \phi \br{\covD_{t} \phi}), \\
	\lap A_{0} =& 
	- \rd_{1} \Im(\phi \br{\covD_{2} \phi}) + \rd_{2} \Im(\phi \br{\covD_{1} \phi}) \\
	\Box \phi + i \rd_{t} (A_{0} \phi) =&
	 m \phi + 2 i A_{1} \rd_{1} \phi + 2 i A_{2} \rd_{2} \phi  \\
	 &- i A_{0} \covD_{t} \phi + (A_{1}^{2} + A_{2}^{2}) \phi +W(\phi).
\end{aligned}
\right.
\end{equation}

\subsection{Main results} \label{subsec:mainThm}
In the rest of this note, we shall be concerned with the initial value problem for \eqref{eq:CSH-C}. For $\gmm \geq 1/2$, a triple $(f, g, a_{i})$ of complex-valued functions $f, g$ and a real-valued 1-form $a_{i}$ on $\bbR^{2}$ is a \emph{Coulomb} $\gmm$-\emph{initial data set} of \eqref{eq:CSH} if
\begin{enumerate}
\item $(f, g) \in H^{\gmm}_{x} \times H^{\gmm-1}_{x}$, and
\item The 1-form $a_{i}$ is the unique solution in $\dot{H}^{\gmm'}_{x}$ to the div-curl system
\begin{align} 
	\rd_{1} a_{1} + \rd_{2} a_{2} = & 0,  \label{eq:coulomb4id} \\
	\rd_{1} a_{2} - \rd_{2} a_{1} = & \Im(f \br{g}). \label{eq:const4CSH}
\end{align}
\end{enumerate}

The first equation \eqref{eq:coulomb4id} is the Coulomb gauge condition on $a_{i}$, whereas \eqref{eq:const4CSH} is the \emph{constraint equation} imposed by \eqref{eq:CSH}.

\begin{remark} 
Note that we do not lose much generality in restricting our attention to the Coulomb gauge, in the sense that any initial data of \eqref{eq:CSH} can be transformed into the Coulomb gauge by solving a Poisson equation. More precisely, a general initial data set of \eqref{eq:CSH} is a triple $(f, g, a_{i})$, where $a_{i}$ satisfies only the constraint equation \eqref{eq:const4CSH}. Then performing a gauge transform by $\chi$, where $\chi$ is obtained by solving
\begin{equation*}
	\lap \chi = - \rd_{1} a_{1} - \rd_{2} a_{2},
\end{equation*}
we arrive at a gauge-equivalent Coulomb initial data set. That $\chi$ satisfies a \emph{linear} elliptic PDE essentially comes from the fact that that the gauge group of \eqref{eq:CSH} is abelian, viz. $U(1)$.
\end{remark}

We have not yet specified the regularity $\gmm'$ of $a_{i}$. In this note, we will set $\gmm' = 1/2$; this is because for $\gmm > 3/4$, which is the range of regularity we consider here, condition (1) is enough to guarantee the existence of a unique solution in $\dot{H}^{1/2}_{x}$. Indeed, we have the following lemma.
\begin{lemma} 
Given any complex-valued $(f, g) \in H^{\gmm}_{x} \times H^{\gmm-1}_{x}$ for $\gmm > 3/4$, there exists a unique solution $a_{i}$ to the system \eqref{eq:coulomb4id}--\eqref{eq:const4CSH} in $\dot{H}^{1/2}_{x}$.
\end{lemma}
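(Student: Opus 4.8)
The plan is to solve the div-curl system \eqref{eq:coulomb4id}--\eqref{eq:const4CSH} by reducing it to a single scalar Poisson equation. On $\bbR^{2}$ the Coulomb condition \eqref{eq:coulomb4id} holds if and only if $a_{i} = \eps_{ij} \rd_{j} \psi$ for a real-valued potential $\psi$ (with $\eps_{ij}$ the two-dimensional Levi-Civita symbol, $\eps_{12} = 1$), and in that case the constraint \eqref{eq:const4CSH} becomes $- \lap \psi = \Im(f \br{g})$. I would therefore define
\begin{equation*}
	a_{i} := \eps_{ij} \rd_{j} (-\lap)^{-1} \Im(f \br{g}),
\end{equation*}
which solves \eqref{eq:coulomb4id}--\eqref{eq:const4CSH} by construction. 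Since the Fourier multiplier $\rd_{j} (-\lap)^{-1}$ is homogeneous of degree $-1$ and hence maps $\dot{H}^{-1/2}_{x}$ boundedly into $\dot{H}^{1/2}_{x}$, the existence of a solution in $\dot{H}^{1/2}_{x}$ (together with the quantitative bound $\nrm{a_{i}}_{\dot{H}^{1/2}_{x}} \aleq \nrm{f}_{H^{\gmm}_{x}} \nrm{g}_{H^{\gmm-1}_{x}}$) reduces to the bilinear estimate
\begin{equation*}
	\nrm{f \br{g}}_{\dot{H}^{-1/2}_{x}(\bbR^{2})} \aleq \nrm{f}_{H^{\gmm}_{x}} \nrm{g}_{H^{\gmm-1}_{x}}, \qquad \gmm > 3/4.
\end{equation*}

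For uniqueness, I would take the difference $b_{i}$ of two solutions in $\dot{H}^{1/2}_{x}$, so that $\rd_{1} b_{1} + \rd_{2} b_{2} = 0$ and $\rd_{1} b_{2} - \rd_{2} b_{1} = 0$. On the Fourier side these read $\xi_{1} \wht{b}_{1} + \xi_{2} \wht{b}_{2} = 0$ and $\xi_{1} \wht{b}_{2} - \xi_{2} \wht{b}_{1} = 0$; since $(\xi_{1}, \xi_{2})$ and $(-\xi_{2}, \xi_{1})$ span $\bbR^{2}$ for every $\xi \neq 0$, this forces $\wht{b}$ to be supported at the origin, so $b_{i}$ is a polynomial. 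Because $1/2 < 2/2$, the space $\dot{H}^{1/2}_{x}(\bbR^{2})$ consists of genuine tempered distributions and embeds into $L^{4}_{x}(\bbR^{2})$, which contains no nonzero polynomial; hence $b_{i} \equiv 0$.

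The remaining, and essentially only substantive, task is the bilinear estimate. One first notes that $f \br{g}$ is a well-defined tempered distribution, since $\gmm + (\gmm-1) = 2\gmm - 1 > 0$. I would then decompose $f$ and $g$ into Littlewood-Paley pieces and split $f \br{g}$ into low$\times$high, high$\times$low, and high$\times$high interactions, estimating each by Bernstein's inequality together with the Sobolev embeddings $H^{\gmm}_{x}, \dot{H}^{1/2}_{x} \hookrightarrow L^{4}_{x}$ and $H^{\gmm}_{x} \hookrightarrow \dot{H}^{s}_{x}$ for $0 \leq s \leq \gmm$. Two points need care. The output at frequencies $\aleq 1$ must be handled separately, since the weight $\abs{\xi}^{-1}$ defining $\dot{H}^{-1/2}_{x}$ is singular at the origin; this is harmless because $\abs{\xi}^{-1} \in L^{1}_{loc}(\bbR^{2})$ and the portion of $f \br{g}$ landing at low frequencies (coming from the low$\times$low and the resonant high$\times$high interactions) lies in $L^{1}_{x}$ — hence has bounded Fourier transform — where one uses that the low-frequency parts of $f$ and of $g$ are square-integrable, which is precisely where the \emph{inhomogeneous} norms on the right-hand side enter. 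The high-frequency output, on the other hand, is where the restriction $\gmm > 3/4$ originates: the Littlewood-Paley bookkeeping there reduces to the standard Sobolev multiplication numerology $s_{1} + s_{2} - s_{0} \geq n/2$ with $(s_{1}, s_{2}, s_{0}, n) = (\gmm, \gmm-1, -1/2, 2)$, i.e.\ $2\gmm - \tfrac{1}{2} \geq 1$, which $\gmm > 3/4$ comfortably ensures. The main obstacle is exactly this last step — carrying out the dyadic summation for the high$\times$high (and low$\times$high) interactions at the borderline regularity; the rest of the argument is soft.
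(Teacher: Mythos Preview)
Your proposal is correct and follows essentially the same route as the paper: reduce existence to the bilinear estimate $\Im(f\br{g}) \in \dot{H}^{-1/2}_{x}$ and write down the explicit formula $a_{i} = \eps_{ij}\rd_{j}(-\lap)^{-1}\Im(f\br{g})$. The only differences are packaging --- the paper obtains the bilinear estimate by a forward reference to Lemma~\ref{lem:abstractEst4A:1} (whose proof invokes the Sobolev product rule, Lemma~\ref{lem:SobProd}, after splitting on the frequency of $g$) rather than the direct Littlewood-Paley argument you sketch, and the paper leaves the uniqueness step implicit whereas you spell out the Fourier-side argument.
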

\begin{proof} 
Borrowing \eqref{eq:abstractEst4A:1} from Lemma \ref{lem:abstractEst4A:1} (to be proved later), we see that $\Im(f \overline{g}) \in \dot{H}^{-1/2}_{x}$. It is then easy to check that
\begin{equation*}
	(a_{1}, a_{2}) = ((-\lap)^{-1} \rd_{2} \Im(f \overline{g}), - (-\lap)^{-1} \rd_{1} \Im(f \overline{g})) 
\end{equation*}
is the unique solution to \eqref{eq:coulomb4id} and \eqref{eq:const4CSH} in $\dot{H}^{1/2}_{x}$. \qedhere
\end{proof}

By a solution to the corresponding initial value problem (IVP), we mean a solution $(A_{\mu}, \phi)$ to \eqref{eq:CSH-C} such that
\begin{equation*}
	A_{i} \vert_{t=0} = a_{i}, \quad (\phi, \covD_{t} \phi) \vert_{t=0} = (f, g).
\end{equation*}

We are now ready to state our first main theorem on the LWP of \eqref{eq:CSH-C}.
\begin{theorem}[Local well-posedness] \label{thm:lwp4CSH}
For $3/4 < \gmm \leq 1$ and a polynomial $V(r)$ such that\footnote{When $\gmm = 1$, we may let $V$ be of any degree.} $\deg V < 1+\frac{1}{1-\gmm}$ and $V(0) = 0$, \eqref{eq:CSH-C} is locally well-posed for data with regularity $H^{\gmm}_{x}$. More precisely, given a Coulomb $\gmm$-initial data set $(a_{i}, f, g)$ of \eqref{eq:CSH}, there exists $T = T(m, \nrm{(f,g)}_{H^{\gmm}_{x} \times H^{\gmm-1}_{x}}, \deg V) > 0$ and a solution $(A_{\mu}, \phi)$ to the IVP for \eqref{eq:CSH-C} on $(-T, T) \times \bbR^{2}$ such that
\begin{equation*}
	\nrm{(\phi, \covD_{t} \phi)}_{S^{\gmm}(I) \times S^{\gmm-1}(I)} \aleq \nrm{(f,g)}_{H^{\gmm}_{x} \times H^{\gmm-1}_{x}},
\end{equation*}
where $I := (-T, T)$ and $S^{\gmm}(I)$ is a function space defined in \S \ref{subsec:ftnSpace} such that $S^{\gmm}(I) \subset C_{t}(I; H^{\gmm}_{x})$. Uniqueness of $(A_{\mu}, \phi)$ holds in the space
\begin{equation*}
	(A_{\mu}, \phi, \covD_{t} \phi) \in C_{t} (I; \dot{H}^{1/2}_{x}) \times S^{\gmm}(I) \times S^{\gmm-1}(I).
\end{equation*}

Finally, smooth dependence on initial data and persistence of regularity hold.
\end{theorem}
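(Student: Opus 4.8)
\emph{Proof strategy.} The plan is to run a Picard iteration for the wave-equation component, with the gauge potentials $A_\mu$ eliminated via their elliptic equations in the hierarchical fashion described in the introduction. Fix the Coulomb $\gmm$-data $(a_i, f, g)$ and take as iteration variable the pair $\psi := (\phi, \covD_t\phi)$, sought in a ball of radius $\sim \nrm{(f,g)}_{H^\gmm_x \times H^{\gmm-1}_x}$ in $S^\gmm(I) \times S^{\gmm-1}(I)$ with $I = (-T, T)$ and $T$ small to be chosen. Given $\psi$, first solve the two elliptic equations for $A_1, A_2$ in terms of $(\phi, \covD_t\phi)$; then --- crucially --- solve for $A_0$ in terms of $\phi$ and $A_1, A_2$ (equivalently, in terms of $\phi$ and the $\covD_j\phi$), using that once $A_1, A_2$ are known the $A_0$-equation is genuinely linear in $A_0$, so that no smallness on $\abs{\phi}^2$ is required. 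The quantitative inputs are the bilinear elliptic estimates of Lemma~\ref{lem:abstractEst4A:1} and its companions, bounding $A_1, A_2, A_0$ in suitable product/Strichartz norms by $\nrm{\phi}_{S^\gmm(I)}$ and $\nrm{\covD_t\phi}_{S^{\gmm-1}(I)}$, with the choice of norms for $A_1, A_2$ arranged so that $\covD_j\phi$, hence $A_0$, can then be controlled.

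Next, define $\phi$ not by the usual Duhamel formula applied to the full right-hand side of the $\phi$-equation, but by the \emph{twisted Duhamel formula} of Lemma~\ref{lem:tDuhamel}: integrating $\rd_t$ by parts in the term $i\rd_t(A_0\phi)$ produces (i) a modified free evolution, (ii) an interior integral in which $A_0\phi$ occurs with no time derivative, and (iii) boundary contributions at times $0$ and $t$ of schematic form $A_0\phi$ --- precisely the terms recasting $\rd_t\phi$ as $\covD_t\phi = \rd_t\phi - iA_0\phi$ in the natural iteration variable. The cancellation in the $A_0$-equation --- that $-\rd_1\Im(\phi\br{\covD_2\phi}) + \rd_2\Im(\phi\br{\covD_1\phi})$ is a better-behaved combination than either term alone --- is what renders the interior term (ii) estimable at the regularity $\gmm > 3/4$.

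The core of the argument is then the package of multilinear wave estimates needed to close the contraction in $S^\gmm \times S^{\gmm-1}$. Each remaining nonlinear term --- $2iA_1\rd_1\phi + 2iA_2\rd_2\phi$, $-iA_0\covD_t\phi$, $(A_1^2 + A_2^2)\phi$, the interior twisted term, and the Higgs self-interaction $W(\phi) = \phi V'(\abs{\phi}^2)$ --- must be shown to map the ball into a factor-$T^\delta$-smaller ball for some $\delta > 0$ and to be Lipschitz-contractive on differences; after $A_i$ is substituted by its elliptic formula these become effectively trilinear (and, for $W$, higher-degree) estimates, handled via the Strichartz / $X^{s,b}$ structure of $S^\gmm$ by H\"older, Sobolev embedding and bilinear bounds. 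No null structure enters, because after the elliptic substitution the leading nonlinearity is effectively cubic and the Lindblad-type quadratic resonances are absent. The degree restriction $\deg V < 1 + \frac{1}{1-\gmm}$ is exactly the condition under which the polynomial $W(\phi)$ is estimable in $S^\gmm$: since $H^\gmm_x(\bbR^2)$ is not an algebra for $\gmm \leq 1$, one loses derivatives at each multiplication and the admissible degree is capped (for $\gmm = 1$ no cap is needed). Assembling everything, the iteration converges for $T = T(m, \nrm{(f,g)}_{H^\gmm_x \times H^{\gmm-1}_x}, \deg V)$ small, yielding the solution with the asserted bound.

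Finally, smooth (indeed analytic) dependence on data is automatic, every estimate above being polynomial in the iterates; persistence of regularity follows by rerunning the identical scheme in $S^{\gmm'} \times S^{\gmm'-1}$ for $\gmm' \geq \gmm$, the lifespan not shrinking since $T$ depends only on the $H^\gmm_x \times H^{\gmm-1}_x$-norm. Uniqueness in the weaker class $C_t(I;\dot H^{1/2}_x) \times S^\gmm(I) \times S^{\gmm-1}(I)$ is obtained separately: given two solutions with the same data there, one shows a priori that each lies in the contraction space on a possibly shorter subinterval, invokes the uniqueness already established, and bootstraps to all of $I$. The step I expect to be the main obstacle is this package of trilinear estimates together with the bound for the interior twisted term --- in particular, confirming that the $A_0$-cancellation genuinely buys the needed gain of regularity, and that the twisted-Duhamel boundary terms are controlled in $S^\gmm$ uniformly as $T \to 0$, all while threading the norm-bookkeeping through the same hierarchy used to invert the elliptic system.
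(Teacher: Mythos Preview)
Your proposal is correct and follows essentially the same approach as the paper: a Picard iteration based on the twisted Duhamel formula, with $A_1,A_2$ recovered first from $(\phi,\covD_t\phi)$ and then $A_0$ (split as $A_{0,1}+A_{0,2}$, with the $Q_{12}$-type cancellation exploited for $A_{0,1}$) recovered afterward, and the nonlinear terms handled by the multilinear Strichartz-type estimates of Sections~\ref{sec:est4A}--\ref{sec:est4phi}. The only packaging difference is that the paper isolates the absorption step for the interior term $i\int_0^t \cos(t-t')\abs{\nb}\,(A_0^{(n)}\phi^{(n)})\,\ud t'$ into a separate ``energy estimate'' (Proposition~\ref{prop:tDuhamelEnergy}), which requires placing $A_0$ in the auxiliary space $\mathfrak{A}_0^{\gmm} = L^4_t\dot H^{3/4}_x \cap L^4_t\dot H^{\gmm}_x \cap L^2_t C^0_x$ and choosing $\abs{I}\aleq \nrm{A_0}_{\mathfrak{A}_0^{\gmm}}^{-2}$; correspondingly the contraction is run on the pair $(A_0,\phi)\in\mathfrak{A}_0^{\gmm}\times S^{\gmm}$ rather than on $(\phi,\covD_t\phi)$, but this is equivalent to what you describe.
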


\begin{remark} 
From the above control of $\phi$ and $\covD_{t} \phi$, we may obtain estimates for $A_{\mu}$ by inverting the elliptic equations; it turns out that $A_{\mu}$ has better regularity than simply $C_{t} (I; \dot{H}^{1/2}_{x})$. See Section \ref{sec:est4A} for the necessary structure and multilinear estimates.
\end{remark}

As a simple consequence of Theorem \ref{thm:lwp4CSH} and conservation of the energy $\bfE(t)$, we obtain GWP of \eqref{eq:CSH-C} in the energy class $H^{1}_{x} \times L^{2}_{x}$.
\begin{theorem}[Finite energy global well-posedness] \label{thm:gwp4CSH}
Let $V(r)$ be a polynomial such that $V(0) = 0$ and satisfies
\begin{equation*}
	V(r) \geq - \alp^{2} r
\end{equation*}
for some $\alp \geq 0$. Then \eqref{eq:CSH-C} is globally well-posed for data with regularity $H^{1}_{x}$, i.e., the solution given by Theorem \ref{thm:lwp4CSH} exists for all time.
\end{theorem}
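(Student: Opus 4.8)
The plan is to run the standard continuation/bootstrap argument: the local theory of Theorem \ref{thm:lwp4CSH} gives a solution on a time interval whose length depends only on $m$, $\deg V$, and $\nrm{(f,g)}_{H^{1}_{x} \times L^{2}_{x}}$, so it suffices to show that the $H^{1}_{x} \times L^{2}_{x}$ norm of $(\phi, \covD_{t} \phi)(t)$ cannot blow up in finite time. The only conserved quantity available is the energy $\bfE[t]$ from \eqref{eq:E4CSH}, so the whole argument reduces to extracting an a priori bound on $\nrm{(\phi, \covD_{t}\phi)(t)}_{H^{1}_{x} \times L^{2}_{x}}$ from $\bfE[t] = \bfE[0]$, uniformly on the (a priori finite) interval of existence. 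Note that at regularity $\gmm = 1$ Theorem \ref{thm:lwp4CSH} places no restriction on $\deg V$, so no extra hypothesis on $V$ beyond the coercivity-type bound $V(r) \ge -\alp^{2} r$ is needed.

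First I would control the $L^{2}_{x}$ norm of $\phi(t)$. This is the only subtlety: the energy controls $\abs{\covD_{t}\phi}^{2}$, $\abs{\covD_{j}\phi}^{2}$, $m\abs{\phi}^{2}$ and $V(\abs{\phi}^{2})$, but when $m = 0$ there is no direct $L^{2}$ control of $\phi$ itself, and $V(\abs{\phi}^2)$ can be negative. Here I would use the hypothesis $V(r) \ge -\alp^{2} r$: writing the energy as
\begin{equation*}
	\bfE[0] = \bfE[t] \ge \frac{1}{2} \int_{\bbR^{2}} \abs{\covD_{t}\phi}^{2} + \sum_{j} \abs{\covD_{j}\phi}^{2} + (m - \alp^{2}) \abs{\phi}^{2} \, \ud x,
\end{equation*}
so that when $m \ge \alp^{2}$ we already get $\nrm{\phi(t)}_{L^{2}_{x}}$ bounded. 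When $m < \alp^{2}$, I would instead establish a differential inequality for $y(t) := \nrm{\phi(t)}_{L^{2}_{x}}^{2}$: since $\rd_{t}\abs{\phi}^{2} = 2\Re(\br\phi\, \rd_{t}\phi) = 2\Re(\br\phi\, \covD_{t}\phi)$ (the $A_{0}$ term is imaginary and drops out), Cauchy--Schwarz gives $\abs{y'(t)} \le 2 \nrm{\phi(t)}_{L^{2}_{x}} \nrm{\covD_{t}\phi(t)}_{L^{2}_{x}} \le 2 y(t)^{1/2} \bfE[0]^{1/2}$, whence $y(t)^{1/2} \le y(0)^{1/2} + \bfE[0]^{1/2}\abs{t}$ by Gronwall. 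Either way, $\nrm{\phi(t)}_{L^{2}_{x}}$ is bounded on any finite interval by a constant depending only on $\bfE[0]$, $\nrm{f}_{L^{2}_{x}}$, $m$, $\alp$, and the length of the interval.

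Next I would pass from the covariant norms to the ordinary norms. The identity $\nb_{x}\phi = \covD_{x}\phi + i A_{x}\phi$ and $\rd_{t}\phi = \covD_{t}\phi + i A_{0}\phi$ reduce matters to controlling $\nrm{A_{j}\phi}_{L^{2}_{x}}$ and $\nrm{A_{0}\phi}_{L^{2}_{x}}$, which is a standard diamagnetic-type step. Using the elliptic equations in \eqref{eq:CSH-C} together with the estimates of Section \ref{sec:est4A} (or, more elementarily, Hardy--Littlewood--Sobolev: $\nb^{-1}$ of a divergence of an $L^{1}$-type quadratic expression lands in $L^{2}_{x}$ in dimension two, after interpolating with the energy bound on $\covD\phi$), one bounds $\nrm{A_{\mu}(t)}_{L^{\infty}_{x}}$, or at least $\nrm{A_\mu(t)}_{L^q_x}$ for suitable $q$, in terms of $\bfE[0]$ and $\nrm{\phi(t)}_{L^2_x}$ already controlled; pairing this with Gagliardo--Nirenberg interpolation on $\phi$ closes the estimate for $\nrm{\phi(t)}_{H^{1}_{x}}$ and $\nrm{\rd_{t}\phi(t)}_{L^{2}_{x}}$, possibly via one more Gronwall in $t$. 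With this uniform-in-time bound in hand on any finite interval, the local existence time from Theorem \ref{thm:lwp4CSH} stays bounded below as long as the solution exists, so the solution extends globally; persistence of regularity from Theorem \ref{thm:lwp4CSH} then upgrades this to global well-posedness in $H^{1}_{x}$.

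The main obstacle is the $L^{2}$ control of $\phi$ in the massless, sign-indefinite-potential case --- this is exactly where the hypothesis $V(r) \ge -\alp^{2} r$ is used, and it must be handled by the Gronwall argument above rather than by energy conservation alone; everything else is routine once that step and the diamagnetic comparison are in place.
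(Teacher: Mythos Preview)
Your approach is essentially the same as the paper's (Lemma \ref{lem:energy}): continuation from the local theory, Gronwall for $\nrm{\phi(t)}_{L^{2}_{x}}$, then a diamagnetic/elliptic step to pass from covariant to ordinary derivatives. Two small corrections. First, in your Gronwall step the bound $\nrm{\covD_{t}\phi(t)}_{L^{2}_{x}} \le \bfE[0]^{1/2}$ is not available when $V$ can be negative: from the energy and $V(r)\ge -\alp^{2}r$ one only gets $\nrm{\covD_{t}\phi}_{L^{2}_{x}}^{2} \le 2\bfE[0] + \alp^{2}\nrm{\phi}_{L^{2}_{x}}^{2}$, so the feedback term $\alp^{2}y(t)$ must be carried into the Gronwall inequality (the paper does exactly this). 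Second, since the local theory is stated for the pair $(\phi,\covD_{t}\phi)$, you only need $\nrm{\covD_{t}\phi}_{L^{2}_{x}}$, not $\nrm{\rd_{t}\phi}_{L^{2}_{x}}$, so the $A_{0}$ discussion is unnecessary; for the spatial part the paper's concrete choice is Kato's inequality $\abs{\rd_{j}\abs{\phi}}\le\abs{\covD_{j}\phi}$ plus Gagliardo--Nirenberg to get $\phi\in L^{4}_{x}$, then $\nrm{A_{j}}_{L^{4}_{x}}\aleq\nrm{\phi}_{L^{4}_{x}}\nrm{\covD_{t}\phi}_{L^{2}_{x}}$ from the elliptic equation, and H\"older closes $\nrm{A_{j}\phi}_{L^{2}_{x}}$ without a further Gronwall.
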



%

\subsection{Organization of the paper}
We will begin in Section \ref{sec:preliminaries} by setting up notations and introducing basic tools. In particular, we will present a simple version of the \emph{twisted Duhamel formula}, which is our main analytic ingredient.
In Section \ref{sec:est4A}, we will give a discussion of the hierarchical structure of the elliptic equations for $A_{\mu}$, and prove multilinear estimates for controlling $A_{\mu}$.
In Section \ref{sec:est4phi}, we will discuss the structure of the wave equation for $\phi$ in view of the twisted Duhamel formula, and also prove multilinear estimates for controlling the nonlinear terms.
Equipped with these preparations, we will finally give proofs of Theorems \ref{thm:lwp4CSH} and \ref{thm:gwp4CSH} in Section \ref{sec:pf4mainThm}, by setting up a Picard iteration scheme using the twisted Duhamel formula.

\section{Preliminaries} \label{sec:preliminaries}
\subsection{Notations}
\begin{itemize}
\item We will employ the index notation. Greek indices will run over $0, 1, 2$, whereas latin indices will run over only the spatial indices $1, 2$. Repeated upper and lower indices will be summed up.
\item For $k \in \bbZ$, we denote $k^{+} := \max \set{k, 0}$.
\item All functions spaces over $\bbR^{2}$ will be marked with a subscript in $x$, e.g., $L^{r}_{x}$. 
\item The space of all Schwartz functions on $\bbR^{2}$ will be denoted by $\calS_{x}$. 
\item For $\gmm \in \bbR$, $H^{\gmm}_{x}$ and $\dot{H}^{\gmm}_{x}$ will denote the inhomogeneous and homogenous $L^{2}$ Sobolev space of order $\gmm$ on $\bbR^{2}$, respectively, with norms
\begin{equation*}
	\nrm{\varphi}_{H^{\gmm}_{x}} := \nrm{\brk{\nb}^{\gmm} \varphi}_{L^{2}_{x}}, \quad
	\nrm{\varphi}_{H^{\gmm}_{x}} := \nrm{\abs{\nb}^{\gmm} \varphi}_{L^{2}_{x}},
\end{equation*}
where $\abs{\nb} := \sqrt{-\lap}$ and $\brk{\nb} := \sqrt{1-\lap}$.
\item Given a normed vector space $X$ over $\bbR^{2}$ and an interval $I \subset \bbR$, we define the space $L^{q}_{t} (I;X)$ for functions on $I \times \bbR^{2}$ by $\nrm{\nrm{\varphi(t,\cdot)}_{X}}_{L^{q}_{t}}$. 
\item The spatial Fourier transform will be denoted by $\widehat{\varphi}(\xi) := \int e^{2 \pi i x \cdot \xi} \varphi(x) \, \ud x$. 
\item By $A \aleq B$, we mean there exists an implicit constant $C > 0$ such that $A \leq C B$. Dependence of $C$ will be specified by a subscript or in the form $C = C(\cdot)$. $A \aeq B$ will mean $A \aleq B$ and $B \aleq A$.
\end{itemize}

\subsection{Basic inequalities}
We assume that the reader is familiar with the standard inequalities, such as Sobolev, interpolation and Gagliardo-Nirenberg interpolation; see \cite[Appendix A]{MR2233925} for a reference.

We will make use of the following version of the Bernstein inequality.
\begin{lemma}[Bernstein inequality] \label{lem:bernstein}
Let $2 \leq p \leq \infty$ and $B \subset \bbR^{2}$. Then for $\varphi \in \calS_{x}$ with its Fourier support in $B$, i.e., $\mathrm{supp}\, \hat{\varphi} \subset B$, we have
\begin{equation*}
	\nrm{\varphi}_{L^{p}_{x}} \aleq \abs{B}^{\frac{1}{2}-\frac{1}{p}} \nrm{\varphi}_{L^{2}_{x}}.
\end{equation*}
\end{lemma}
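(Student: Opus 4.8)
The plan is to reduce the claim to its two endpoints $p = 2$ and $p = \infty$ and then interpolate. First I would dispose of the trivial observations: when $p = 2$ the asserted inequality is simply $\nrm{\varphi}_{L^{2}_{x}} \le \nrm{\varphi}_{L^{2}_{x}}$; and if $\abs{B} = \infty$ then for $p > 2$ the right-hand side is infinite and there is nothing to prove, so we may assume $\abs{B} < \infty$. No smoothness or geometry of $B$ will be used — only its Lebesgue measure, which enters through a single application of Cauchy--Schwarz.

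For the endpoint $p = \infty$, I would invoke pointwise Fourier inversion $\varphi(x) = \int e^{-2\pi i x \cdot \xi} \wht{\varphi}(\xi) \, \ud \xi$, which is legitimate since $\varphi \in \calS_{x}$ implies $\wht{\varphi} \in \calS_{x} \subset L^{1}_{x}$. This gives $\abs{\varphi(x)} \le \nrm{\wht{\varphi}}_{L^{1}_{x}}$. Since $\wht{\varphi}$ is supported in $B$, Cauchy--Schwarz yields $\nrm{\wht{\varphi}}_{L^{1}_{x}} = \int_{B} \abs{\wht{\varphi}} \le \abs{B}^{1/2} \nrm{\wht{\varphi}}_{L^{2}_{x}}$, and by Plancherel (the transform with the $2\pi$ in the exponent is unitary on $L^{2}_{x}$) the last quantity equals $\abs{B}^{1/2} \nrm{\varphi}_{L^{2}_{x}}$. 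Hence $\nrm{\varphi}_{L^{\infty}_{x}} \le \abs{B}^{1/2} \nrm{\varphi}_{L^{2}_{x}}$, which is the claim in the case $p = \infty$ since $\frac{1}{2} - \frac{1}{\infty} = \frac{1}{2}$.

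For the intermediate range $2 < p < \infty$, I would use the log-convexity of the $L^{p}$ norms (an immediate consequence of H\"older's inequality): writing $\frac{1}{p} = \frac{\theta}{2} + \frac{1-\theta}{\infty}$ with $\theta = 2/p \in (0,1)$, one has $\nrm{\varphi}_{L^{p}_{x}} \le \nrm{\varphi}_{L^{2}_{x}}^{\theta} \nrm{\varphi}_{L^{\infty}_{x}}^{1-\theta}$. Substituting the bound from the previous paragraph gives $\nrm{\varphi}_{L^{p}_{x}} \le \nrm{\varphi}_{L^{2}_{x}}^{\theta} \big( \abs{B}^{1/2} \nrm{\varphi}_{L^{2}_{x}} \big)^{1-\theta} = \abs{B}^{(1-\theta)/2} \nrm{\varphi}_{L^{2}_{x}}$, and since $\frac{1-\theta}{2} = \frac{1}{2} - \frac{1}{p}$ this is exactly the asserted inequality.

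There is essentially no substantive obstacle in this argument; the proof is entirely routine, resting only on Fourier inversion, Plancherel, Cauchy--Schwarz and interpolation. The only two points worth a word of care are that pointwise Fourier inversion is applied — justified by the Schwartz hypothesis — and that the arbitrary set $B$ is handled purely via the measure estimate $\int_{B} \abs{\wht{\varphi}} \le \abs{B}^{1/2}\nrm{\wht{\varphi}}_{L^{2}_{x}}$.
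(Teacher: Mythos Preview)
Your proof is correct and essentially matches the paper's one-line argument (``Hausdorff--Young, H\"older and Plancherel''): your Fourier-inversion bound $\nrm{\varphi}_{L^{\infty}_{x}} \le \nrm{\wht{\varphi}}_{L^{1}_{x}}$ is the $p=\infty$ endpoint of Hausdorff--Young, and your Cauchy--Schwarz on $B$ is the corresponding H\"older step. The only cosmetic difference is that the paper applies Hausdorff--Young directly at each $p$ rather than proving the $p=\infty$ endpoint and interpolating, but the content is identical.
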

\begin{proof} 
This is an easy consequence of Hausdorff-Young, H\"older and Plancherel. \qedhere 
\end{proof}

We will also use the following product rule for Sobolev norms (both homogeneous and inhomogeneous). For a proof, we refer the reader to \cite{DAncona:2012ke}.
\begin{lemma}[Sobolev product rule] \label{lem:SobProd}
Let $\bt_{0}, \bt_{1}, \bt_{2} \in \bbR$ satisfy
\begin{equation*}
	\bt_{0} + \bt_{1} + \bt_{2} = 1, \quad 
	\max \set{\bt_{0}, \bt_{1}, \bt_{2}} < 1.
\end{equation*}

Then for every $\varphi^{1}, \varphi^{2} \in \calS(\bbR^{2})$, we have
\begin{equation} \label{eq:SobProd}
	\nrm{\varphi^{1} \varphi^{2}}_{\dot{H}^{-\bt_{0}}_{x}} 
	\aleq_{\bt_{0}, \bt_{1}, \bt_{2}} \nrm{\varphi^{1}}_{\dot{H}^{\bt_{1}}_{x}} \nrm{\varphi^{2}}_{\dot{H}^{\bt_{2}}_{x}}
\end{equation}

The same estimate holds with $\dot{H}^{\bt_{j}}_{x}$ replaced by the inhomogeneous counterpart $H^{\bt_{j}}_{x}$.
\end{lemma}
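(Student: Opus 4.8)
\medskip
\noindent\textbf{Proof proposal.}\ The plan is to dualize the inequality into a symmetric trilinear estimate and then prove that estimate by a Littlewood--Paley decomposition together with Bernstein's inequality (Lemma~\ref{lem:bernstein}). First note that $\bt_{0}+\bt_{1}+\bt_{2}=1$ and $\max\set{\bt_{0},\bt_{1},\bt_{2}}<1$ force $\bt_{i}\in(-1,1)$ for each $i$ (e.g.\ $\bt_{0}=1-\bt_{1}-\bt_{2}>1-1-1=-1$), so all three homogeneous Sobolev spaces involved are honest function spaces on $\bbR^{2}$, the duality $\dot{H}^{-\bt_{0}}_{x}=(\dot{H}^{\bt_{0}}_{x})^{*}$ holds through the $L^{2}_{x}$ pairing, and at most one of the $\bt_{i}$ can be negative. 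Thus it suffices to prove, for $\varphi^{0},\varphi^{1},\varphi^{2}\in\calS_{x}$ and exponents with $\bt_{0}+\bt_{1}+\bt_{2}=1$, $\bt_{i}<1$, the symmetric bound
\begin{equation*}
	\abs{\int_{\bbR^{2}}\varphi^{0}\varphi^{1}\varphi^{2}\,\ud x}\aleq\nrm{\varphi^{0}}_{\dot{H}^{\bt_{0}}_{x}}\nrm{\varphi^{1}}_{\dot{H}^{\bt_{1}}_{x}}\nrm{\varphi^{2}}_{\dot{H}^{\bt_{2}}_{x}}.
\end{equation*}
If all $\bt_{i}\in[0,1)$ this is immediate: on $\bbR^{2}$ one has $\dot{H}^{\bt_{i}}_{x}\hookrightarrow L^{p_{i}}_{x}$ with $\frac{1}{p_{i}}=\frac{1}{2}-\frac{\bt_{i}}{2}$, and $\sum_{i}\frac{1}{p_{i}}=\frac{3}{2}-\frac{1}{2}\sum_{i}\bt_{i}=1$, so H\"older's inequality finishes the job.

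For the general case I would decompose each factor dyadically, $\varphi^{i}=\sum_{k_{i}}P_{k_{i}}\varphi^{i}$, with $P_{k}$ the Littlewood--Paley projection to frequencies $\abs{\xi}\aeq 2^{k}$. Since the three Fourier supports must sum to $0$, the two largest of $k_{0},k_{1},k_{2}$ are comparable; splitting according to which index is smallest and using the symmetry of the trilinear form, I may assume the lowest frequency is carried by $\varphi^{2}$, so $k_{2}\leq\min\set{k_{0},k_{1}}+O(1)$ and $k_{0}=k_{1}+O(1)$. Putting the two high pieces in $L^{2}_{x}$, the low piece in $L^{\infty}_{x}$, and applying Bernstein on $\bbR^{2}$ in the form $\nrm{P_{k_{2}}\varphi^{2}}_{L^{\infty}_{x}}\aleq 2^{k_{2}}\nrm{P_{k_{2}}\varphi^{2}}_{L^{2}_{x}}$, one gets
\begin{equation*}
	\abs{\int P_{k_{0}}\varphi^{0}\,P_{k_{1}}\varphi^{1}\,P_{k_{2}}\varphi^{2}}\aleq A_{k_{0}}B_{k_{1}}C_{k_{2}}\;2^{\bt_{0}(k_{2}-k_{0})+\bt_{1}(k_{2}-k_{1})},
\end{equation*}
where $A_{k}:=2^{\bt_{0}k}\nrm{P_{k}\varphi^{0}}_{L^{2}_{x}}$, $B_{k}:=2^{\bt_{1}k}\nrm{P_{k}\varphi^{1}}_{L^{2}_{x}}$, $C_{k}:=2^{\bt_{2}k}\nrm{P_{k}\varphi^{2}}_{L^{2}_{x}}$ have $\ell^{2}$-norms equal to the relevant $\dot{H}$-norms, and I used $1-\bt_{2}=\bt_{0}+\bt_{1}$. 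Writing $m:=k_{0}-k_{2}\geq-O(1)$ (hence $k_{1}-k_{2}=m+O(1)$), the dyadic weight is $\aeq 2^{-(\bt_{0}+\bt_{1})m}=2^{-(1-\bt_{2})m}$, which is summable over $m\geq-O(1)$ \emph{exactly because} $\bt_{2}<1$; and for each fixed $m$ the residual near-diagonal sum $\sum_{k_{2}}A_{k_{2}+m}B_{k_{2}+m+O(1)}C_{k_{2}}$ is at most $\nrm{A}_{\ell^{2}}\nrm{B}_{\ell^{2}}\nrm{C}_{\ell^{2}}$ by Cauchy--Schwarz, bounding the middle factor via $\ell^{\infty}\leq\ell^{2}$. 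This yields the trilinear estimate and hence \eqref{eq:SobProd}; the inhomogeneous version follows by rerunning the argument with $\brk{\nb}$-adapted Littlewood--Paley projections (the lowest frequency then being $\aeq 1$, which only helps), or by splitting off low frequencies and treating them by H\"older and Sobolev embedding.

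The one point that requires genuine care is this dyadic bookkeeping: organizing the sum so the geometric series in the frequency gap converges using nothing beyond $\bt_{i}<1$, and in particular handling the case in which one $\bt_{i}<0$, where that factor lives in a space of \emph{negative} regularity and no direct $L^{p}_{x}$ embedding is available. Once the high-high/high-low geometry is in place, the remaining ingredients are just H\"older, Bernstein, Plancherel and Cauchy--Schwarz.
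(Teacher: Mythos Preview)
The paper does not actually prove this lemma: it simply refers the reader to \cite{DAncona:2012ke}. Your proposal, by contrast, supplies a complete self-contained argument via duality to a symmetric trilinear form followed by Littlewood--Paley trichotomy, placing the two high-frequency factors in $L^{2}_{x}$ and the low-frequency factor in $L^{\infty}_{x}$ via Bernstein (Lemma~\ref{lem:bernstein}). This is a standard and correct route; the key observation --- that summability of the geometric series in the frequency gap $m$ requires precisely $\bt_{i}<1$ for whichever index $i$ sits at low frequency --- neatly explains why the hypothesis $\max\set{\bt_{0},\bt_{1},\bt_{2}}<1$ is exactly the right one, and the argument handles the case of a negative exponent without any extra work. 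One small phrasing quibble: when you write ``by symmetry of the trilinear form, I may assume the lowest frequency is carried by $\varphi^{2}$,'' the trilinear integral is indeed symmetric but the norms on the right-hand side are not, so what you really mean is that the \emph{argument} for each of the three low-frequency cases is identical up to relabeling, each one consuming the corresponding hypothesis $\bt_{i}<1$. With that understood, the proof goes through.
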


\subsection{Littlewood-Paley theory}
In this paper, we will employ Littlewood-Paley theory as a basic tool to analyze multilinear expressions.
Let $\chi(\xi)$ be a smooth bump function which equals 1 on $\set{\xi: \abs{\xi} \leq 2}$ and supported in $\set{\xi: \abs{\xi} \leq 4}$. 
Given $k \in \bbZ$, define $\chi_{k}(\xi) := \chi(\xi/2^{k})$. Given a tempered distribution $\varphi \in \calS_{x}'$, we define its $k$-th Littlewood-Paley projection $P_{k}$ by
\begin{equation*}
	\widehat{P_{k} \varphi}(\xi) := (\chi_{k}(\xi) - \chi_{k-1}(\xi)) \widehat{\varphi}(\xi).
\end{equation*}

Define also $P_{\leq k} := \sum_{j \leq k} P_{j}$. We will often use the shorthand $\varphi_{k} := P_{k} \varphi$ and $\varphi_{\leq k} := P_{\leq k} \varphi$.

Below is a list of some basic properties of $P_{k}$. For a proof, we refer to \cite[Appendix A]{MR2233925}.
\begin{lemma} 
 For every $k \in \bbZ$ and $\varphi \in \calS_{x}$, the following statements hold.
\begin{enumerate}
\item ($L^{p}$ boundedness) For $1 \leq p \leq \infty$, we have $\nrm{P_{\leq k} \varphi}_{L^{p}_{x}} \aleq \nrm{\varphi}_{L^{p}_{x}}$, $\nrm{P_{k} \varphi}_{L^{p}_{x}} \aleq \nrm{\varphi}_{L^{p}_{x}}$.
\item (Finite band property) For $1 \leq p \leq \infty$, we have 
\begin{equation*}
\nrm{\abs{\nb} P_{k}\varphi}_{L^{p}_{x}} \aeq 2^{k} \nrm{P_{k} \varphi}_{L^{p}_{x}} \hbox{ and }
\nrm{\brk{\nb} P_{k}\varphi}_{L^{p}_{x}} \aeq 2^{k^{+}} \nrm{P_{k} \varphi}_{L^{p}_{x}}
\end{equation*}\item (Littlewood-Paley square function estimate) For $1 < p < \infty$, we have 
\begin{equation*}
\nrm{(\sum_{k \in \bbZ} \abs{P_{k} \varphi}^{2} )^{1/2}}_{L^{p}_{x}} \aeq \nrm{\varphi}_{L^{p}_{x}}.
\end{equation*}
\end{enumerate}
\end{lemma}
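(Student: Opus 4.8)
The plan is to reduce all three statements to a single principle: a Fourier multiplier whose symbol is a dyadic rescaling $m(\xi/2^{k})$ of a fixed function $m$ with $\check{m} \in L^{1}_{x}$ is convolution with $2^{2k}\check{m}(2^{k}\cdot)$, and since $\nrm{2^{2k}\check{m}(2^{k}\cdot)}_{L^{1}_{x}} = \nrm{\check{m}}_{L^{1}_{x}}$ independently of $k$, Young's inequality makes it bounded on $L^{p}_{x}$ for every $1 \le p \le \infty$ with constant uniform in $k$. For part (1), $P_{k}$ is exactly such a multiplier with $m := \chi - \chi_{-1}$ (a bump supported in an annulus) and $P_{\le k}$ with $m := \chi$, and both inverse transforms lie in $\calS_{x}$, so the principle applies directly.

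For part (2) I would first introduce a fattened projection $\tld{P}_{k}$, convolution with $2^{2k}\check{\tld\chi}(2^{k}\cdot)$ for a bump $\tld\chi$ equal to $1$ on $\mathrm{supp}(\chi - \chi_{-1})$ and supported in a slightly larger annulus, so that $\tld{P}_{k}P_{k} = P_{k}$. Writing $\abs{\nb}P_{k}\varphi = (\abs{\nb}\tld{P}_{k})(P_{k}\varphi)$, the multiplier of $\abs{\nb}\tld{P}_{k}$ is $\abs{\xi}\tld\chi(\xi/2^{k}) = 2^{k}\,n(\xi/2^{k})$ with $n(\eta) := \abs{\eta}\tld\chi(\eta)$ a fixed smooth compactly supported function (the non-smoothness of $\abs{\eta}$ at the origin is harmless since $\tld\chi$ vanishes there), so the principle above gives $\nrm{\abs{\nb}\tld{P}_{k}\psi}_{L^{p}_{x}} \aleq 2^{k}\nrm{\psi}_{L^{p}_{x}}$; taking $\psi = P_{k}\varphi$ gives one inequality, and the reverse follows symmetrically from $P_{k}\varphi = (\abs{\nb}^{-1}\tld{P}_{k})(\abs{\nb}P_{k}\varphi)$, since $\abs{\xi}^{-1}\tld\chi(\xi/2^{k})$ is $2^{-k}$ times a rescaled fixed smooth compactly supported symbol. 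For $\brk{\nb}$ one splits $k \ge 0$ and $k < 0$: in both cases $\brk{\xi} \aeq 2^{k^{+}}$ on $\mathrm{supp}(\chi_{k} - \chi_{k-1})$, and writing $\brk{\xi}\tld\chi(\xi/2^{k}) = 2^{k^{+}}\big(2^{-k^{+}}\brk{\xi}\big)\tld\chi(\xi/2^{k})$ with $2^{-k^{+}}\brk{\xi}$ comparable to $1$ and smooth on the support with all seminorms uniform in $k$, the same argument produces the factor $2^{k^{+}}$.

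For part (3) I would invoke the vector-valued Mikhlin--H\"ormander multiplier theorem: the operator $\varphi \mapsto (P_{k}\varphi)_{k \in \bbZ}$ from $L^{p}_{x}$ into $L^{p}_{x}(\ell^{2}(\bbZ))$ has $\ell^{2}$-valued symbol $\big(\chi_{k}(\xi) - \chi_{k-1}(\xi)\big)_{k}$, whose Mikhlin bounds are easily checked (only boundedly many $k$ contribute at each $\xi$, and the derivative bounds rescale correctly), hence it is bounded for $1 < p < \infty$, which is the forward inequality. Equivalently, and more self-containedly, one randomizes: for signs $\eps = (\eps_{k})$ the scalar multiplier $\sum_{k}\eps_{k}P_{k}$ satisfies Mikhlin bounds uniform in $\eps$, hence is bounded on $L^{p}_{x}$ uniformly in $\eps$, and averaging over $\eps$ together with Khintchine's inequality converts $\nrm{\sum_{k}\eps_{k}P_{k}\varphi}_{L^{p}_{x}}$ into the square function norm. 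The reverse inequality follows by duality: write $\varphi = \sum_{k}P_{k}\tld{P}_{k}\varphi$, pair against a test function, apply Cauchy--Schwarz in $k$, and invoke the forward inequality for the conjugate exponent $p'$. The main obstacle is entirely concentrated in part (3): parts (1) and (2) are soft, resting only on Young's inequality and dyadic scaling, whereas (3) requires Calder\'on--Zygmund theory via the (vector-valued) Mikhlin--H\"ormander theorem, which is also where the hypothesis $1 < p < \infty$ is genuinely used. The only bookkeeping nuisance elsewhere is the $k \ge 0$ versus $k < 0$ case split needed to handle the inhomogeneous weight $\brk{\nb}$.
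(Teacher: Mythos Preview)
Your argument is correct and standard. The paper does not actually prove this lemma: it simply cites \cite[Appendix A]{MR2233925} (Tao's \emph{Nonlinear dispersive equations}), where essentially the same Young's-inequality-plus-scaling argument for (1)--(2) and the Mikhlin/Khintchine route for (3) are carried out.
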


Given a product $\varphi^{1} \varphi^{2}$, consider the decomposition
\begin{equation*}
\varphi^{1} \varphi^{2} = \sum_{(k_{1}, k_{2}, k_{3}) \in \bbZ^{3}} P_{k_{0}}(\varphi^{1}_{k_{1}} \varphi^{2}_{k_{2}}).
\end{equation*}

We call $k_{0}$, $k_{1}$ and $k_{2}$ the frequency of the output, first input and second input, respectively. 
It turns out that not all combinations of $(k_{0}, k_{1}, k_{2})$ give rise to a non-zero summand. Define
\begin{align*}
\mathrm{LH} :=& \set{(k_{0}, k_{1}, k_{2}) \in \bbZ^{3} : k_{1} \leq k_{2}+5, \abs{k_{0} - k_{2}} \leq 5} \\
\mathrm{HL} :=& \set{(k_{0}, k_{1}, k_{2}) \in \bbZ^{3} : k_{2} \leq k_{1}+5, \abs{k_{0} - k_{1}} \leq 5} \\
\mathrm{HH} :=& \set{(k_{0}, k_{1}, k_{2}) \in \bbZ^{3} : k_{0} \leq \min \set{k_{1}, k_{2}} - 5, \abs{k_{1} - k_{2}} \leq 5} 
\end{align*}

Then by Fourier support properties, it is easy to see that
\begin{equation*}
	\sum_{(k_{1}, k_{2}, k_{3}) \in \bbZ^{3}} P_{k_{0}}(\varphi^{1}_{k_{1}} \varphi^{2}_{k_{2}}) 
	= \sum_{(k_{1}, k_{2}, k_{3}) \in \mathrm{LH} \,  \cup \mathrm{HL} \, \cup \, \mathrm{HH} } P_{k_{0}}(\varphi^{1}_{k_{1}} \varphi^{2}_{k_{2}}).
\end{equation*}

Analyzing a product with respect to the (slightly overlapping) sums over $\mathrm{LH}$, $\mathrm{HL}$ and $\mathrm{HH}$ is usually referred to as \emph{Littlewood-Paley trichotomy}. The following lemma, whose easy proof we skip, is often useful for such an analysis.
\begin{lemma}[Simple convolution bound] \label{lem:simpleConv}
Let $(b_{k})_{k \in \bbZ}$ be a non-negative sequence, and $a \geq 0$ an integer. Then for any $1 \leq p \leq \infty$, we have
\begin{equation} \label{eq:simpleConv}
\nrm{\sum_{j : \abs{j - k} \leq a} b_{j}}_{\ell^{p}_{k}(\bbZ)} \aleq_{a} \nrm{b_{j}}_{\ell^{p}_{j}(\bbZ)}
\end{equation}
\end{lemma}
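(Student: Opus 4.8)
The plan is to recognize the operator $(b_k)_{k\in\bbZ} \mapsto \big(\sum_{j:\abs{j-k}\le a} b_j\big)_{k\in\bbZ}$ as discrete convolution against the finite sequence $c_l := \mathbf{1}_{\set{\abs{l}\le a}}$ and to invoke Young's convolution inequality on $\ell^p(\bbZ)$; since $\nrm{(c_l)}_{\ell^1(\bbZ)} = 2a+1$, this immediately yields \eqref{eq:simpleConv} with implicit constant $2a+1 \aleq_a 1$. Since this is elementary, I would instead present the self-contained argument that avoids even citing Young's inequality. The first step is to reindex the inner sum: setting $l = j-k$, one has $\sum_{j:\abs{j-k}\le a} b_j = \sum_{\abs{l}\le a} b_{k+l}$, a sum of at most $2a+1$ terms.

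Next I would apply the triangle inequality for the $\ell^p_k(\bbZ)$ norm (i.e.\ Minkowski's inequality in the variable $k$, valid for every $1 \le p \le \infty$ including the endpoint), which gives
\begin{equation*}
	\nrm{\sum_{\abs{l}\le a} b_{k+l}}_{\ell^p_k(\bbZ)} \le \sum_{\abs{l}\le a} \nrm{(b_{k+l})_{k\in\bbZ}}_{\ell^p_k(\bbZ)}.
\end{equation*}
The $\ell^p(\bbZ)$ norm is invariant under the shift $k \mapsto k+l$, so each summand on the right-hand side equals $\nrm{(b_k)_{k\in\bbZ}}_{\ell^p(\bbZ)} = \nrm{b_j}_{\ell^p_j(\bbZ)}$; as there are at most $2a+1$ of them, this proves \eqref{eq:simpleConv} with constant $2a+1$, which is $\aleq_a 1$.

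There is no real obstacle here. Two minor points are worth recording: the argument does not use the non-negativity hypothesis on $(b_k)$ at all (it is stated in that form only because the lemma will be applied to sequences of Littlewood-Paley norms), and the triangle inequality employed is Minkowski's inequality in $k$, which does persist at $p=\infty$. For readers who prefer to avoid Minkowski, in the range $1 \le p < \infty$ one can argue even more crudely by power-mean inequality, $\big(\sum_{\abs{l}\le a} b_{k+l}\big)^p \le (2a+1)^{p-1}\sum_{\abs{l}\le a} b_{k+l}^p$, then sum over $k$ and use shift invariance to obtain the bound with constant $(2a+1)^{(p-1)/p} \le 2a+1$, the case $p = \infty$ being trivial.
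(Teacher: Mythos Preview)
Your proof is correct; the paper itself skips the proof of this lemma entirely (``whose easy proof we skip''), and your argument via reindexing plus Minkowski and shift-invariance (equivalently, Young's inequality with the $\ell^1$ kernel $\mathbf{1}_{\abs{l}\le a}$) is exactly the standard justification one would supply.
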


\subsection{Linear theory for wave equation}
Our main analytic ingredient is a variant of the Duhamel formula for the wave equation, which we dub the \emph{twisted Duhamel formula}. Roughly speaking, this procedure allows us to avoid the problematic term $\rd_{t} (A_{0} \phi)$ at the price of estimating $\abs{\nb} (A_{0} \phi)$.  Moreover, it allows us to estimate the covariant time derivative $\covD_{t} \phi$ directly, which is useful for estimating $A_{\mu}$ in the large data case. For a more detailed discussion, we refer the reader to \S \ref{subsec:Str4ellipticEq}.

The version we present below is stated in terms of smooth functions, to illustrate the main idea in a clear setting.
\begin{lemma} [Twisted Duhamel formula] \label{lem:tDuhamel}
Consider a finite time interval $I \subset \bbR$ with $0 \in I$. Let $\phi, A_{0}, F \in C^{\infty}_{t}(I; \calS_{x})$ satisfy the equation
\begin{equation} \label{eq:rnWave}
\Box \phi + i \rd_{t} (A_{0} \phi) = F.
\end{equation}
where $\covD_{t} := \rd_{t} - i A_{0}$. Then the following \emph{twisted Duhamel formula} holds:
\begin{equation} \label{eq:tDuhamel:1}
\begin{aligned}
	\phi(t,x) 
	= &\cos t \abs{\nb} \phi(0, x) + \frac{\sin t \abs{\nb}}{\abs{\nb}} \covD_{t} \phi(0, x) \\
	& - \int_{0}^{t} \frac{\sin (t-t') \abs{\nb}}{\abs{\nb}} F(t', x) \, \ud t' + i \int_{0}^{t} \cos (t-t') \abs{\nb} (A_{0} \phi)(t', x) \, \ud t'.
\end{aligned}
\end{equation}

For $\covD_{t} \phi$, the following formula holds:
\begin{equation} \label{eq:tDuhamel:2}
\begin{aligned}
	\covD_{t} \phi(t,x) 
	= &- \sin t \abs{\nb} ( \abs{\nb}  \phi)(0, x) + \cos t \abs{\nb} \covD_{t} \phi(0, x) \\
	& - \int_{0}^{t} \cos (t-t') \abs{\nb} F(t', x) \, \ud t' - i \int_{0}^{t}  \sin (t-t') \abs{\nb} \big( \abs{\nb} (A_{0} \phi) \big) (t', x) \, \ud t'.
\end{aligned}
\end{equation}
\end{lemma}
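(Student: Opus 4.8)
The plan is to derive \eqref{eq:tDuhamel:1} from the ordinary Duhamel formula for the d'Alembertian by a single integration by parts in the time variable, and then to obtain \eqref{eq:tDuhamel:2} by differentiating \eqref{eq:tDuhamel:1} in $t$. First I would recall the standard representation: if $\Box \psi = G$ on $I$ with $\psi, G \in C^{\infty}_{t}(I; \calS_{x})$ and $0 \in I$, then
\[
	\psi(t) = \cos t \abs{\nb}\, \psi(0) + \frac{\sin t \abs{\nb}}{\abs{\nb}}\, \rd_{t} \psi(0) - \int_{0}^{t} \frac{\sin (t-t') \abs{\nb}}{\abs{\nb}}\, G(t')\, \ud t'.
\]
The wave propagators $\cos t \abs{\nb}$ and $\frac{\sin t \abs{\nb}}{\abs{\nb}}$ have symbols $\cos(t \abs{\xi})$ and $\frac{\sin (t \abs{\xi})}{\abs{\xi}}$ (up to the normalization of the Fourier transform), which, being even real-analytic functions of $\abs{\xi}$ and hence power series in $\abs{\xi}^{2}$, are smooth functions of $\xi$ of at most polynomial growth; in particular $\frac{\sin t \abs{\nb}}{\abs{\nb}}$ is a genuine order $(-1)$ multiplier with no singularity at the frequency origin, and these operators map $\calS_{x}$ continuously into smooth temperate functions, so every differentiation under the integral sign performed below is justified. (Alternatively one can work entirely on the Fourier side, where \eqref{eq:rnWave} becomes, for each fixed $\xi$, a second-order linear ODE in $t$.) I would then apply the displayed formula with $G = F - i \rd_{t}(A_{0} \phi)$, which is \eqref{eq:rnWave}; note that $A_{0}\phi$, $\rd_{t}(A_{0}\phi)$, and hence $G$ lie in $C^{\infty}_{t}(I;\calS_{x})$, so the above representation and the uniqueness it relies on apply.

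The only resulting term not yet in the form of \eqref{eq:tDuhamel:1} is $i \int_{0}^{t} \frac{\sin (t-t') \abs{\nb}}{\abs{\nb}}\, \rd_{t'}(A_{0}\phi)(t')\, \ud t'$, and I would integrate it by parts in $t'$. Using $\rd_{t'}\big[\tfrac{\sin (t-t') \abs{\nb}}{\abs{\nb}}\big] = -\cos (t-t') \abs{\nb}$, the boundary term at $t'=t$ vanishes since $\sin 0 = 0$, while the boundary term at $t'=0$ equals $-i\,\tfrac{\sin t \abs{\nb}}{\abs{\nb}}(A_{0}\phi)(0)$. Adding the latter to the initial-velocity term $\tfrac{\sin t \abs{\nb}}{\abs{\nb}}\rd_{t}\phi(0)$ from the homogeneous part produces precisely $\tfrac{\sin t \abs{\nb}}{\abs{\nb}}\covD_{t}\phi(0)$, since $\covD_{t}\phi = \rd_{t}\phi - iA_{0}\phi$, and the remaining integral is $+i\int_{0}^{t}\cos (t-t')\abs{\nb}\,(A_{0}\phi)(t')\,\ud t'$. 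This is exactly \eqref{eq:tDuhamel:1}. (Equivalently, one may simply \emph{define} the right-hand side of \eqref{eq:tDuhamel:1} and verify directly that it solves $\Box(\cdot) = F - i\rd_{t}(A_{0}\phi)$ with Cauchy data $(\phi(0),\rd_{t}\phi(0))$ at $t = 0$, then invoke uniqueness for the linear wave equation; the integration-by-parts computation is the constructive version of this.)

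For \eqref{eq:tDuhamel:2}, I would differentiate \eqref{eq:tDuhamel:1} in $t$, using the Leibniz rule on the two time integrals together with $\rd_{t}[\cos t \abs{\nb}] = -\abs{\nb}\sin t \abs{\nb}$ and $\rd_{t}\big[\tfrac{\sin t \abs{\nb}}{\abs{\nb}}\big] = \cos t \abs{\nb}$. Differentiating the last integral $i\int_{0}^{t}\cos (t-t')\abs{\nb}\,(A_{0}\phi)\,\ud t'$ produces the boundary contribution $i\,(A_{0}\phi)(t)$ together with $-i\int_{0}^{t}\sin (t-t')\abs{\nb}\,\big(\abs{\nb}(A_{0}\phi)\big)\,\ud t'$. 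The key bookkeeping point is that the boundary term $i\,(A_{0}\phi)(t)$ is exactly what cancels the $-iA_{0}\phi$ when one passes from $\rd_{t}\phi$ to $\covD_{t}\phi = \rd_{t}\phi - iA_{0}\phi$; this is the structural reason the \emph{covariant} time derivative, rather than $\rd_{t}\phi$, appears naturally on the left-hand side of \eqref{eq:tDuhamel:2}. Collecting the surviving terms yields \eqref{eq:tDuhamel:2}.

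The argument is elementary and I do not anticipate a substantial obstacle; the only points requiring care are the sign bookkeeping in the integration by parts and in the Leibniz differentiation, and the (routine) observation that $\tfrac{\sin t \abs{\nb}}{\abs{\nb}}$ and $\cos t \abs{\nb}$ are well-defined Fourier multipliers on $\calS_{x}$-valued functions, which is what makes these manipulations legitimate.
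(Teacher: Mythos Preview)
Your proposal is correct and follows essentially the same approach as the paper: start from the standard Duhamel formula, integrate the $i\int_{0}^{t}\frac{\sin(t-t')\abs{\nb}}{\abs{\nb}}\,\rd_{t'}(A_{0}\phi)\,\ud t'$ term by parts in $t'$ so that the boundary contribution converts $\rd_{t}\phi(0)$ into $\covD_{t}\phi(0)$, and then differentiate \eqref{eq:tDuhamel:1} in $t$, using the boundary term $i(A_{0}\phi)(t)$ to pass from $\rd_{t}\phi$ to $\covD_{t}\phi$ on the left of \eqref{eq:tDuhamel:2}. Your additional remarks on the well-definedness of the multipliers and the alternative ``define the right-hand side and verify'' route are fine but not needed beyond the paper's observation that the manipulations are easily justified for smooth objects.
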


%
%
%

\begin{proof} 
We begin with the standard Duhamel formula for the d'Alembertian:
\begin{align*}
	\phi(t,x) 
	= &\cos t \abs{\nb} \phi(0, x) + \frac{\sin t \abs{\nb}}{\abs{\nb}} \rd_{t} \phi(0, x) \\
	& - \int_{0}^{t} \frac{\sin (t-t') \abs{\nb}}{\abs{\nb}} F(t', x) \, \ud t' + i \int_{0}^{t} \frac{\sin (t-t') \abs{\nb}}{\abs{\nb}} \rd_{t'}(A_{0} \phi)(t', x) \, \ud t'
\end{align*}

Integrating the last term by parts (which is easily justified for smooth objects),
\begin{align*}
 i \int_{0}^{t} \frac{\sin (t-t') \abs{\nb}}{\abs{\nb}} \rd_{t'}(A_{0} \phi)(t',x) \, \ud t' 
= &  - i \frac{\sin t \abs{\nb}}{\abs{\nb}} A_{0} \phi (0, x) + i \int_{0}^{t} \cos(t - t') \abs{\nb} (A_{0} \phi)(t', x) \, \ud t'.
\end{align*}

Recalling that $\rd_{t} - i A_{0} = \covD_{t}$, we obtain \eqref{eq:tDuhamel:1}. To proceed, take $\rd_{t}$ of both sides. From
\begin{equation*}
i \rd_{t} \int_{0}^{t} \cos (t-t') \abs{\nb} (A_{0} \phi)(t', x) \, \ud t' = i A_{0} \phi(t,x) - i \int_{0}^{t}  \sin (t-t') \abs{\nb} \big( \abs{\nb} (A_{0} \phi) \big) (t', x) \, \ud t',
\end{equation*}
the desired identity \eqref{eq:tDuhamel:2} follows.
\end{proof}

\begin{remark} 
The benefits of the twisted Duhamel formula do not come without a price; indeed, we need a good control of $A_{0}$ in order to be able to apply this in a Picard iteration setting. Luckily, this is affordable for our system \eqref{eq:CSH-C} thanks to the special structure of the elliptic equation for $A_{0}$. See Lemma \ref{lem:nullStr4A0} and Proposition \ref{prop:tDuhamelEnergy} for more details.
\end{remark}

As a technical tool, we need a well-known refinement of the Strichartz estimate due to Klainerman-Tataru \cite{Klainerman:1999do}. To state the estimate in the form we use, we shall make some definitions. 

Given $\ell, k \in \bbZ$ with $\ell < k$, define $\calC_{\ell, k}$ to be a finitely overlapping cover\footnote{We may arrange so that the number of overlaps is uniformly bounded in $\ell, k$.} of $\set{\xi \in \bbR^{2}: 2^{k-2} \leq \abs{\xi} \leq 2^{k+2}}$ by balls of radius $2^{\ell}$. Note that the number of such balls is $\aleq 2^{2(k-\ell)}$. Let $\set{\chi_{c}}_{c \in \calC_{\ell, k}}$ be a smooth partition of unity, where each $\chi_{c}$ is supported in $c \in \calC_{\ell, k}$. 
Then define $P_{c}$ to be the Fourier multiplier with symbol $\chi_{c}$. 
Via Plancherel, it is easy to verify that $\set{P_{c}}_{c \in \calC_{\ell, k}}$ is almost orthogonal in $L^{2}_{x}$, i.e., 
\begin{equation}
\sum_{c \in \calC_{\ell, k}} \nrm{P_{c} \varphi_{k}}^{2}_{L^{2}_{x}} 
= \nrm{(\sum_{c \in \calC_{\ell, k}} \abs{P_{c} \varphi_{k}}^{2} )^{1/2} }_{L^{2}_{x}}^{2}
\aeq \nrm{\varphi_{k}}_{L^{2}_{x}}^{2}
\end{equation}

As a convention, we set $\calC_{k, k}$ to be the singleton $\set{c_{k}}$ where $\chi_{c_{k}}$ is simply equal to 1 on $\set{\xi \in \bbR^{2}: 2^{k-2} \leq \abs{\xi} \leq 2^{k+2}}$.

We are now ready state the Klainerman-Tataru refinement of Strichartz inequality.

\begin{lemma}[Klainerman-Tataru refinement of Strichartz] \label{lem:KTStr}
Consider $\ell, k \in \bbZ$ and $q, r \in \bbR$ such that $\ell \leq k$ and $(q,r)$ is Strichartz-admissible, i.e.,
\begin{equation} \label{eq:StrAdmissible}
	2 \leq q, r \leq \infty, \quad
	\frac{2}{q} \leq \frac{1}{2} - \frac{1}{r}.
\end{equation}

Then for any sign $\pm$ and $f \in \calS_{x}$, we have
\begin{align} 
	\nrm{(\sum_{c \in \calC_{\ell, k}} \abs{P_{c} (e^{\pm t \abs{\nb}} f_{k} )}^{2} )^{1/2} }_{L^{q}_{t} L^{r}_{x}} 
	\aleq 2^{(1 -\frac{2}{q} - \frac{2}{r}) (\ell- k)} 2^{(1 - \frac{1}{q} - \frac{2}{r})k} \nrm{f_{k}}_{L^{2}_{x}} \label{eq:KTStr} 
\end{align}
\end{lemma}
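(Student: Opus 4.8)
\emph{Step 1: reduction to a single cap.} The overall plan is to pass to a single frequency cap by almost-orthogonality, and there to split into endpoint cases by interpolation. Since $q,r\ge 2$, I would first use Minkowski's inequality in $L^{q/2}_t L^{r/2}_x$ to pull the $\ell^2$-sum outside:
\begin{equation*}
\nrm{\big(\sum_{c\in\calC_{\ell,k}}\abs{P_c(e^{\pm t\abs{\nb}}f_k)}^2\big)^{1/2}}_{L^q_t L^r_x}^2\le\sum_{c\in\calC_{\ell,k}}\nrm{e^{\pm t\abs{\nb}}(P_c f_k)}_{L^q_t L^r_x}^2,
\end{equation*}
using that each $P_c$ commutes with $e^{\pm t\abs{\nb}}$. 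Combined with the almost-orthogonality $\sum_{c}\nrm{P_c f_k}_{L^2_x}^2\aeq\nrm{f_k}_{L^2_x}^2$ established just above the lemma, it then suffices to prove the single-cap bound
\begin{equation*}
\nrm{e^{\pm t\abs{\nb}}g}_{L^q_t L^r_x}\aleq 2^{(1-\frac{2}{q}-\frac{2}{r})(\ell-k)}\,2^{(1-\frac{1}{q}-\frac{2}{r})k}\,\nrm{g}_{L^2_x}
\end{equation*}
for $g\in\calS_x$ whose Fourier transform is supported in a ball of radius $2^\ell$ inside $\set{2^{k-2}\le\abs{\xi}\le 2^{k+2}}$.

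\emph{Step 2: interpolation to three vertices.} In the coordinates $(1/q,1/r)$ the admissibility condition \eqref{eq:StrAdmissible} describes the closed triangle with vertices $(0,0)$, $(1/4,0)$, $(0,1/2)$, i.e. $(q,r)\in\set{(\infty,\infty),\,(4,\infty),\,(\infty,2)}$, and the exponent $(1-\frac{2}{q}-\frac{2}{r})(\ell-k)+(1-\frac{1}{q}-\frac{2}{r})k$ equals $\ell-\frac{2\ell-k}{q}-\frac{2\ell}{r}$, which is affine in $(1/q,1/r)$. Hence two successive complex interpolations of the mixed-norm spaces $L^q_t L^r_x$, applied to the map $g\mapsto e^{\pm t\abs{\nb}}g$, reduce the single-cap bound to its three vertex cases. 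At $(\infty,2)$ it is the unitarity of $e^{\pm t\abs{\nb}}$ on $L^2_x$. At $(\infty,\infty)$ the right-hand side is $\aeq 2^\ell\nrm{g}_{L^2_x}$, and Fourier inversion followed by Cauchy--Schwarz gives $\nrm{e^{\pm t\abs{\nb}}g}_{L^\infty_{t,x}}\le\nrm{\wht g}_{L^1}\aleq 2^\ell\nrm{g}_{L^2_x}$, since $\wht g$ is supported on a set of measure $\aeq 2^{2\ell}$. Only the vertex $(4,\infty)$ has oscillatory content.

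\emph{Step 3: the $(4,\infty)$ vertex.} Here the target is $\nrm{e^{\pm t\abs{\nb}}g}_{L^4_t L^\infty_x}\aleq 2^{\ell/2}2^{k/4}\nrm{g}_{L^2_x}$. I would run a $TT^{*}$ argument (no subtlety arises, since $L^4_t L^\infty_x$ is the dual of $L^{4/3}_t L^1_x$), reducing to the bound $\nrm{\int e^{\pm(t-s)\abs{\nb}}\tld{P}_{c}F(s)\,\ud s}_{L^4_t L^\infty_x}\aleq 2^\ell 2^{k/2}\nrm{F}_{L^{4/3}_t L^1_x}$, where $\tld{P}_{c}$ is a slightly fattened Fourier cutoff to a cap of comparable dimensions; by Hardy--Littlewood--Sobolev (the exponents satisfy $\frac{3}{4}=\frac{1}{4}+\frac{1}{2}$, a non-endpoint case) this in turn follows from the dispersive bound $\nrm{e^{\pm\tau\abs{\nb}}\tld{P}_{c}}_{L^1_x\to L^\infty_x}\aleq 2^\ell 2^{k/2}\abs{\tau}^{-1/2}$. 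To get the latter I would write the kernel as an oscillatory integral whose phase is linear in $x$ and of the form $\mp\tau\abs{\xi}$ in $\xi$, parametrize $\xi$ near the cap center $\xi_c$ by its radial and tangential coordinates $(u,v)$ with $\abs{u},\abs{v}\aleq 2^\ell$, and expand $\abs{\xi}$ about $\xi_c$: the Hessian is $\abs{\xi_c}^{-1}\Pi^{\perp}_{\omg_c}$, non-degenerate only in the tangential direction $v$ with size $\aeq 2^{-k}$, while the cubic and higher terms are genuinely lower order over the cap because $2^{\ell-k}\le 1$. Integrating trivially in $u$ (factor $2^\ell$) and by van der Corput's second-derivative estimate in $v$ (factor $\min\set{2^\ell,(2^{-k}\abs{\tau})^{-1/2}}\le 2^{k/2}\abs{\tau}^{-1/2}$) then yields the claim.

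\emph{The main obstacle.} Steps 1 and 2 are routine bookkeeping and interpolation; the crux is the cap dispersive bound inside Step 3. The delicate point is that the phase $\abs{\xi}$ has a rank-one Hessian --- the light cone is flat in the radial direction --- so only a single power $\abs{\tau}^{-1/2}$ of decay can be extracted, and one must verify that the cubic remainder of $\abs{\xi}$ over a cap of angular width $2^{\ell-k}$ does not overwhelm the quadratic term driving van der Corput. This is precisely where the hypothesis $\ell\le k$ is used, the borderline $\ell=k$ reducing to the classical frequency-localized dispersive estimate for the two-dimensional wave equation.
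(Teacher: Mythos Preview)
The paper does not actually prove this lemma; it refers the reader to \cite[Appendix A]{Klainerman:1999do}. Your argument is correct and follows what is essentially the standard route to the Klainerman--Tataru refinement: reduce to a single cap by Minkowski (using $q,r\geq 2$) and $L^2$ almost-orthogonality, interpolate to the three vertices of the admissible triangle, and at the only nontrivial vertex $(q,r)=(4,\infty)$ run $TT^{*}$ together with Hardy--Littlewood--Sobolev, feeding in the cap-localized dispersive bound obtained by van der Corput in the tangential direction. Your identification of the main obstacle is accurate: the Hessian of $\abs{\xi}$ has rank one, the nondegenerate tangential eigenvalue has size $\sim 2^{-k}$, and the hypothesis $\ell\leq k$ is exactly what keeps the cubic remainder from spoiling the second-derivative lower bound over the cap.

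One minor remark on Step~3: alongside the van der Corput bound it is worth recording the trivial kernel bound $\nrm{e^{\pm\tau\abs{\nb}}\tld P_{c}}_{L^{1}_{x}\to L^{\infty}_{x}}\aleq 2^{2\ell}$, which is what actually holds for $\abs{\tau}\aleq 2^{k-2\ell}$. In that range $2^{2\ell}\leq 2^{\ell}2^{k/2}\abs{\tau}^{-1/2}$, so your stated dispersive bound $\aleq 2^{\ell}2^{k/2}\abs{\tau}^{-1/2}$ does hold for all $\tau\neq 0$ and Hardy--Littlewood--Sobolev applies uniformly, but making this explicit removes any doubt about the small-$\abs{\tau}$ regime.
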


For a proof, we refer to \cite[Appendix A]{Klainerman:1999do}. Note that, by our convention, the case $\ell = k$ corresponds to the usual Strichartz estimate. 

\begin{remark} 
We remark that the Klainerman-Tataru refinement of the Strichartz inequality was also used in \cite{Bournaveas:2013vk} for the Chern-Simons-Dirac system, in order to handle certain high-high interactions when inverting elliptic equations for $A_{\mu}$.
\end{remark}

\subsection{Function space for wave equation} \label{subsec:ftnSpace}
Let $I \subset \bbR$ be a finite interval. For $\varphi \in C^{\infty}_{t}(I; \calS_{x})$, we define
\begin{equation} \label{eq:Def4S}
\nrm{\varphi}^{2}_{S^{0}_{k}(I)} := \nrm{\varphi}^{2}_{L^{\infty}_{t} (I;L^{2}_{x})} + \sup_{\ell \leq k} \, 2^{-(\ell- k)} 2^{-(3/2) k}  \nrm{( \sum_{c \in \calC_{\ell, k}} \abs{P_{c} \varphi}^{2} )^{1/2}}^{2}_{L^{4}_{t} (I; L^{\infty}_{x})}.
\end{equation}

The $S^{0}_{k}$ norm will be the basic dyadic building block for the space in which we will prove LWP of \eqref{eq:CSH-C}. It controls all the Klainerman-Tataru-Strichartz-type norms, as the following lemma shows.
\begin{lemma} \label{lem:basicProp4Sk}
Let $I \subset \bbR$ be a finite interval. Then for every $\varphi \in C^{\infty}(I ; \calS_{x})$, we have
\begin{equation} \label{eq:KTStr4Sk}
	\sup_{(q,r) \in \mathrm{Str}} \sup_{\ell \leq k} \, 2^{-(1 -\frac{2}{q} - \frac{2}{r}) (\ell- k)} 2^{-(1-\frac{1}{q}-\frac{2}{r}) k}  \nrm{( \sum_{c \in \calC_{\ell, k}} \abs{P_{c} \varphi}^{2} )^{1/2}}_{L^{q}_{t} (I; L^{r}_{x})} \aleq \nrm{\varphi}_{S^{0}_{k}(I)}.
\end{equation}
where $\mathrm{Str}$ is the set of all Strichartz admissible pairs, i.e.,
\begin{equation*}
 \mathrm{Str} := \set{(q,r) \in \bbR^{2} : (q,r) \hbox{ satisfies } \eqref{eq:StrAdmissible}}.
\end{equation*}
\end{lemma}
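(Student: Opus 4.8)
The plan is to deduce every Klainerman--Tataru--Strichartz norm from the two ingredients already hard-wired into $\nrm{\cdot}_{S^{0}_{k}(I)}$: the energy norm $L^{\infty}_{t}(I;L^{2}_{x})$ and the square-function norm $L^{4}_{t}(I;L^{\infty}_{x})$. Write $G_{\ell} := (\sum_{c \in \calC_{\ell,k}} \abs{P_{c} \varphi}^{2})^{1/2}$ and $w(q,r) := 2^{-(1 - \frac{2}{q} - \frac{2}{r})(\ell - k)} 2^{-(1 - \frac{1}{q} - \frac{2}{r})k}$, so that the bracketed quantity in \eqref{eq:KTStr4Sk} is $w(q,r) \nrm{G_{\ell}}_{L^{q}_{t} L^{r}_{x}}$. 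The key structural observations are that $\log_{2} w(q,r)^{-1}$ is an \emph{affine} function of $(1/q,1/r)$, and that the admissible set $\mathrm{Str}$, viewed in the $(1/q,1/r)$-plane, is the closed triangle with vertices $(\infty,2)$, $(\infty,\infty)$, $(4,\infty)$. Hence it suffices to establish
\begin{equation*}
	\nrm{G_{\ell}}_{L^{q}_{t} L^{r}_{x}} \aleq w(q,r)^{-1} \nrm{\varphi}_{S^{0}_{k}(I)}
\end{equation*}
at these three vertices with a constant uniform in $\ell \leq k$, $k \in \bbZ$ and $I$; interpolation then propagates it, \emph{with the correct weight}, to every admissible pair, and the claim follows after taking $\sup_{(q,r)}\sup_{\ell}$.

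At $(\infty,2)$, Plancherel together with the bounded overlap of $\set{\chi_{c}}_{c \in \calC_{\ell,k}}$ gives $\nrm{G_{\ell}}_{L^{\infty}_{t} L^{2}_{x}} \aleq \nrm{\varphi}_{L^{\infty}_{t} L^{2}_{x}} \leq \nrm{\varphi}_{S^{0}_{k}(I)}$, and $w(\infty,2) = 1$. At $(\infty,\infty)$, Minkowski's inequality gives $\nrm{G_{\ell}}_{L^{\infty}_{x}} \leq (\sum_{c} \nrm{P_{c}\varphi}_{L^{\infty}_{x}}^{2})^{1/2}$; applying Bernstein (Lemma \ref{lem:bernstein}) to each $P_{c}\varphi$, whose Fourier support lies in a ball of radius $\aeq 2^{\ell}$, followed once more by $L^{2}$-orthogonality, yields $\nrm{G_{\ell}}_{L^{\infty}_{t} L^{\infty}_{x}} \aleq 2^{\ell} \nrm{\varphi}_{L^{\infty}_{t} L^{2}_{x}} = w(\infty,\infty)^{-1} \nrm{\varphi}_{S^{0}_{k}(I)}$. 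Finally, the vertex $(4,\infty)$ is, up to the weight $2^{\frac{1}{2}(\ell-k)} 2^{\frac{3}{4}k} = w(4,\infty)^{-1}$, exactly the second term of \eqref{eq:Def4S}, so it is immediate.

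For the interpolation I would argue directly rather than invoke vector-valued interpolation. Interpolating $L^{r}_{x}$ between $L^{2}_{x}$ and $L^{\infty}_{x}$ pointwise in $t$ and then applying H\"older in $t$ gives, for every \emph{sharp} pair $\frac{2}{q} + \frac{1}{r} = \frac{1}{2}$ (so $\frac{1}{r} = \frac{1-\theta}{2}$, $\frac{1}{q} = \frac{\theta}{4}$),
\begin{equation*}
	\nrm{G_{\ell}}_{L^{q}_{t} L^{r}_{x}} \leq \nrm{G_{\ell}}_{L^{\infty}_{t} L^{2}_{x}}^{1-\theta} \nrm{G_{\ell}}_{L^{4}_{t} L^{\infty}_{x}}^{\theta},
\end{equation*}
which together with the $(\infty,2)$ and $(4,\infty)$ bounds settles the whole sharp edge. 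The same Minkowski-plus-Bernstein computation as at $(\infty,\infty)$, but carrying a general $r \geq 2$, gives $\nrm{G_{\ell}}_{L^{\infty}_{t} L^{r}_{x}} \aleq 2^{(1-\frac{2}{r})\ell} \nrm{\varphi}_{L^{\infty}_{t} L^{2}_{x}} = w(\infty,r)^{-1} \nrm{\varphi}_{S^{0}_{k}(I)}$ along the whole edge $q = \infty$. For a general admissible pair with $q < \infty$ one has $q \geq q_{\ast} := \frac{4r}{r-2}$ with $(q_{\ast}, r)$ on the sharp edge; interpolating $L^{q}_{t}$ between $L^{\infty}_{t}$ and $L^{q_{\ast}}_{t}$ (log-convexity of $L^{p}_{t}$-norms) interpolates the $q=\infty$ estimate against the sharp-edge estimate, and since $\log_{2} w^{-1}$ is affine while $(1/q,1/r)$ is the corresponding convex combination of $(0,1/r)$ and $(1/q_{\ast},1/r)$, the output weight is exactly $w(q,r)^{-1}$.

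I do not anticipate a genuine obstacle: every step is orthogonality, Bernstein, or Lebesgue interpolation/H\"older, and nothing beyond the dispersive input already packaged into $S^{0}_{k}$ (via the Klainerman--Tataru estimate, Lemma \ref{lem:KTStr}) is needed. The only delicate point is bookkeeping the exponents so the interpolated weights coincide with $w(q,r)$, and verifying that all implicit constants are uniform in $\ell \leq k$, $k$, and the interval $I$ --- the last being automatic because H\"older in time on $I$ is scale-invariant and none of the other ingredients depends on $\abs{I}$. Packaging $\mathrm{Str}$ as the triangle with vertices $(\infty,2),(\infty,\infty),(4,\infty)$ and exploiting the affineness of $\log_{2} w^{-1}$ is what keeps this bookkeeping routine.
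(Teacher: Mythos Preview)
Your proposal is correct and follows the same strategy as the paper: establish the estimate at the three vertices $(\infty,2)$, $(\infty,\infty)$, $(4,\infty)$ of the Strichartz triangle (the last being built into the definition of $S^{0}_{k}$, the first two via Bernstein and $L^{2}$ almost orthogonality of the $P_{c}$'s), then interpolate. The paper's own proof is terser---it simply records the two $(q,r)=(\infty,2)$ and $(\infty,\infty)$ endpoint bounds and invokes interpolation without further comment---whereas you have spelled out the interpolation step explicitly via H\"older/log-convexity along the edges, but the substance is identical.
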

\begin{proof} 
 By interpolation, it suffices to prove the $L^{\infty}_{t} L^{2}_{x}$ and $L^{\infty}_{t,x}$ estimates, i.e.,
 \begin{equation*}
\sup_{\ell \leq k} \nrm{( \sum_{c \in \calC_{\ell, k}} \abs{P_{c} \varphi}^{2} )^{1/2}}_{L^{\infty}_{t} (I; L^{2}_{x})} \aleq \nrm{\varphi}_{S^{0}_{k}(I)}, \quad
\sup_{\ell \leq k} 2^{-\ell} \nrm{( \sum_{c \in \calC_{\ell, k}} \abs{P_{c} \varphi}^{2} )^{1/2}}_{L^{\infty}_{t} (I; L^{\infty}_{x})} \aleq \nrm{\varphi}_{S^{0}_{k}(I)}.
\end{equation*}

These estimates follow easily from Bernstein and $L^{2}$ almost orthogonality of $P_{c}$'s. \qedhere
\end{proof}


Given $\gmm \in \bbR$ and a finite interval $I \subset \bbR$, we define the inhomogeneous norm $S^{\gmm}(I)$ by
\begin{equation} \label{eq:Def4Sgmm}
	\nrm{\varphi}_{S^{\gmm}(I)} := \bb( \sum_{k} 2^{2 \gmm k^{+}} \nrm{\varphi_{k}}_{S^{0}_{k}(I)}^{2} \bb)^{1/2}
\end{equation}
and define the space $S^{\gmm}$ to be the completion of $C_{t}^{\infty} (I; \calS_{x})$ under this norm.

Some basic properties of the space $S^{\gmm}$ are in order.
\begin{lemma} \label{lem:basicProps4S}
Let $\gmm \in \bbR$ and $I \subset \bbR$ a finite interval. Then the following statements hold.
\begin{enumerate}
\item $S^{\gmm}(I)$ is a Banach space which imbeds into $C_{t} (I; H^{\gmm}_{x})$, i.e., $ S^{\gmm}(I) \subset C_{t} (I;H^{\gmm}_{x})$.
\item We have $S^{\gmm}(I) \subset L^{q}_{t} (I; L^{r}_{x})$ for $(q,r) \in \mathrm{Str}$ such that
\begin{equation*}
	1 - \frac{1}{q} - \frac{2}{r} \leq \gmm \quad \hbox{ and } \quad 
	(1 - \frac{1}{q} - \frac{2}{r}, r) \neq (\gmm, \infty).
\end{equation*}	
\item Let $0 < T \leq T_{0}$. Then for every $\varphi \in S^{\gmm}(-T_{0}, T_{0})$, $\nrm{\varphi}_{S^{\gmm}(-T, T)}$ is continuous in $T \in (0, T_{0}]$ and 
\begin{equation} \label{eq:contAt0ForS}
	\limsup_{T \to 0} \nrm{\varphi}_{S^{\gmm}(-T, T)} \aleq_{\gmm} \nrm{\varphi \vert_{t=0}}_{H^{\gmm}_{x}}.
\end{equation}
\end{enumerate}
\end{lemma}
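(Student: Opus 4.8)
The proof breaks into the three asserted statements; throughout we use that $C^\infty_t(I;\calS_x)$ is dense in $S^\gmm(I)$, so that bounds uniform in the relevant parameters need only be checked on smooth representatives.

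\pfstep{Banach property and embedding into $C_t H^\gmm_x$} Being a completion, $S^\gmm(I)$ is automatically a Banach space; only the embedding requires an argument. The $L^\infty_t(I;L^2_x)$ term in \eqref{eq:Def4S} gives $\nrm{\varphi_k}_{L^\infty_t(I;L^2_x)} \leq \nrm{\varphi_k}_{S^0_k(I)}$, so by the Littlewood--Paley square function estimate and \eqref{eq:Def4Sgmm} one gets $\nrm{\varphi}_{L^\infty_t(I;H^\gmm_x)} \aleq \nrm{\varphi}_{S^\gmm(I)}$, i.e. $S^\gmm(I) \hookrightarrow L^\infty_t(I;H^\gmm_x)$. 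Since $C_t(I;H^\gmm_x)$ is a closed subspace of $L^\infty_t(I;H^\gmm_x)$ (a uniform limit of continuous $H^\gmm_x$-valued functions is continuous) and contains the dense set $C^\infty_t(I;\calS_x)$, this yields $S^\gmm(I) \subset C_t(I;H^\gmm_x)$.

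\pfstep{Strichartz embedding} I apply Lemma \ref{lem:basicProp4Sk} with $\ell = k$, for which $\calC_{k,k} = \set{c_k}$ and $P_{c_k}\varphi_k = \varphi_k$, to obtain the dyadic bound $\nrm{\varphi_k}_{L^q_t(I;L^r_x)} \aleq 2^{(1-\frac1q-\frac2r)k}\nrm{\varphi_k}_{S^0_k(I)}$ for $(q,r) \in \mathrm{Str}$. When $r < \infty$ I sum in $k$ using the Littlewood--Paley square function estimate in $L^r_x$ together with Minkowski's inequality in $L^{r/2}_x$ and $L^{q/2}_t$ (legitimate since $q,r \geq 2$), which reduces matters to controlling $\sum_k 2^{2(1-\frac1q-\frac2r)k}\nrm{\varphi_k}^2_{S^0_k(I)}$ by $\nrm{\varphi}^2_{S^\gmm(I)}$: the inequality $1 - \frac1q - \frac2r \geq 0$, valid on $\mathrm{Str}$ since $r \geq 2$, handles the summands with $k < 0$, while the hypothesis $1 - \frac1q - \frac2r \leq \gmm$ handles those with $k \geq 0$. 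When $r = \infty$ the square function estimate is unavailable, so I instead use the triangle inequality in $k$ and Cauchy--Schwarz on each of the ranges $k < 0$ and $k \geq 0$; now the geometric series over $k \geq 0$ converges only under the \emph{strict} inequality $1 - \frac1q < \gmm$, which is precisely what the excluded endpoint $(1-\frac1q-\frac2r, r) \neq (\gmm,\infty)$ guarantees (combined with $1-\frac1q \leq \gmm$).

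\pfstep{Continuity in $T$ and the $\limsup$ bound} Fix $\varphi \in S^\gmm(-T_0,T_0)$; by density we may take $\varphi$ smooth. The map $T \mapsto \nrm{\varphi}_{S^\gmm(-T,T)}$ is monotone nondecreasing, and for each fixed $k$ and $\ell$ its building blocks — an $L^\infty_t(\,\cdot\,;L^2_x)$ norm and an $L^4_t(\,\cdot\,;L^\infty_x)$ norm over $(-T,T)$ — are continuous in $T \in (0,T_0]$, by uniform continuity of the smooth $\varphi$ and absolute continuity of the integral, respectively. The only real issue is that the supremum over $\ell \leq k$ inside $\nrm{\varphi_k}_{S^0_k}$ and the sum over $k$ in \eqref{eq:Def4Sgmm} need not commute with limits in $T$. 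I resolve this by truncation: a crude application of Bernstein's inequality (Lemma \ref{lem:bernstein}) to each $P_c\varphi_k$ and almost orthogonality of the $P_c$'s bound the $\ell$-th term of $\nrm{\varphi_k}^2_{S^0_k(-T,T)}$ by $\aleq 2^{\ell - k/2}\,T_0^{1/2}\,\nrm{\varphi_k}^2_{L^\infty_t((-T_0,T_0);L^2_x)}$, which tends to $0$ as $\ell \to -\infty$; and the $k$-tail of $\nrm{\varphi}^2_{S^\gmm(-T,T)}$ is dominated by that of $\nrm{\varphi}^2_{S^\gmm(-T_0,T_0)} < \infty$. Hence only finitely many $\ell$ and $k$ matter up to any prescribed error, and continuity reduces to continuity of finite maxima and finite sums of continuous functions. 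For \eqref{eq:contAt0ForS}: as $T \to 0$, the $L^\infty_t L^2_x$ part of $\nrm{\varphi_k}^2_{S^0_k(-T,T)}$ tends to $\nrm{\varphi_k(0)}^2_{L^2_x}$ while the $L^4_t L^\infty_x$ part vanishes, so $\limsup_{T\to0}\nrm{\varphi_k}^2_{S^0_k(-T,T)} = \nrm{\varphi_k(0)}^2_{L^2_x}$; passing the $\limsup$ through the $k$-sum by dominated convergence for series (dominant $2^{2\gmm k^+}\nrm{\varphi_k}^2_{S^0_k(-T_0,T_0)}$) gives $\limsup_{T\to0}\nrm{\varphi}^2_{S^\gmm(-T,T)} \leq \sum_k 2^{2\gmm k^+}\nrm{\varphi_k(0)}^2_{L^2_x} \aeq \nrm{\varphi\vert_{t=0}}^2_{H^\gmm_x}$.

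The main obstacle is the bookkeeping in the last part: the $\ell$-supremum and $k$-sum do not commute with $T$-limits for free, and it is the crude Bernstein estimate — showing the tails in $\ell$ and $k$ are uniformly negligible for $T \leq T_0$ — that makes the interchange legitimate. The first two parts are routine once Lemmas \ref{lem:basicProp4Sk} and \ref{lem:bernstein} and the Littlewood--Paley calculus are in hand.
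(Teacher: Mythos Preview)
Your proof is correct and, for parts (1) and (2), it is exactly the approach the paper indicates (``simple consequences of the Littlewood--Paley square function estimate''), fleshed out in full detail; the one cosmetic quibble is that the justification ``since $r \geq 2$'' for $1 - \tfrac1q - \tfrac2r \geq 0$ really uses the full admissibility condition $\tfrac2q \leq \tfrac12 - \tfrac1r$, but the conclusion is of course correct.

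For part (3) your argument and the paper's differ in packaging, though they rest on the same Bernstein computation. The paper bounds the entire Strichartz portion of $\nrm{\varphi}_{S^\gmm(I)}$ at once by $\nrm{\varphi}_{L^4_t(I;H^{\gmm+\eps}_x)} + \nrm{\varphi}_{L^4_t(I;L^{2-\eps}_x)}$ via the estimate \eqref{eq:lowFreqWEst:pf} (which is exactly your Bernstein bound, rearranged), and then observes that this quantity tends to $0$ as $\abs{I} \to 0$; continuity and the limsup then follow simultaneously since only the $L^\infty_t L^2_x$ part survives. You instead argue by truncation in $\ell$ and $k$ and dominated convergence for series. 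Both routes are valid; the paper's is a little more direct once \eqref{eq:lowFreqWEst:pf} is in hand, while yours is self-contained and avoids the forward reference.
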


\begin{proof} 
The first and second statements are simple consequences of the Littlewood-Paley square function estimate. For the third statement, note that it suffices to consider $\varphi \in C^{\infty}_{t}((-T_{0}, T_{0}); \calS_{x})$ by approximation. 
We claim that
\begin{equation} \label{eq:basicProps4S:pf}
\bb( \sum_{k \in \bbZ} \sup_{\ell \leq k} \, 2^{-(\ell- k)} 2^{-(3/2) k} s^{2 \gmm k^{+}} \nrm{( \sum_{c \in \calC_{\ell, k}} \abs{P_{c} \varphi_{k}}^{2} )^{1/2}}_{L^{4}_{t} (I; L^{\infty}_{x})}^{2} \bb)^{1/2}
\end{equation}
goes to zero as $\abs{I} \to 0$. Indeed, by \eqref{eq:lowFreqWEst:pf} (to be established later) and Bernstein (Lemma \ref{lem:bernstein}), we have (for some $\eps > 0$)
\begin{equation*}
\eqref{eq:basicProps4S:pf} \aleq \sum_{k \in \bbZ} 2^{\gmm k^{+}}\nrm{\varphi_{k}}_{L^{4}_{t} (I;L^{2}_{x})} \aleq \nrm{\varphi}_{L^{4}_{t} (I;H^{\gmm+\eps}_{x})} + \nrm{\varphi}_{L^{4}_{t} (I; L^{2-\eps}_{x})} \to 0 \hbox{ as } \abs{I} \to 0.
\end{equation*}

Recalling the definitions of $S^{0}_{k}$ and $S^{\gmm}$, the desired continuity statement now follows. \qedhere
\end{proof}

The space $S^{\gmm}$ is the main function space in which we will carry out a Picard iteration scheme. As discussed in the introduction, we will use the twisted Duhamel formula (Lemma \ref{lem:tDuhamel}) instead of the usual Duhamel formula and derive an `energy estimate', i.e., an estimate of $(\phi, \covD_{t} \phi)$ in $S^{\gmm} \times S^{\gmm-1}$ in terms of the initial data, $F$ and $A_{0}$. However, due to the presence of $\phi$ on the right-hand side of \eqref{eq:tDuhamel:1} and \eqref{eq:tDuhamel:2}, we need to develop more machinery in order to prove such a result. See Proposition \ref{prop:tDuhamelEnergy} for the final product.

In the remainder of this subsection, we collect some estimates which will be useful for proving the desired energy estimate. The following two lemmas show that a solution to the free wave equation with initial data in $H^{\gmm}_{x} \times H^{\gmm-1}_{x}$ belongs to $S^{\gmm}$. 
\begin{lemma} \label{lem:homEnergy:1}
Let $\gmm \in \bbR$ and $I \subset \bbR$ a finite interval. Then for $f \in H^{\gmm}_{x}$, 
\begin{equation*}
\cos t \abs{\nb} f, \, \sin t \abs{\nb} f \in S^{\gmm}(I),
\end{equation*}
and the following estimates hold.
\begin{gather} 
	\nrm{\cos t \abs{\nb} f}_{S^{\gmm}(I)} + \nrm{\sin t \abs{\nb} f}_{S^{\gmm}(I)} \aleq \nrm{f}_{H^{\gmm}_{x}} 
\end{gather}
\end{lemma}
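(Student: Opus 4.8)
The plan is to deduce everything from the Klainerman--Tataru refinement of Strichartz (Lemma \ref{lem:KTStr}), applied at the endpoint admissible pair $(q,r) = (4,\infty)$ -- admissible since $\frac{2}{4} = \frac{1}{2} - \frac{1}{\infty}$, so \eqref{eq:StrAdmissible} holds with equality. First I would reduce to $f \in \calS_{x}$: for such $f$, both $\cos t\abs{\nb} f$ and $\sin t\abs{\nb} f$ lie in $C^{\infty}_{t}(I; \calS_{x})$, so the general case $f \in H^{\gmm}_{x}$ follows by approximating $f$ by Schwartz functions, applying the claimed estimate to differences, and using that $S^{\gmm}(I)$ is a Banach space (Lemma \ref{lem:basicProps4S}). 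Since $\cos t\abs{\nb}$ and $\sin t\abs{\nb}$ commute with $P_{k}$, and $\nrm{f}_{H^{\gmm}_{x}}^{2} \aeq \sum_{k} 2^{2\gmm k^{+}} \nrm{f_{k}}_{L^{2}_{x}}^{2}$, it suffices by \eqref{eq:Def4Sgmm} to prove the single-frequency bound $\nrm{P_{k} \cos t\abs{\nb} f}_{S^{0}_{k}(I)} \aleq \nrm{f_{k}}_{L^{2}_{x}}$ (and the same for $\sin$), uniformly in $k \in \bbZ$. Writing $\cos t\abs{\nb} = \frac{1}{2}(e^{it\abs{\nb}} + e^{-it\abs{\nb}})$ and $\sin t\abs{\nb} = \frac{1}{2i}(e^{it\abs{\nb}} - e^{-it\abs{\nb}})$, this in turn reduces to estimating $\nrm{e^{\pm it\abs{\nb}} f_{k}}_{S^{0}_{k}(I)}$.

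For the $L^{\infty}_{t}(I; L^{2}_{x})$ term in the definition \eqref{eq:Def4S} of the $S^{0}_{k}$ norm, the bound $\nrm{e^{\pm it\abs{\nb}} f_{k}}_{L^{\infty}_{t} L^{2}_{x}} \aleq \nrm{f_{k}}_{L^{2}_{x}}$ is immediate from unitarity of $e^{\pm it\abs{\nb}}$ on $L^{2}_{x}$. For the second term, I would invoke Lemma \ref{lem:KTStr} with $(q,r) = (4,\infty)$ (noting that $\calC_{\ell,k}$ covers the annulus on which $f_{k}$ is Fourier-supported, so $\sum_{c} P_{c} f_{k} = f_{k}$ and the lemma applies directly), and that restricting the time integral from $\bbR$ to the finite interval $I$ only decreases the left-hand side: for each $\ell \leq k$,
\begin{equation*}
\nrm{\Big(\sum_{c\in\calC_{\ell,k}} \abs{P_{c}(e^{\pm it\abs{\nb}} f_{k})}^{2}\Big)^{1/2}}_{L^{4}_{t}(I; L^{\infty}_{x})} \aleq 2^{\frac{1}{2}(\ell-k)} \, 2^{\frac{3}{4}k} \, \nrm{f_{k}}_{L^{2}_{x}}.
\end{equation*}
Squaring this and multiplying by the weight $2^{-(\ell-k)} 2^{-(3/2)k}$ from \eqref{eq:Def4S} yields exactly $\nrm{f_{k}}_{L^{2}_{x}}^{2}$, with every power of $2$ cancelling; taking the supremum over $\ell \leq k$ gives $\nrm{e^{\pm it\abs{\nb}} f_{k}}_{S^{0}_{k}(I)} \aleq \nrm{f_{k}}_{L^{2}_{x}}$. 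Summing over $k$ against $2^{2\gmm k^{+}}$ and using the triangle inequality over the two signs $\pm$ then produces $\nrm{\cos t\abs{\nb} f}_{S^{\gmm}(I)} + \nrm{\sin t\abs{\nb} f}_{S^{\gmm}(I)} \aleq \nrm{f}_{H^{\gmm}_{x}}$.

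There is no substantive obstacle here: the whole point is that the weights in the definition \eqref{eq:Def4S} of $S^{0}_{k}$ have been chosen precisely so as to absorb the frequency- and angular-scale-dependent constant in the Klainerman--Tataru bound \eqref{eq:KTStr} at the endpoint $(4,\infty)$, so the estimate is essentially a tautology once the reductions are in place. The only items needing care are the density/completeness reduction to Schwartz data and the bookkeeping of the exponents $1 - \frac{2}{q} - \frac{2}{r} = \frac{1}{2}$ and $1 - \frac{1}{q} - \frac{2}{r} = \frac{3}{4}$ when $(q,r) = (4,\infty)$, both of which are routine.
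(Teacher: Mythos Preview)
Your proposal is correct and is exactly the approach the paper takes: the paper's own proof is the single line ``This is an obvious consequence of Lemma~\ref{lem:KTStr}, by an approximation argument,'' and you have simply unpacked that sentence in full detail (reduction to Schwartz data, reduction to a fixed dyadic frequency, splitting $\cos,\sin$ into half-waves, and matching the exponents $1-\tfrac{2}{q}-\tfrac{2}{r}=\tfrac12$, $1-\tfrac{1}{q}-\tfrac{2}{r}=\tfrac34$ at $(q,r)=(4,\infty)$ against the weights in \eqref{eq:Def4S}).
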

\begin{proof} 
This is an obvious consequence of Lemma \ref{lem:KTStr}, by an approximation argument. 
\end{proof}

\begin{lemma} \label{lem:homEnergy:2}
Let $\gmm \in \bbR$ and $I \subset \bbR$ an interval such that $0 \in I$ and $\abs{I} \leq 1$. Then for $g \in H^{\gmm-1}$, 
\begin{equation*}
	\frac{\sin t \abs{\nb}}{\abs{\nb}} g \in S^{\gmm}(I)
\end{equation*}
and the following estimate holds.
\begin{equation*}
	\nrm{\frac{\sin t \abs{\nb}}{\abs{\nb}} g}_{S^{\gmm}(I)} \aleq \nrm{g}_{H^{\gmm-1}_{x}}.
\end{equation*}
\end{lemma}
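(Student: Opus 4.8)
The plan is to reduce the $S^\gmm$-estimate for $\frac{\sin t\abs{\nb}}{\abs{\nb}}g$ to the homogeneous estimate already established in Lemma~\ref{lem:homEnergy:1}, splitting into low and high frequencies. Write $g = g_{\leq 0} + g_{>0}$ where $g_{\leq 0} = P_{\leq 0} g$ and $g_{>0} = \sum_{k > 0} P_k g$. For the high-frequency part, since $2^{k^+} \aeq 2^k$ when $k > 0$, we have $\nrm{\abs{\nb}^{-1} g_{>0}}_{H^\gmm_x} \aeq \nrm{g_{>0}}_{H^{\gmm-1}_x} \leq \nrm{g}_{H^{\gmm-1}_x}$, so applying Lemma~\ref{lem:homEnergy:1} to the datum $\abs{\nb}^{-1} g_{>0} \in H^\gmm_x$ immediately gives $\nrm{\frac{\sin t\abs{\nb}}{\abs{\nb}} g_{>0}}_{S^\gmm(I)} \aleq \nrm{g}_{H^{\gmm-1}_x}$. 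So the only real content is the low-frequency piece, where the inverse Laplacian $\abs{\nb}^{-1}$ is not harmless on $H^\gmm_x$ but the evolution is regularizing in time because $\abs{I} \leq 1$.

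For the low frequencies, by definition of $S^\gmm$ (and since only $k \leq 0$ contributes, where $k^+ = 0$) it suffices to bound, for each $k \leq 0$, the quantity $\nrm{P_k \frac{\sin t\abs{\nb}}{\abs{\nb}} g}_{S^0_k(I)}$ and show these are square-summable by $\nrm{g}_{H^{\gmm-1}_x}$. For a fixed $k \leq 0$ I would estimate $S^0_k$ directly: the $L^\infty_t L^2_x$ part is controlled by $\nrm{\frac{\sin t\abs{\nb}}{\abs{\nb}} g_k}_{L^\infty_t L^2_x} \aleq \abs{I}\, \nrm{g_k}_{L^2_x}$ using $\abs{\sin(t\abs{\xi})} \leq \abs{t}\abs{\xi}$ on the Fourier support of $g_k$ and $\abs{t} \leq \abs{I} \leq 1$; and the Strichartz part of the $S^0_k$ norm is handled by writing $\frac{\sin t\abs{\nb}}{\abs{\nb}} = \frac{1}{2i\abs{\nb}}(e^{it\abs{\nb}} - e^{-it\abs{\nb}})$ and applying the Klainerman–Tataru estimate (Lemma~\ref{lem:KTStr}) to each half, with the extra factor $2^{-k}$ from $\abs{\nb}^{-1}$ absorbed into the powers of $2^k$ appearing in \eqref{eq:KTStr}; one checks that the resulting power of $2^k$ is nonnegative (this is where $\gmm - 1$ versus $\gmm$ and the admissibility constraint enter), so summability over $k \leq 0$ holds. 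Alternatively, and perhaps more cleanly, one observes that on the time interval $I$ with $\abs{I} \leq 1$ one can trade a factor of $\brk{\nb}^{-1}$ for $\abs{I}$-smallness using the fundamental theorem of calculus $\frac{\sin t\abs{\nb}}{\abs{\nb}} g_k = \int_0^t \cos(s\abs{\nb}) g_k \, \ud s$, so that $\nrm{\frac{\sin t\abs{\nb}}{\abs{\nb}} g_{\leq 0}}_{S^\gmm(I)} \aleq \int_I \nrm{\cos(s\abs{\nb}) g_{\leq 0}}_{S^\gmm_{\mathrm{pointwise}}} \, \ud s \aleq \abs{I}\, \sup_{s} \nrm{\cos(s\abs{\nb}) g_{\leq 0}}_{\text{(energy + Strichartz)}}$; but since the Strichartz norm already involves a time integral this argument needs care, so I would prefer the direct dyadic estimate.

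The main obstacle, such as it is, lies in the bookkeeping of the exponents for the low-frequency Strichartz component of $S^0_k$: one must verify that after dividing by $\abs{\nb} \aeq 2^k$ and inserting the gain $2^{(1-\frac 2q-\frac 2r)(\ell-k)} 2^{(1-\frac 1q-\frac 2r)k}$ from \eqref{eq:KTStr}, together with the normalizing weights $2^{-(\ell-k)} 2^{-(3/2)k}$ built into \eqref{eq:Def4S}, the net power of $2^k$ is $\geq 0$ for all admissible $(q,r) = (4,\infty)$ and all $\ell \leq k \leq 0$, so that the geometric series over $k \leq 0$ converges. Once this numerology is confirmed there is no genuine difficulty; the proof is then a one-paragraph combination of the high-frequency reduction to Lemma~\ref{lem:homEnergy:1} and the low-frequency dyadic estimate just described. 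In writing it up I would simply remark that the $\abs{I} \leq 1$ hypothesis is used solely to control the low-frequency behavior of $\abs{\nb}^{-1}$, exactly as in standard treatments of the inhomogeneous $S^\gmm$ spaces.
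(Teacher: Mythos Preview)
Your high-frequency reduction to Lemma~\ref{lem:homEnergy:1} is fine and matches the paper. The gap is in the low-frequency Strichartz component, where your claimed numerology check fails. With $(q,r)=(4,\infty)$, the Klainerman--Tataru bound \eqref{eq:KTStr} applied to $f_k = \abs{\nb}^{-1} g_k$ gives
\[
\nrm{(\textstyle\sum_{c}\abs{P_c e^{\pm it\abs{\nb}} f_k}^2)^{1/2}}_{L^4_t L^\infty_x}
\aleq 2^{(1/2)(\ell-k)} 2^{(3/4)k}\nrm{f_k}_{L^2_x}
\aleq 2^{(1/2)(\ell-k)} 2^{(3/4)k} \cdot 2^{-k}\nrm{g_k}_{L^2_x},
\]
and after dividing by the normalizing weight $2^{(1/2)(\ell-k)} 2^{(3/4)k}$ built into \eqref{eq:Def4S} you are left with $2^{-k}\nrm{g_k}_{L^2_x}$. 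The net power of $2^{k}$ is thus $-1$, not $\geq 0$: for $k\to -\infty$ this diverges, and square-summing over $k\leq 0$ would require $g$ to lie in $\dot H^{-1}_x$ at low frequencies, which $H^{\gmm-1}_x$ does not give. The dispersive estimate simply cannot see the smallness of $\sin(t\abs{\nb})/\abs{\nb}$ once you have split into $e^{\pm it\abs{\nb}}$ halves.

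The paper fixes this by abandoning dispersion altogether at low frequencies: since each $P_c$ with $c\in\calC_{\ell,k}$ has Fourier support in a ball of radius $2^{\ell}$, Bernstein (Lemma~\ref{lem:bernstein}) and $L^2$ almost orthogonality give
\[
\nrm{(\textstyle\sum_{c}\abs{P_c \varphi_k}^2)^{1/2}}_{L^4_t L^\infty_x}
\aleq 2^{(1/2)(\ell-k)}2^{(3/4)k}\nrm{\varphi_k}_{L^4_t L^2_x}
\aleq 2^{(1/2)(\ell-k)}2^{(3/4)k}\nrm{\varphi_k}_{L^\infty_t L^2_x}
\]
using $\abs{I}\leq 1$ (this is Lemma~\ref{lem:lowFreqWEst}). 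This reduces the entire $S^0_k$ norm to the $L^\infty_t L^2_x$ piece, which you have already correctly bounded by $\nrm{g_k}_{L^2_x}$ via $\abs{\sin(t\abs{\xi})}\leq \abs{t}\abs{\xi}\leq \abs{\xi}$. Your integral-in-time alternative was actually closer to the right idea, but the cleanest route is the Bernstein argument just described.
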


The operator $\abs{\nb}^{-1}$ is unfavorable for low frequencies; we will basically integrate in time to overcome this. The (proof of the) following lemma makes this idea more precise.
\begin{lemma} \label{lem:lowFreqWEst}
Let $k \in \bbZ$ and $I \subset \bbR$ satsify $k \leq 0$ and $\abs{I} \leq 1$. Then for $\varphi \in C^{\infty}_{t}(I; \calS_{x})$, 
\begin{equation} \label{eq:lowFreqWEst}
\nrm{\varphi_{k}}_{S^{0}_{k}}  \aleq \nrm{\varphi_{k}}_{L^{\infty}_{t}(I; L^{2}_{x})}
\end{equation}
\end{lemma}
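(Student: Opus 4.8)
The plan is to estimate the two pieces of $\nrm{\varphi_k}_{S^0_k(I)}$ in \eqref{eq:Def4S} separately. The first, $\nrm{\varphi_k}_{L^\infty_t(I;L^2_x)}$, is exactly the right-hand side of \eqref{eq:lowFreqWEst}, so there is nothing to do there. Hence the whole content is to show that for every $\ell \le k$,
\[
2^{-(\ell-k)} 2^{-(3/2)k} \nrm{\big( \textstyle\sum_{c \in \calC_{\ell,k}} \abs{P_c \varphi_k}^2\big)^{1/2}}^2_{L^4_t(I;L^\infty_x)} \aleq \nrm{\varphi_k}_{L^\infty_t(I;L^2_x)}^2 ,
\]
with the implicit constant independent of $\ell,k$ (given $k \le 0$) and of $I$ (given $\abs{I}\le 1$).

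First I would throw away the dispersive structure and bound the inner $L^\infty_x$ norm by brute force, pointwise in $t$: using $\nrm{(\sum_c \abs{P_c\varphi_k}^2)^{1/2}}_{L^\infty_x} \le (\sum_c \nrm{P_c\varphi_k}^2_{L^\infty_x})^{1/2}$, then Bernstein (Lemma \ref{lem:bernstein}) on each $P_c\varphi_k$, whose Fourier support lies in a ball of radius $\aeq 2^\ell$ so that $\abs{B}^{1/2}\aeq 2^\ell$, and finally the $L^2_x$ almost orthogonality of $\set{P_c}_{c\in\calC_{\ell,k}}$ recorded before Lemma \ref{lem:KTStr}, one gets $\nrm{(\sum_c \abs{P_c\varphi_k}^2)^{1/2}}_{L^\infty_x} \aleq 2^\ell \nrm{\varphi_k}_{L^2_x}$. (When $\ell=k$ this is just Bernstein for one annular piece, consistent with the singleton convention for $\calC_{k,k}$.) Then I would integrate in time — this is the \emph{integration in time} alluded to in the paragraph preceding the lemma — via H\"older on $I$ using $\abs{I}\le1$, so that $\nrm{(\sum_c \abs{P_c\varphi_k}^2)^{1/2}}_{L^4_t(I;L^\infty_x)} \aleq 2^\ell\abs{I}^{1/4}\nrm{\varphi_k}_{L^\infty_t(I;L^2_x)} \le 2^\ell \nrm{\varphi_k}_{L^\infty_t(I;L^2_x)}$.

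Substituting this into the displayed quantity bounds it by $2^{-(\ell-k)}2^{-(3/2)k}2^{2\ell}\nrm{\varphi_k}^2_{L^\infty_tL^2_x} = 2^{\ell - k/2}\nrm{\varphi_k}^2_{L^\infty_tL^2_x}$, and $\sup_{\ell\le k}2^{\ell-k/2}=2^{k/2}\le1$ since $k\le0$; combined with the first piece this finishes the proof. The only thing to watch — and it is really just a bookkeeping check rather than a genuine obstacle — is the exponent arithmetic in this last line: the gain $2^{2\ell}$ from Bernstein must beat the loss $2^{-(\ell-k)-(3/2)k}$ uniformly over $\ell\le k\le0$. Conversely, this is precisely where both hypotheses enter, $\abs{I}\le1$ for the H\"older-in-time step and $k\le0$ for the final supremum, which is as it should be: at high frequencies the $S^0_k$ norm genuinely encodes dispersive decay and is not controlled by the energy norm alone.
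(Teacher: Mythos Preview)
Your proof is correct and follows essentially the same approach as the paper: pointwise-in-$t$ Bernstein on each $P_c$ piece combined with $L^2_x$ almost orthogonality, followed by H\"older in time using $\abs{I}\le 1$. The paper records the spatial bound as $2^{(1/2)(\ell-k)}2^{(3/4)k}\nrm{\varphi_k}_{L^2_x}$ (chosen to cancel the $S^0_k$ weight exactly), whereas you obtain the sharper $2^{\ell}\nrm{\varphi_k}_{L^2_x}$ and then check $2^{\ell-k/2}\le 1$ directly; these are equivalent under $\ell\le k\le 0$.
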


\begin{proof} 
For $\ell \leq k \leq 0$ and $\abs{I} \leq 1$, we claim that
\begin{equation} \label{eq:lowFreqWEst:pf}
 \nrm{(\sum_{c \in \calC_{\ell, k}} \abs{P_{c} \varphi_{k}}^{2} )^{1/2}}_{L^{4}_{t} (I; L^{\infty}_{x})}
\aleq 2^{(1/2) (\ell- k)} 2^{(3/4)k}\nrm{\varphi_{k}}_{L^{4}_{t} (I; L^{2}_{x})}
\end{equation}

Then \eqref{eq:lowFreqWEst} would follow by H\"older in time (using $\abs{I} \leq 1$).

Let $t \in I$. Then by Bernstein (Lemma \ref{lem:bernstein}) and $L^{2}$ almost orthogonality of $P_{c}$'s,
\begin{align*}
\nrm{(\sum_{c \in \calC_{\ell, k}} \abs{P_{c} \varphi_{k}(t)}^{2} )^{1/2} }_{L^{\infty}_{x}}
\aleq &(\sum_{c \in \calC_{\ell, k}} 2^{\ell} \nrm{P_{c} \varphi_{k}(t)}^{2}_{L^{2}_{x}} )^{1/2} 
\aleq 2^{(1/2) (\ell- k)} 2^{(3/4) k} \nrm{\varphi_{k}(t)}_{L^{2}_{x}}.
\end{align*}

Taking the $L^{4}_{t}$ norm, \eqref{eq:lowFreqWEst:pf} follows. \qedhere
\end{proof}

We are ready to prove Lemma \ref{lem:homEnergy:2}
\begin{proof} [Proof of Lemma \ref{lem:homEnergy:2}]
By an approximation argument, it suffices to consider $g \in \calS_{x}$. We wish to establish
\begin{equation} \label{eq:homEnergy:pf:1}
\bb( \sum_{k \in \bbZ} 2^{2 \gmm k^{+}} \nrm{\frac{\sin t \abs{\nb}}{\abs{\nb}} g_{k}}^{2}_{S^{0}_{k}(I)} \bb)^{1/2} \aleq \nrm{g}_{H^{\gmm-1}_{x}}
\end{equation}

For $k > 0$, we have $2^{\gmm k^{+}} \nrm{\frac{\sin t \abs{\nb}}{\abs{\nb}} g_{k}}_{S^{0}_{k}(I)} \aleq 2^{(\gmm-1) k}\nrm{\sin t \abs{\nb} g_{k}}_{S^{0}_{k}(I)}$. Therefore, \eqref{eq:homEnergy:pf:1} for $k > 0$ follows from Lemma \ref{lem:homEnergy:1}.

For $k \leq 0$, we apply Lemma \ref{lem:lowFreqWEst} and use the fact that $\sup_{t \in I} \abs{t} \leq 1$ to proceed as follows:
\begin{align*}
2^{\gmm k^{+}} \nrm{\frac{\sin t \abs{\nb}}{\abs{\nb}} g_{k}}_{S^{0}_{k}(I)}
\aleq & \nrm{\frac{\sin t \abs{\nb}}{\abs{\nb}} g_{k}}_{L^{\infty}_{t} (I; L^{2}_{x})} 
\leq \nrm{\frac{\sin t \abs{\nb}}{t \abs{\nb}} g_{k}}_{L^{\infty}_{t} (I; L^{2}_{x})}
\end{align*}

Note that $\frac{\sin t \abs{\nb}}{t \abs{\nb}}$ has a symbol which is uniformly bounded in $t$; thus the right-hand side is bounded by $\aleq \nrm{g_{k}}_{L^{2}_{x}}$. Then by the $L^{2}$ almost orthogonality of $P_{k}$'s, we obtain \eqref{eq:homEnergy:pf:1} for $k \leq 0$ as well. \qedhere
\end{proof}

As a simple corollary of the above estimates, we obtain estimates for Duhamel integrals. We omit the easy proof.
\begin{corollary} \label{cor:inhomEnergy}
Let $I \subset \bbR$ be an interval such that $0 \in I$ and $\abs{I} \leq 1$. Then the following statements hold.
\begin{enumerate}
\item For $F \in L^{1}_{t} (I; H^{\gmm-1}_{x})$, 
\begin{equation*}
	\int_{0}^{t} \frac{\sin (t-t') \abs{\nb}}{\abs{\nb}} F(t') \, \ud t'  \in S^{\gmm}(I)
\end{equation*}
and the following estimate holds.
\begin{equation}
	\nrm{\int_{0}^{t} \frac{\sin (t-t') \abs{\nb}}{\abs{\nb}} F(t') \, \ud t'}_{S^{\gmm}(I)} \aleq \nrm{F}_{L^{1}_{t} (I; H^{\gmm-1}_{x})}.
\end{equation}

\item For $G \in L^{1}_{t} (I; H^{\gmm}_{x})$, 
\begin{equation*}
	\int_{0}^{t} \sin (t-t') \abs{\nb} G(t') \, \ud t', \,
	\int_{0}^{t} \cos (t-t') \abs{\nb} G(t') \, \ud t' \in S^{\gmm}(I)	
\end{equation*}
and the following estimates hold.
\begin{gather}
	\nrm{\int_{0}^{t} \sin (t-t') \abs{\nb} G(t') \, \ud t'}_{S^{\gmm}(I)} \aleq \nrm{G}_{L^{1}_{t} (I; H^{\gmm}_{x})} , \\
	\nrm{\int_{0}^{t} \cos (t-t') \abs{\nb} G(t') \, \ud t'}_{S^{\gmm}(I)} \aleq \nrm{G}_{L^{1}_{t} (I; H^{\gmm}_{x})}.
\end{gather}
\end{enumerate}
\end{corollary}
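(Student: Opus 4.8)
The plan is to obtain both estimates directly from the homogeneous bounds of Lemmas \ref{lem:homEnergy:1} and \ref{lem:homEnergy:2}, combined with Minkowski's integral inequality in the time variable. No further structure of the wave propagators enters, which is why the proof is routine; the only point needing any care is checking that the somewhat intricate norm $\nrm{\cdot}_{S^{0}_{k}}$ really does respect Minkowski's inequality in $t$.

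First I would record the following Minkowski-type principle: if $\varphi(t,x) = \int_{I} K(t,x,t')\,\ud t'$, then
\[
	\nrm{\varphi}_{S^{\gmm}(I)} \leq \int_{I} \nrm{K(\cdot,\cdot,t')}_{S^{\gmm}(I)}\,\ud t'.
\]
To see this, unwind the definitions \eqref{eq:Def4S}--\eqref{eq:Def4Sgmm}: the $S^{\gmm}(I)$ norm is an $\ell^{2}_{k}$ combination of the dyadic pieces $\nrm{\cdot}_{S^{0}_{k}(I)}$, each of which is in turn the $\ell^{2}$ combination of $\nrm{\cdot}_{L^{\infty}_{t}(I;L^{2}_{x})}$ with the supremum over $\ell \le k$ of the weighted seminorms $\varphi \mapsto \nrm{(\sum_{c\in\calC_{\ell,k}}\abs{P_{c}\varphi}^{2})^{1/2}}_{L^{4}_{t}(I;L^{\infty}_{x})}$. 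The Littlewood--Paley operators $P_{k}$, $P_{c}$ act only in $x$ and hence commute with $\int_{I}\cdot\,\ud t'$, and every time-integrability exponent occurring above is $\ge 1$ (namely $4$ and $\infty$); thus Minkowski's integral inequality applies to each constituent seminorm. Since forming suprema and $\ell^{2}$ combinations of seminorms preserves this property, chaining the estimates yields the displayed bound. This bookkeeping is the step I expect to be the (mild) obstacle: the $S^{0}_{k}$ norm, with its inner supremum over $\ell$ of square-function $L^{4}_{t}L^{\infty}_{x}$ norms, looks forbidding, but it is assembled entirely from $L^{q}_{t}$-type norms with $q\ge1$, so the argument goes through cleanly.

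Next I would write the Duhamel integral $\int_{0}^{t} U(t-t')H(t')\,\ud t'$ --- taking $U(s)=\frac{\sin s\abs{\nb}}{\abs{\nb}}$ and $H=F$ for part (1), and $U(s)\in\set{\cos s\abs{\nb},\,\sin s\abs{\nb}}$ and $H=G$ for part (2) --- in the form $\int_{I} K(\cdot,\cdot,t')\,\ud t'$, where $K(t,x,t')$ equals $U(t-t')H(t')$, viewed as a function of $x$, when $t'$ lies between $0$ and $t$, and vanishes otherwise. For each fixed $t'\in I$, the map $t\mapsto K(t,\cdot,t')$ is the restriction, to a subinterval of $I$, of the free-wave evolution $t\mapsto U(t-t')H(t')$; restriction in time does not increase the $L^{q}_{t}$ norms entering $\nrm{\cdot}_{S^{0}_{k}}$, and these norms are invariant under the time translation $t\mapsto t-t'$, so
\[
	\nrm{K(\cdot,\cdot,t')}_{S^{\gmm}(I)} \leq \nrm{U(\cdot)H(t')}_{S^{\gmm}(I-t')},
\]
where $I-t':=\set{t-t':t\in I}$ again contains $0$ and has length $\abs{I-t'}=\abs{I}\le1$.

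Finally, combining the last two displays gives
\[
	\nrm{\int_{0}^{t} U(t-t')H(t')\,\ud t'}_{S^{\gmm}(I)} \leq \int_{I} \nrm{U(\cdot)H(t')}_{S^{\gmm}(I-t')}\,\ud t'.
\]
For part (1), Lemma \ref{lem:homEnergy:2} (applicable since $0\in I-t'$ and $\abs{I-t'}\le1$) bounds the integrand by $\aleq\nrm{F(t')}_{H^{\gmm-1}_{x}}$, and integrating in $t'\in I$ yields $\aleq\nrm{F}_{L^{1}_{t}(I;H^{\gmm-1}_{x})}$; for part (2), Lemma \ref{lem:homEnergy:1} bounds it by $\aleq\nrm{G(t')}_{H^{\gmm}_{x}}$, giving $\aleq\nrm{G}_{L^{1}_{t}(I;H^{\gmm}_{x})}$. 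Since $S^{\gmm}(I)$ is a Banach space (Lemma \ref{lem:basicProps4S}) and these bounds are uniform, a standard approximation argument passes from smooth $F,G$ to general $F\in L^{1}_{t}(I;H^{\gmm-1}_{x})$ and $G\in L^{1}_{t}(I;H^{\gmm}_{x})$, and in particular shows that the Duhamel integrals define elements of $S^{\gmm}(I)$.
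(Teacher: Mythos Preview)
Your proof is correct and follows exactly the approach the paper intends: the paper omits the proof entirely, remarking only that the corollary is a simple consequence of Lemmas \ref{lem:homEnergy:1} and \ref{lem:homEnergy:2}, which is precisely the Minkowski-in-time argument you carry out in detail.
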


\subsection{Some bilinear estimates for $(\mathrm{HH})$ interaction}
The Klainerman-Tataru refinement of the Strichartz estimate (Lemma \ref{lem:KTStr}) offers an improvement over the usual Strichartz estimate when analyzing $(\mathrm{HH})$ interactions. In this subsection, we present two technical lemmas to make this idea more precise.

The following estimate gives an improved estimate for the $(\mathrm{HH})$ interaction in `double Strichartz' norms.
\begin{lemma} \label{lem:bilinStr}
Let $I \subset \bbR$ be a finite interval and $q, r \in \bbR$ and $\sgm \in \bbR$ satisfy
\begin{equation*}
	r < \infty, \quad
	 (q,r) \in \mathrm{Str}, \quad
	-2 + \frac{4}{r} + \frac{4}{q} < \sgm < 0.
\end{equation*}

Then for $\varphi^{1}, \varphi^{2} \in C_{t}^{\infty} (I; \calS_{x})$ and $(k_{0}, k_{1}, k_{2}) \in \mathrm{HH}$, we have
\begin{equation} \label{eq:bilinStr}
	\nrm{\abs{\nb}^{\sgm} P_{k_{0}}(\varphi^{1}_{k_{1}} \varphi^{2}_{k_{2}})}_{L^{q/2}_{t} (I; L^{r/2}_{x})} \aleq 2^{\gmm k_{1}} \nrm{\varphi^{1}_{k_{1}}}_{S^{0}_{k_{1}}(I)} 2^{\gmm k_{2}} \nrm{\varphi^{2}_{k_{2}}}_{S^{0}_{k_{2}}(I)}
\end{equation}
where $\gmm = 1 - \frac{2}{r} - \frac{1}{q}$.

\end{lemma}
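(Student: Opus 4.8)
The plan is to reduce the claimed bound to the Klainerman--Tataru refinement (Lemma~\ref{lem:KTStr}), exploiting the almost orthogonality coming from the fact that the output frequency $2^{k_0}$ is much smaller than the common input frequency $2^{k_1} \aeq 2^{k_2}$. Fix $(k_0, k_1, k_2) \in \mathrm{HH}$, so $k_0 \leq \min\{k_1, k_2\} - 5$ and $\abs{k_1 - k_2} \leq 5$. First I would move the $\abs{\nb}^{\sgm}$ past the projection $P_{k_0}$: since $\sgm < 0$ and the output is localized to frequencies $\aleq 2^{k_0}$, the multiplier $\abs{\nb}^{\sgm} P_{k_0}$ is bounded on $L^{r/2}_x$ with norm $\aleq 2^{\sgm k_0}$ (by Bernstein / Mikhlin on the localized piece), so it suffices to prove
\begin{equation*}
	\nrm{P_{k_0}(\varphi^1_{k_1} \varphi^2_{k_2})}_{L^{q/2}_t(I; L^{r/2}_x)} \aleq 2^{-\sgm k_0} \, 2^{\gmm k_1} \nrm{\varphi^1_{k_1}}_{S^0_{k_1}(I)} \, 2^{\gmm k_2} \nrm{\varphi^2_{k_2}}_{S^0_{k_2}(I)}.
\end{equation*}

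Next I would introduce the finitely overlapping cap decomposition at scale $\ell := k_0$: write $\varphi^j_{k_j} = \sum_{c \in \calC_{\ell, k_j}} P_c \varphi^j_{k_j}$. The key geometric observation is that for the product $P_c \varphi^1_{k_1} \cdot P_{c'} \varphi^2_{k_2}$ to have Fourier support in the ball of radius $\aleq 2^{k_0}$ around the origin, the caps $c$ and $c'$ must be within $O(2^{k_0})$ of being antipodal; hence for each $c$ there are only $O(1)$ admissible $c'$. Thus
\begin{equation*}
	P_{k_0}(\varphi^1_{k_1}\varphi^2_{k_2}) = \sum_{c} \sum_{c' \sim -c} P_{k_0}\big( (P_c \varphi^1_{k_1})(P_{c'}\varphi^2_{k_2}) \big),
\end{equation*}
and, crucially, the summands on the right have essentially disjoint Fourier supports in the output variable (each lives near the annulus-to-ball region but the individual pieces are separated), so by $L^2$-type almost orthogonality and the embedding $\ell^{q/2} \hookrightarrow \ell^2$ (valid since $q \geq 4 > 2$, hence $q/2 \geq 2$) one can pull the sum outside after pairing with Hölder. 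Concretely, by Hölder in $(t,x)$ with exponents $(q/2, r/2) = (q,r)/2$ followed by Minkowski/Cauchy--Schwarz over the $O(1)$-to-one pairing of caps,
\begin{equation*}
	\nrm{P_{k_0}(\varphi^1_{k_1}\varphi^2_{k_2})}_{L^{q/2}_t L^{r/2}_x} \aleq \Big( \sum_{c} \nrm{P_c \varphi^1_{k_1}}_{L^q_t L^r_x}^2 \Big)^{1/2} \Big( \sum_{c'} \nrm{P_{c'} \varphi^2_{k_2}}_{L^q_t L^r_x}^2 \Big)^{1/2},
\end{equation*}
where I used that for each $c$ only $O(1)$ caps $c'$ contribute and Cauchy--Schwarz in $c$. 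Each square-sum over caps is exactly the quantity controlled by Lemma~\ref{lem:basicProp4Sk} (equivalently \eqref{eq:KTStr4Sk}) with parameters $\ell = k_0$, $k = k_j$: it is bounded by $2^{(1 - \frac{2}{q} - \frac{2}{r})(k_0 - k_j)} 2^{(1 - \frac{1}{q} - \frac{2}{r})k_j} \nrm{\varphi^j_{k_j}}_{S^0_{k_j}(I)}$.

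Finally I would collect the powers of $2$. From the two cap-sums I get a factor $2^{(1 - \frac{2}{q} - \frac{2}{r})(2k_0 - k_1 - k_2)} 2^{(1 - \frac{1}{q} - \frac{2}{r})(k_1 + k_2)}$, and combining with the $2^{-\sgm k_0}$ from the $\abs{\nb}^\sgm P_{k_0}$ bound, the net power of $2^{k_0}$ is $-\sgm + 2(1 - \frac{2}{q} - \frac{2}{r})$, which is $\geq 0$ precisely by the hypothesis $\sgm > -2 + \frac{4}{q} + \frac{4}{r}$; since $k_0 \leq \min\{k_1,k_2\} - 5$, this excess power of $2^{k_0}$ is summable against (and dominated by) the input frequencies, and after absorbing it we are left with exactly $2^{(1 - \frac{1}{q} - \frac{2}{r})k_1} 2^{(1 - \frac{1}{q} - \frac{2}{r})k_2} = 2^{\gmm k_1} 2^{\gmm k_2}$ times the $S^0$ norms, which is \eqref{eq:bilinStr}. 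I expect the main obstacle to be the bookkeeping of the cap geometry --- verifying that the HH output constraint really does force the $O(1)$-to-one pairing of antipodal caps at scale $2^{k_0}$, and arranging the almost-orthogonality argument so that the $\ell^{q/2}$ (rather than $\ell^2$) summation over caps genuinely costs nothing; the condition $r < \infty$ is what lets the Strichartz pair be used with $r/2 < \infty$ and the condition $\sgm < 0$ is what makes the low-frequency multiplier bound favorable, so both hypotheses enter exactly here.
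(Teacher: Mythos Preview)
Your overall strategy is exactly the one the paper has in mind (it cites Klainerman--Tataru, and the paper's own proof of the closely related Lemma~\ref{lem:bilinVarStr} follows the same cap-decomposition-plus-diagonal-pairing template). There is, however, a genuine ordering issue in your step from the cap decomposition to the square sums.

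You take norms first and then apply Cauchy--Schwarz over caps, arriving at $\big(\sum_c \nrm{P_c\varphi^1_{k_1}}_{L^q_tL^r_x}^2\big)^{1/2}$, and claim this is ``exactly the quantity controlled by Lemma~\ref{lem:basicProp4Sk}''. It is not: \eqref{eq:KTStr4Sk} bounds the $L^q_tL^r_x$ norm of the square function $(\sum_c\abs{P_c\varphi}^2)^{1/2}$, and for $q,r>2$ Minkowski's inequality goes the wrong way to pass between the two. The fix is to swap the order: first use the $O(1)$-to-one antipodal pairing to obtain the \emph{pointwise} bound
\begin{equation*}
\Big\vert\sum_{c,\,c'\sim-c}(P_c\varphi^1_{k_1})(P_{c'}\varphi^2_{k_2})\Big\vert \aleq \Big(\sum_c\abs{P_c\varphi^1_{k_1}}^2\Big)^{1/2}\Big(\sum_{c'}\abs{P_{c'}\varphi^2_{k_2}}^2\Big)^{1/2},
\end{equation*}
then take the $L^{q/2}_tL^{r/2}_x$ norm and apply H\"older. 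This is precisely how the paper argues in the proof of Lemma~\ref{lem:bilinVarStr}, and it yields the square-function norms that $S^0_{k_j}$ actually controls. Your remark about ``$\ell^{q/2}\hookrightarrow\ell^2$'' and ``almost orthogonality of output summands'' is a red herring here and should be dropped.

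A second, more minor point: your final bookkeeping is too quick. Writing $\alp=1-\tfrac{2}{q}-\tfrac{2}{r}$ and using $k_1\approx k_2$, after bounding $2^{(\sgm+2\alp)k_0}$ by replacing $k_0$ with $\min\{k_1,k_2\}$ you are not left with exactly $2^{\gmm(k_1+k_2)}$ but rather with $2^{(\gmm+\sgm/2)(k_1+k_2)}$; there is a residual factor $2^{\sgm k_1}$ that your argument does not account for. This is in fact the exponent the paper actually uses when it applies the lemma (see the $(\mathrm{HH})$ cases in the proofs of \eqref{eq:abstractEst4A:2} and \eqref{eq:abstractEst4A:3}, where the bound reads $2^{k_j/4}$ rather than $2^{\gmm k_j}$), so the stated exponent $\gmm$ in \eqref{eq:bilinStr} appears to be a slight misprint for $\gmm+\sgm/2$.
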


For a proof, we refer to \cite[Proof of Theorem 4]{Klainerman:1999do}; we remark that although the theorem in \cite{Klainerman:1999do} is proved for homogeneous waves, the above statement may be easily read off from their proof.

We also need the following technical variant of \eqref{eq:bilinStr}. We remark that our proof below is a slight variant of the aforementioned \cite[Proof of Theorem 4]{Klainerman:1999do}.
\begin{lemma} \label{lem:bilinVarStr}
Let $\sgm > - 1/2$. Then for $\varphi^{1}, \varphi^{2} \in C_{t}^{\infty}(I, \calS_{x})$, we have
\begin{equation} \label{eq:bilinVarStr}
	\nrm{\sum_{(k_{0}, k_{1}, k_{2}) \in \mathrm{HH}} \brk{\nb}^{\sgm} P_{k_{0}} (\varphi^{1}_{k_{1}} \varphi^{2}_{k_{2}})}_{L^{2}_{t}(I; L^{2}_{x})} \aleq \nrm{\varphi^{1}}_{L^{4}_{t} (I; \dot{H}^{3/4}_{x})} \nrm{\varphi^{2}}_{S^{\gmm}}
\end{equation} 
\end{lemma}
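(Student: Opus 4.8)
\textbf{Proof plan for Lemma \ref{lem:bilinVarStr}.}

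The plan is to dyadically decompose in all three frequencies, fix a triple $(k_0, k_1, k_2) \in \mathrm{HH}$ (so $k_0 \le \min\{k_1, k_2\} - 5$ and $|k_1 - k_2| \le 5$), and estimate the single block $\nrm{\brk{\nb}^\sgm P_{k_0}(\varphi^1_{k_1}\varphi^2_{k_2})}_{L^2_t L^2_x}$. After summing these blocks we need to recover the stated product of norms. The first reduction is to note $\brk{\nb}^\sgm P_{k_0} \aeq 2^{\sgm k_0^+} P_{k_0}$ by the finite band property, and since $k_0 \le k_2 + O(1)$ the factor $2^{\sgm k_0^+}$ is harmless relative to the high-frequency gains we will extract; in particular for $k_0 \le 0$ it is $\aeq 1$ and for $k_0 > 0$ it is $\aleq 2^{\sgm k_2^+}$ with $\sgm$ effectively nonnegative there.

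The heart of the matter is a bilinear $L^2_{t,x}$ estimate for the $(\mathrm{HH})$ block. Following the Klainerman--Tataru argument (as invoked for Lemma \ref{lem:bilinStr}), one decomposes the two high-frequency factors into pieces $P_c \varphi^1_{k_1}$, $P_{c'} \varphi^2_{k_2}$ over caps $c, c' \in \calC_{\ell, k}$ of radius $2^\ell$ with $\ell$ to be chosen; because the output frequency is $\aeq 2^{k_0}$, only pairs of caps that are within $O(2^{k_0})$ of being antipodal contribute, so the double sum over $(c,c')$ collapses to an almost-diagonal sum, and the number of relevant caps is controlled. Applying Cauchy--Schwarz in the cap index together with the $L^2$ almost-orthogonality of the $P_c$'s, and then H\"older in $(t,x)$ to split $L^2_{t,x} = L^4_t L^\infty_x \cdot L^4_t L^2_x$ (or a symmetric variant), one bounds the block by the product of a Klainerman--Tataru-type square-function norm of $\varphi^1_{k_1}$ — which is exactly controlled by $\nrm{\varphi^1_{k_1}}_{S^0_{k_1}}$ via Lemma \ref{lem:basicProp4Sk}, or here directly by the $L^4_t \dot H^{3/4}_x$ norm after optimizing $\ell$ — and a plain Strichartz or energy norm of $\varphi^2_{k_2}$ controlled by $\nrm{\varphi^2_{k_2}}_{S^0_{k_2}}$. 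The choice $\ell = k_0$ (the smallest allowed) is what produces the sharp power of $2$; one must check that the resulting exponent of $2^{k_1}$ (and $2^{k_2}$) combines with $2^{\sgm k_0^+}$ to leave a summable expression, which is precisely where the hypothesis $\sgm > -1/2$ enters: the gain in the high-high interaction is $2^{(k_0 - k_1)/2}$-type, and $\sgm > -1/2$ guarantees that even after paying $2^{\sgm k_0}$ for low output frequencies $k_0 \le 0$ there remains a net exponential decay in $k_1 - k_0$.

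After the single-block estimate, the summation is routine: sum in $k_0$ first (geometric series in $k_0 - k_1$, convergent by $\sgm > -1/2$), which replaces $\varphi^1_{k_1}$ by a $2^{(3/4)k_1}$-weighted $L^4_t L^2_x$ norm and $\varphi^2_{k_2}$ by its $S^0_{k_2}$ norm; then use $|k_1 - k_2| \le 5$ and Lemma \ref{lem:simpleConv} together with Cauchy--Schwarz to sum in $k_1 \aeq k_2$, producing $\nrm{\varphi^1}_{L^4_t \dot H^{3/4}_x} \nrm{\varphi^2}_{S^\gmm}$ on the right. One small point to watch: the first factor carries a homogeneous $\dot H^{3/4}$ norm (so the weight is $2^{(3/4)k_1}$ for all $k_1$, not $2^{(3/4)k_1^+}$), which is consistent because in the $(\mathrm{HH})$ regime $k_1 \aeq k_2$ ranges over all of $\bbZ$ and the true constraint is that the output be low; the $L^4_t$-in-time structure is exactly what Lemma \ref{lem:lowFreqWEst}-type reasoning (Bernstein in the low output frequency) supplies, and there is no loss at low frequencies. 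The main obstacle is the single-block bilinear estimate — getting the cap decomposition, the antipodal collapse, and the optimization in $\ell$ to yield a constant that is genuinely uniform in $(k_0, k_1, k_2)$ and sums; everything after that is bookkeeping.
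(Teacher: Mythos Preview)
Your proposal is correct and follows essentially the same route as the paper: cap decomposition at scale $\ell = k_{0}$, antipodal collapse to an almost-diagonal sum, Cauchy--Schwarz in the cap index, H\"older $L^{2}_{t,x} = L^{4}_{t} L^{2}_{x} \cdot L^{4}_{t} L^{\infty}_{x}$, geometric summation in $k_{0}$ using $\sgm > -1/2$, and Cauchy--Schwarz with Lemma~\ref{lem:simpleConv} in $k_{1} \aeq k_{2}$.

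One small point of confusion worth flagging: in your middle paragraph you assign the Klainerman--Tataru square-function norm (the $L^{4}_{t} L^{\infty}_{x}$ piece) to $\varphi^{1}$ and claim it is controlled by $L^{4}_{t} \dot{H}^{3/4}_{x}$. That assignment is backwards: the $L^{4}_{t} L^{\infty}_{x}$ square function needs the full $S^{0}_{k}$ norm (this is exactly what Lemma~\ref{lem:basicProp4Sk} provides), and a bare $L^{4}_{t} \dot{H}^{3/4}_{x}$ bound is not enough to control it. The $L^{4}_{t} L^{2}_{x}$ piece, by contrast, only needs $L^{2}$ almost-orthogonality of the $P_{c}$'s and reduces to $\nrm{\varphi^{1}_{k_{1}}}_{L^{4}_{t} L^{2}_{x}}$, which after the $2^{(3/4)k_{1}}$ weighting gives the $L^{4}_{t} \dot{H}^{3/4}_{x}$ norm. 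You have the correct assignment in your final paragraph (``$\varphi^{1}_{k_{1}}$ by a $2^{(3/4)k_{1}}$-weighted $L^{4}_{t} L^{2}_{x}$ norm and $\varphi^{2}_{k_{2}}$ by its $S^{0}_{k_{2}}$ norm''), so the argument closes, but be sure to keep this straight in the write-up. Also, your concern about ``paying $2^{\sgm k_{0}}$ for low output frequencies $k_{0} \leq 0$'' is a non-issue: since the weight is $\brk{\nb}^{\sgm}$ (inhomogeneous), the factor is $\aeq 1$ for $k_{0} \leq 0$, and the constraint $\sgm > -1/2$ only enters in the regime $0 < k_{0} \leq k_{1}$ where the combined exponent $2^{(1/2 + \sgm)(k_{0} - k_{1})}$ must be summable.
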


\begin{proof} 
For notational convenience, we shall omit writing $I$.  Using triangle, we may estimate
\begin{align*}
\nrm{\sum_{(k_{0}, k_{1}, k_{2}) \in \mathrm{HH} }  \brk{\nb}^{\sgm} P_{k_{0}} (\varphi^{1}_{k_{1}} \varphi^{2}_{k_{2}})}_{L^{2}_{t,x}} 
\aleq & \sum_{k_{1}, k_{2}: \abs{k_{1} - k_{2}} \leq 5 } \bb( \sum_{k_{0} \leq \min \set{k_{1}, k_{2}}-5} 2^{\sgm k^{+}_{0}}  \nrm{P_{k_{0}}(\varphi^{1}_{k_{1}} \varphi^{2}_{k_{2}}) }_{L^{2}_{t,x}} \bb)
\end{align*}

In order to proceed, let us further decompose 
\begin{equation*}
\varphi^{1}_{k_{1}} = \sum_{c_{1} \in \calC_{k_{0}, k_{1}} } P_{c_{1}} \varphi^{1}_{k_{1}}, \quad
\varphi^{2}_{k_{2}} = \sum_{c_{2} \in \calC_{k_{0}, k_{2}} } P_{c_{2}} \varphi^{2}_{k_{2}}.
\end{equation*}

Observe that $P_{k_{0}} (P_{c_{1}} \varphi^{1}_{k_{1}} P_{c_{2}} \varphi^{2}_{k_{2}})$ is non-vacuous only if, say, $\dist(c_{1}, - c_{2}) \leq 2^{k_{0}+5}$, where $\dist (A, B) := \inf_{x \in A, y \in B} \abs{x-y}$. Moreover, given $c_{1} \in \calC_{k_{0}, k_{1}}$ there exist only finitely many $c_{2}$'s that satisfy $\dist(c_{1}, - c_{2}) \leq 2^{k_{0}+5}$, with the number bounded by an absolute constant. The same statement holds with $1$ and $2$ interchanged. Therefore, by Cauchy-Schwarz, Lemma \ref{lem:simpleConv} and H\"older, we estimate
\begin{align*}
2^{\sgm k^{+}_{0}} \nrm{P_{k_{0}}(\varphi^{1}_{k_{1}} \varphi^{2}_{k_{2}}) }_{L^{2}_{t,x}} 
= & 2^{\sgm k^{+}_{0}} \nrm{\sum_{\substack{(c_{1}, c_{2}) \in \calC_{k_{0}, k_{1}} \times \calC_{k_{0}, k_{2}} \\ \dist(c_{1}, - c_{2}) \leq 2^{k_{0}+5}}} P_{k_{0}}(P_{c_{1}} \varphi^{1}_{k_{1}} P_{c_{2}} \varphi^{2}_{k_{2}}) }_{L^{2}_{t,x}} \\
\aleq & 2^{\sgm k^{+}_{0}} \nrm{(\sum_{c_{1} \in \calC_{k_{0}, k_{1}}} \abs{P_{c_{1}} \varphi^{1}_{k_{1}}}^{2})^{1/2} \, 
						(\sum_{c_{2} \in \calC_{k_{0}, k_{2}} } \abs{P_{c_{2}} \varphi^{2}_{k_{2}}}^{2} )^{1/2} }_{L^{2}_{t,x}}  \\
\aleq & 2^{\sgm k^{+}_{0}} \nrm{(\sum_{c_{1} \in \calC_{k_{0}, k_{1}}} \abs{P_{c_{1}} \varphi^{1}_{k_{1}}}^{2})^{1/2}}_{L^{4}_{t} L^{2}_{x}} 
					\nrm{(\sum_{c_{2} \in \calC_{k_{0}, k_{2}} } \abs{P_{c_{2}} \varphi^{2}_{k_{2}}}^{2} )^{1/2} }_{L^{4}_{t} L^{\infty}_{x}} 
\end{align*}

By $L^{2}$ almost orthogonality of $P_{c}$'s, \eqref{eq:KTStr4Sk} and the condition $\abs{k_{1} - k_{2}} \leq 5$ , the last line is bounded by
\begin{align*}
	\aleq 2^{(1/2)(k_{0} - k_{1})} 2^{\sgm(k^{+}_{0} - k^{+}_{1})} 2^{(3/4) k_{1}} \nrm{\varphi^{1}_{k_{1}}}_{L^{4}_{t} L^{2}_{x}} 2^{\sgm k^{+}_{2}} \nrm{\varphi^{2}_{k_{2}}}_{S^{0}_{k_{2}}}.
\end{align*}

For $\sgm > -1/2$, we have
\begin{equation*}
	\sum_{k_{0}: k_{0} \leq \min \set{k_{1}, k_{2}} - 5} 2^{(1/2)(k_{0} - k_{1})} 2^{\sgm(k^{+}_{0} - k^{+}_{1})} \aleq 1.
\end{equation*}

Therefore, by Cauchy-Schwarz and Lemma \ref{lem:simpleConv}, the desired conclusion follows. \qedhere
\end{proof}

\section{Estimates for $A_{\mu}$} \label{sec:est4A}
From the Coulomb gauge condition, the components of the gauge potential $A_{\mu}$ satisfy an elliptic system. In \S \ref{subsec:Str4ellipticEq}, a discussion of the structure of this elliptic system will be given, highlighting the properties which will be important for the purpose of proving large data LWP (and thus finite energy GWP as well). In \S \ref{subsec:abstractEst4A}, we give multilinear estimates for estimating $A_{\mu}$. 
 
\subsection{Structure of the elliptic equations for $A_{\mu}$} \label{subsec:Str4ellipticEq}
To first approximation, the elliptic equations take the form $A = \abs{\nb}^{-1} (\phi \covD_{t,x} \phi)$. More precisely, recall from \eqref{eq:CSH-C} that $A_{1}, A_{2}$ solve
\begin{align*}
	\lap A_{1} =& \rd_{2} \Im(\phi \overline{\covD_{t} \phi}), \\
	\lap A_{2} =& - \rd_{1} \Im(\phi \overline{\covD_{t} \phi}).
\end{align*}

Thanks to Lemma \ref{lem:tDuhamel} (twisted Duhamel formula), we need not expand $\covD_{t} \phi = \rd_{t} \phi - i A_{0} \phi$. 
On the other hand, for $A_{0}$, we shall expand $\covD_{j} = \rd_{j} - i A_{j}$. Accordingly, we shall split $A_{0}$ into $A_{0,1} + A_{0,2}$  as follows:
\begin{align*}
	\lap A_{0,1} = & - \rd_{1} \Im(\phi \br{\rd_{2} \phi}) + \rd_{2} \Im(\phi \br{\rd_{1} \phi}) , \\
	\lap A_{0,2} = &- \rd_{1} (A_{2} \abs{\phi}^{2}) + \rd_{2} (A_{1} \abs{\phi}^{2}) .
\end{align*}

Two structural properties of the above system are important for us. The first is that the elliptic equation for $A_{0,1}$ possesses a special cancellation structure, namely a $Q_{12}$-type null structure. This has already been noted by \cite{Huh:2007wm}, who used Wente's inequality to estimate $A_{0}$. For us, this structure is crucial for making the twisted Duhamel's formula (Lemma \ref{lem:tDuhamel}) work; more precisely, it allows us to put $A_{0} \in \mathfrak{A}_{0}^{\gmm}$ in Proposition \ref{prop:tDuhamelEnergy}, whereas other components do not belong to $\mathfrak{A}_{0}^{\gmm}$ in general.
(For the definition of $\mathfrak{A}^{\gmm}_{0}$, see \S \ref{sec:pf4mainThm}.)

The second important property of the above system is its hierarchical structure, which allows us to invert the system for data (i.e., $\phi$ and $\covD_{t} \phi$) of any size. Indeed, note that we may first solve for $A_{1}, A_{2}$ from the knowledge of $\phi$ and $\covD_{t} \phi$, and then use those to solve for $A_{0} = A_{0,1} + A_{0,2}$. The hierarchical structure arises from the fact that we are controlling $\covD_{t} \phi$, instead of $\rd_{t} \phi$, via the twisted Duhamel formula.

\subsection{Multilinear estimates} \label{subsec:abstractEst4A}
Here we derive multilinear estimates for estimating $A_{\mu}$ from the above elliptic system. We remark that the exponents in all the estimates below are chosen so that they are `optimal' as $\gmm \to 3/4$.

We begin by stating the bilinear estimates we shall need. Our first bilinear estimate, which bounds the $\dot{H}^{\bt}_{x}$ norm of $\abs{\nb}^{-1} (\varphi^{1} \varphi^{2})$, gives us a useful control for low frequency part of the product, i.e., $k_{0} < 0$. 
\begin{lemma} \label{lem:abstractEst4A:1}
Let $3/4 < \gmm < 1$ and $I \subset \bbR$ an interval with $\abs{I} \leq 1$.
Then for $\varphi^{1}, \varphi^{2} \in C^{\infty}_{t}(I; \calS_{x})$, we have
\begin{equation}\label{eq:abstractEst4A:1} 
	\nrm{\abs{\nb}^{-1} (\varphi^{1} \varphi^{2})}_{L^{\infty}_{t} (I; \dot{H}^{\bt}_{x})} 
		\aleq \nrm{\varphi^{1}}_{S^{\gmm}(I)} \nrm{\varphi^{2}}_{S^{\gmm-1}(I)}. 
\end{equation}
for $0 < \bt \leq 2(\gmm - 1/2)$. In the case $\gmm = 1$, the same estimate holds for $0 < \bt < 1$.
\end{lemma}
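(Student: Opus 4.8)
The plan is to bound the left-hand side of \eqref{eq:abstractEst4A:1} by decomposing $\varphi^1 \varphi^2$ using the Littlewood-Paley trichotomy into $\mathrm{LH}$, $\mathrm{HL}$, and $\mathrm{HH}$ pieces, and treating each separately via the Klainerman-Tataru-Strichartz norms controlled by $S^0_k$ (Lemma \ref{lem:basicProp4Sk}), together with the Sobolev product rule (Lemma \ref{lem:SobProd}), Bernstein (Lemma \ref{lem:bernstein}), and the bilinear $(\mathrm{HH})$ bound (Lemma \ref{lem:bilinStr}). The key point is that $\bt \leq 2(\gmm - 1/2)$ is exactly the threshold at which the frequency sums converge, so the exponent bookkeeping must be done carefully. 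Since the $\dot{H}^\bt_x$ norm at each fixed time is what we want, I would first take the $L^\infty_t$ norm outside and work at a fixed time $t \in I$, so that $S^0_k$-control degenerates to control of various fixed-time $L^r_x$-type norms coming from \eqref{eq:KTStr4Sk} with $q = \infty$; more precisely $\nrm{\varphi_k}_{L^\infty_t L^r_x} \aleq 2^{(1 - 2/r)k} \nrm{\varphi_k}_{S^0_k(I)}$ for admissible $(\infty, r)$, i.e. $r \geq 2$ — here admissibility $\frac{2}{\infty} \leq \frac12 - \frac1r$ holds for all $r \geq 2$, but we must avoid $(\gmm, \infty)$, so $r < \infty$.

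For the \textbf{high-low and low-high interactions}, say $\mathrm{HL}$ where $k_0 \aeq k_1 \geq k_2 - 5$, I would estimate $\nrm{\abs{\nb}^{-1} P_{k_0}(\varphi^1_{k_1} \varphi^2_{k_2})}_{\dot{H}^\bt_x} \aeq 2^{(\bt - 1) k_0} \nrm{P_{k_0}(\varphi^1_{k_1}\varphi^2_{k_2})}_{L^2_x}$ and then put $\varphi^1_{k_1}$ in $L^{r_1}_x$ and $\varphi^2_{k_2}$ in $L^{r_2}_x$ with $\frac{1}{r_1} + \frac{1}{r_2} = \frac12$ via Hölder, choosing the exponents to extract $2^{\gmm k_1}$ and $2^{(\gmm-1)k_2}$ weights from $S^\gmm$ and $S^{\gmm-1}$ (using Lemma \ref{lem:basicProp4Sk}), leaving a net power of $2$ in $k_0$ and $k_2 - k_1 \leq 0$ that is summable once $\bt - 1 < \gmm - 1$, i.e. $\bt < \gmm$, and on the low-frequency side ($k_0 < 0$) once $\bt > 0$; then Cauchy-Schwarz in $\ell^2_{k_1}$ closes it, using $\ell^1 \subset \ell^2$-type reductions and Lemma \ref{lem:simpleConv}. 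The $\mathrm{LH}$ case is symmetric after relabeling (noting $\varphi^2$ carries the weaker $S^{\gmm-1}$ norm, which only helps since it sits at the higher frequency). Alternatively one can invoke Lemma \ref{lem:SobProd} directly: write $\varphi^1 \in \dot{H}^{\gmm}_x$-ish and $\varphi^2 \in \dot{H}^{\gmm-1}_x$-ish (losing $\eps$ via $\brk{\nb} \to \abs{\nb}$ at low frequencies is harmless after summing dyadically), so $\varphi^1 \varphi^2 \in \dot{H}^{-(1-\gmm)-(1-(\gmm-1))+1}_x = \dot{H}^{2\gmm - 2}_x$... wait, I should instead apply it as $\nrm{\varphi^1 \varphi^2}_{\dot{H}^{\bt - 1}_x} \aleq \nrm{\varphi^1}_{\dot{H}^{\bt_1}_x} \nrm{\varphi^2}_{\dot{H}^{\bt_2}_x}$ with $-(\bt - 1) + \bt_1 + \bt_2 = 1$, $\bt_1 \leq \gmm$, $\bt_2 \leq \gmm - 1$, which forces $\bt \leq 2\gmm - 1 = 2(\gmm - 1/2)$ and all $\bt_j < 1$ — giving exactly the stated range — but this handles only the non-$\mathrm{HH}$ part cleanly since at high frequencies $S^\gmm$ genuinely controls $\dot{H}^\gmm_x$.

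For the \textbf{high-high interaction} $\mathrm{HH}$, where $k_0 \leq \min\{k_1,k_2\} - 5$ and $\abs{k_1 - k_2} \leq 5$, the naive estimate loses too much, so I would invoke Lemma \ref{lem:bilinStr} with a suitable admissible pair $(q, r)$, or argue directly as in the proof of Lemma \ref{lem:bilinVarStr}: decompose each high-frequency factor into caps $P_{c}$ of size $2^{k_0}$ compatible with the output frequency, use the near-orthogonality of caps whose sum lands in the ball of radius $\aeq 2^{k_0}$, apply Cauchy-Schwarz over caps, and then use the square-function Strichartz bound \eqref{eq:KTStr4Sk} to gain the crucial factor $2^{(k_0 - k_1)/2}$ from the cap decomposition. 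This gain, combined with $2^{(\bt - 1)k_0}$ from $\abs{\nb}^{-1}$ (note $k_0$ is the minimum here, so low-frequency behavior of $\abs{\nb}^{-1}$ is the dominant concern and $\bt > 0$ makes $2^{\bt k_0}$ summable downward), and the $2^{\gmm k_1} 2^{(\gmm-1)k_2}$ weights, must leave a power of $2$ in $k_0$ that is positive — this is where $\bt > 0$ is needed — and one summable in $k_1 \aeq k_2$, which needs $\bt - 1 + \gmm + (\gmm - 1) + (\text{loss from Strichartz, } \tfrac34 + \tfrac34 - 1 = \tfrac12 \text{ per factor, i.e. about } 1) < 0$-type condition; tracking this out gives $\bt < 2\gmm - 1/2$ roughly, comfortably including $\bt \leq 2(\gmm - 1/2)$. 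The boundary case $\gmm = 1$, $\bt < 1$ requires excluding $\bt = 1$ because $\abs{\nb}^{-1}$ at low frequency needs strict positivity of the remaining power. I expect the \textbf{main obstacle} to be the $\mathrm{HH}$ interaction at low output frequency: this is precisely where a crude Strichartz or Sobolev-product argument fails and the Klainerman-Tataru cap refinement is essential, and the exponent arithmetic there is the delicate part — one must verify that the $2^{(k_0 - k_1)/2}$ gain is enough, that the power of $2^{k_0}$ stays strictly positive under $\bt > 0$, and that the diagonal sum over $k_1 \aeq k_2$ converges, all simultaneously, which pins the upper endpoint $\bt = 2(\gmm - 1/2)$.
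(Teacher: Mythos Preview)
Your approach would work, but it is considerably more elaborate than what is needed, and in fact you already identify the correct simple argument midway through your second paragraph and then talk yourself out of it. The paper's proof is precisely the Sobolev product rule approach you sketch: at each fixed $t$, one shows
\[
\nrm{\varphi^{1}\varphi^{2}}_{\dot H^{\bt-1}_x}\aleq\nrm{\varphi^{1}}_{H^{(\bt+1)/2}_x}\nrm{\varphi^{2}}_{H^{(\bt-1)/2}_x}
\]
via Lemma~\ref{lem:SobProd}, and the exponent condition $(1-\bt)+\tfrac{\bt+1}{2}+\tfrac{\bt-1}{2}=1$, $\max<1$ is exactly $0<\bt<1$; then the embeddings $S^{\gmm}\subset C_t H^{\gmm}_x$ and $S^{\gmm-1}\subset C_t H^{\gmm-1}_x$ force $\tfrac{\bt+1}{2}\le\gmm$, i.e. $\bt\le 2(\gmm-1/2)$. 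The only subtlety is that $S^{\gmm-1}$ gives \emph{inhomogeneous} $H^{\gmm-1}_x$ control, not $\dot H^{(\bt-1)/2}_x$ (a negative-order homogeneous norm), so the paper splits $\varphi^{2}=P_{\le 0}\varphi^{2}+P_{>0}\varphi^{2}$: for the low piece use $\nrm{P_{\le 0}\varphi^{2}}_{L^{2}_x}$ and the product rule with exponents $(1-\bt,\bt,0)$; for the high piece use $\nrm{P_{>0}\varphi^{2}}_{\dot H^{(\bt-1)/2}_x}\aleq\nrm{\varphi^{2}}_{H^{(\bt-1)/2}_x}$ directly.

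Your misgiving that the product rule ``handles only the non-$\mathrm{HH}$ part cleanly'' is unfounded: Lemma~\ref{lem:SobProd} is a global product estimate and already contains the $\mathrm{HH}$ case (indeed, the naive $\mathrm{HH}$ bound $\nrm{P_{k_{0}}(\varphi^{1}_{k_{1}}\varphi^{2}_{k_{2}})}_{L^{2}_x}\aleq 2^{k_{0}}\nrm{\varphi^{1}_{k_{1}}}_{L^{2}_x}\nrm{\varphi^{2}_{k_{2}}}_{L^{2}_x}$ via Bernstein already sums in $k_{0}$ for $\bt>0$ and in $k_{1}\sim k_{2}$ for $\bt\le 2\gmm-1$, with no cap gain required). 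The Klainerman--Tataru refinement you propose for $\mathrm{HH}$ is unnecessary here; it is genuinely needed in the paper only for the later $L^{4}_t\dot H^{3/4}_x$ and $L^{2}_t L^{\infty}_x$ estimates (Lemma~\ref{lem:abstractEst4A:2}), where time integrability has to be traded for spatial regularity and the naive $\mathrm{HH}$ bound no longer closes.
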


Our next set of bilinear estimates take into account the Strichartz estimates and improves on the regularity and integrability in $x$ by integrating in time.
\begin{lemma} \label{lem:abstractEst4A:2}
Let $\gmm > 3/4$ and $I \subset \bbR$ an interval such that $\abs{I} \leq 1$. Then for $\varphi^{1}, \varphi^{2} \in C^{\infty}_{t} (I; \calS_{x})$, we have
\begin{align}
	\nrm{\abs{\nb}^{-1} (\varphi^{1} \varphi^{2})}_{L^{4}_{t} (I; \dot{H}^{3/4}_{x})} 
		& \aleq \nrm{\varphi^{1}}_{S^{\gmm}(I)}\nrm{\varphi^{2}}_{S^{\gmm-1}(I)}, \label{eq:abstractEst4A:2} \\
	\nrm{\abs{\nb}^{-1} (\varphi^{1} \varphi^{2})}_{L^{2}_{t} (I; L^{\infty}_{x})} 
		& \aleq \nrm{\varphi^{1}}_{S^{\gmm}(I)}\nrm{\varphi^{2}}_{S^{\gmm-1}(I)}. \label{eq:abstractEst4A:3} 
\end{align}
\end{lemma}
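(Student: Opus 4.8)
The plan is to prove both estimates by Littlewood-Paley trichotomy, decomposing $\varphi^{1} \varphi^{2} = \sum P_{k_{0}}(\varphi^{1}_{k_{1}} \varphi^{2}_{k_{2}})$ over the regions $\mathrm{LH}$, $\mathrm{HL}$, $\mathrm{HH}$, and in each case estimating the relevant dyadic piece using H\"older in space-time together with the Strichartz-type bounds packaged in Lemma \ref{lem:basicProp4Sk} (i.e.\ \eqref{eq:KTStr4Sk}). The gain of a time integration (we only ask for $L^{4}_{t}$ or $L^{2}_{t}$, not $L^{\infty}_{t}$) is what buys us the extra $1/4$ of spatial regularity over the trivial bound $\abs{\nb}^{-1}$ would give from Lemma \ref{lem:abstractEst4A:1}; concretely, for \eqref{eq:abstractEst4A:2} I would use that $S^{\gmm}(I) \hookrightarrow L^{q}_{t}(I; L^{r}_{x})$ for the $L^{4}_{t} L^{\infty-}_{x}$-type endpoints allowed by Lemma \ref{lem:basicProps4S}(2), and for \eqref{eq:abstractEst4A:3} I would use $L^{4}_{t} L^{\infty}_{x}$-type norms controlled by $S^{0}_{k}$ via the Klainerman--Tataru square-function bound. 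The scaling bookkeeping: on a product of inputs at frequencies $2^{k_{1}} \sim 2^{k_{2}}$ (or $2^{k_{1}} \gg 2^{k_{2}}$ etc.), $\abs{\nb}^{-1}$ at output frequency $2^{k_{0}}$ contributes $2^{-k_{0}}$, and the point is that the exponents have been chosen so that all three dyadic sums over $(k_{0},k_{1},k_{2})$ converge geometrically precisely when $\gmm > 3/4$.

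In more detail: in the $\mathrm{HL}$ region, $k_{0} \sim k_{1} \geq k_{2}$, and I would put the high-frequency factor $\varphi^{1}_{k_{1}}$ in an $L^{q_{1}}_{t} L^{r_{1}}_{x}$ Strichartz norm controlled by $2^{-\gmm k_{1}^{+}}\nrm{\varphi^{1}}_{S^{\gmm}}$ and the low factor $\varphi^{2}_{k_{2}}$ in a dual Strichartz (or $L^{\infty}_{t} L^{2}_{x}$) norm controlled by $2^{-(\gmm-1) k_{2}^{+}}\nrm{\varphi^{2}}_{S^{\gmm-1}}$, using Bernstein on the low factor to absorb powers of $2^{k_{2}}$; summing in $k_{2} \leq k_{1}$ and then in $k_{1}$ works because $\gmm-1 < 0 < \gmm$ and because the $\abs{\nb}^{-1}$ together with the requested $3/4$ (resp.\ the $L^{\infty}_{x}$ cost) leaves a strictly negative power of $2^{k_{1}}$ when $\gmm > 3/4$. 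The $\mathrm{LH}$ region is symmetric but asymmetric in the roles of $\varphi^{1}, \varphi^{2}$; one has to be slightly more careful there since $\varphi^{2}$ carries only $\gmm-1$ derivatives, but the $\abs{\nb}^{-1}$ at the (now low-ish relative to nothing, but high relative to $k_{1}$) output frequency $k_{0} \sim k_{2}$ still saves the day. The $\mathrm{HH}$ region, where $k_{0} \leq \min\{k_{1},k_{2}\}-5$ and $k_{1} \sim k_{2}$, is the delicate one: a crude H\"older estimate loses too much, so I would invoke the bilinear refinement Lemma \ref{lem:bilinStr} (for \eqref{eq:abstractEst4A:2}, applied with an appropriate Strichartz pair and $\sgm$ chosen so that $-2+\frac{4}{r}+\frac{4}{q} < \sgm < 0$) and Lemma \ref{lem:bilinVarStr} (for \eqref{eq:abstractEst4A:3}, with $\sgm > -1/2$), which are precisely the ``double Strichartz'' tools introduced for this purpose; these convert the high-high cascade into a geometrically convergent sum in $k_{0} \leq \min\{k_{1},k_{2}\}$ while keeping the $\ell^{2}$-summability in $k_{1} \sim k_{2}$ needed to close against $\nrm{\varphi^{1}}_{S^{\gmm}}\nrm{\varphi^{2}}_{S^{\gmm-1}}$.

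The main obstacle I anticipate is the $\mathrm{HH}$ interaction for \eqref{eq:abstractEst4A:3}, i.e.\ landing the product in $L^{2}_{t} L^{\infty}_{x}$ after applying $\abs{\nb}^{-1}$: the $L^{\infty}_{x}$ norm at the low output frequency $2^{k_{0}}$ costs a Bernstein factor $2^{k_{0}}$, which together with $\abs{\nb}^{-1}$ is neutral, so the $k_{0}$-summation has no help from the multiplier and must come entirely from the Klainerman--Tataru angular gain inside Lemma \ref{lem:bilinVarStr}; one has to check that $\sgm = 0$ (or whatever exponent the $L^{\infty}_{x}$ forces) is indeed $> -1/2$ and that the residual powers of $2^{k_{1}-k_{0}}$ are summable, which is exactly where $\gmm > 3/4$ enters sharply. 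A secondary but routine nuisance is keeping track of $k^{+}$ versus $k$ in the low-frequency regime $k_{i} \leq 0$, where one exploits $\abs{I} \leq 1$ and Lemma \ref{lem:lowFreqWEst} to trade the unfavorable $\abs{\nb}^{-1}$ at low frequencies for a time integration, exactly as in the proof of Lemma \ref{lem:homEnergy:2}.
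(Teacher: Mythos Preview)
Your overall plan---Littlewood--Paley trichotomy, Strichartz plus H\"older for the $\mathrm{LH}$ and $\mathrm{HL}$ interactions, and a bilinear Klainerman--Tataru improvement for $\mathrm{HH}$---matches the paper's proof of \eqref{eq:abstractEst4A:2} essentially verbatim. One small simplification: for the low output frequencies $k_{0}\leq 0$ the paper does not use Lemma~\ref{lem:lowFreqWEst} at all, but simply quotes the $L^{\infty}_{t}\dot{H}^{1/2}_{x}$ bound already established in Lemma~\ref{lem:abstractEst4A:1} and applies H\"older in time, which immediately handles both target norms.

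There is, however, a real gap in your treatment of the $\mathrm{HH}$ case of \eqref{eq:abstractEst4A:3}. You propose Lemma~\ref{lem:bilinVarStr} followed by Bernstein, and you correctly observe that Bernstein $L^{2}_{x}\to L^{\infty}_{x}$ at frequency $2^{k_{0}}$ exactly cancels $\abs{\nb}^{-1}$, so the $k_{0}$-sum must be closed by the Klainerman--Tataru gain $2^{(1/2)(k_{0}-k_{1})}$. That sum is indeed fine; the problem is the remaining $k_{1}\sim k_{2}$ sum. The dyadic estimate behind Lemma~\ref{lem:bilinVarStr} places one input in $L^{4}_{t}L^{2}_{x}$ and the other in the $S^{0}_{k}$-controlled $L^{4}_{t}L^{\infty}_{x}$ square-function norm; after summing in $k_{0}$ you are left with $\nrm{\varphi^{1}_{k_{1}}}_{L^{4}_{t}L^{2}_{x}}\cdot 2^{(3/4)k_{2}}\nrm{\varphi^{2}_{k_{2}}}_{S^{0}_{k_{2}}}$ (or its symmetric variant), which converts to a weight $2^{(7/4-2\gmm)k_{1}}$ against $\nrm{\varphi^{1}}_{S^{\gmm}}\nrm{\varphi^{2}}_{S^{\gmm-1}}$ and is summable only for $\gmm>7/8$. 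The root cause is that $L^{4}_{t}L^{2}_{x}$ is \emph{not} a wave-admissible Strichartz norm in two space dimensions, so one of the two inputs carries no dispersive gain.

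The paper avoids this by using Lemma~\ref{lem:bilinStr} (not Lemma~\ref{lem:bilinVarStr}) for the $\mathrm{HH}$ case of \eqref{eq:abstractEst4A:3} as well: choose an admissible pair $(q,r)$ just off the endpoint, namely $2/q=1/2-\eps$ and $4/r=\eps$, so that \emph{both} inputs sit in genuine Strichartz norms and the output lands in $L^{q/2}_{t}L^{r/2}_{x}$; then pass to $L^{2}_{t}L^{\infty}_{x}$ by H\"older in time and a Sobolev embedding costing only $2^{\eps k_{0}}$. The resulting high-frequency weight is $2^{(3/2+\eps-2\gmm)k_{1}}$, which is summable for every $\gmm>3/4$ once $\eps$ is taken small enough.
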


For estimating $A_{0}$, we will use the following bilinear estimate which makes use of the $Q_{12}$-type null structure. This may be thought of as a variant of Wente's inequality.
\begin{lemma}[Wente-type inequality] \label{lem:nullStr4A0}
Let $3/4 < \gmm < 7/4$ and $I \subset \bbR$ an interval with $\abs{I} \leq 1$. Then for $\varphi^{1}, \varphi^{2} \in C^{\infty}_{t}(I; \calS_{x})$, we have
\begin{equation} \label{eq:abstractEst4A:4}
\nrm{(-\lap)^{-1} \bb( \rd_{1} (\varphi^{1} \rd_{2} \varphi^{2}) - \rd_{2} (\varphi^{1} \rd_{1} \varphi^{2}) \bb)}_{L^{4}_{t} (I; \dot{H}^{2(\gmm-1/2)+1/4}_{x})} \aleq_{\bt} \nrm{\varphi^{1}}_{S^{\gmm}(I)} \nrm{\varphi^{2}}_{S^{\gmm-1}(I)}.
\end{equation}

More generally, for $ 3/4 \leq \bt \leq 2(\gmm-1/2)+1/4$, we have
\begin{equation} \label{eq:abstractEst4A:5}
\nrm{(-\lap)^{-1} \bb( \rd_{1} (\varphi^{1} \rd_{2} \varphi^{2}) - \rd_{2} (\varphi^{1} \rd_{1} \varphi^{2}) \bb)}_{L^{4}_{t} (I; \dot{H}^{\bt}_{x})} \aleq_{\bt} \nrm{\varphi^{1}}_{S^{\gmm}(I)} \nrm{\varphi^{2}}_{S^{\gmm-1}(I)}.
\end{equation}
\end{lemma}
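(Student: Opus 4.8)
\textbf{Proof proposal for Lemma \ref{lem:nullStr4A0}.}

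The plan is to exploit the $Q_{12}$-type null structure
\[
	\rd_{1}(\varphi^{1} \rd_{2} \varphi^{2}) - \rd_{2}(\varphi^{1} \rd_{1} \varphi^{2})
	= \rd_{1} \varphi^{1} \, \rd_{2} \varphi^{2} - \rd_{2} \varphi^{1} \, \rd_{1} \varphi^{2},
\]
which is the key observation going back to \cite{Huh:2007wm}. The point is that after applying $(-\lap)^{-1}$, one derivative on the output can be traded: schematically, the symbol of $(-\lap)^{-1}$ acting on a product with output frequency $k_{0}$ and inputs at frequencies $k_{1}\aeq k_{2}$ (in the $(\mathrm{HH})$ regime) or $k_{0}\aeq k_{\max}$ (in the $(\mathrm{LH})$ and $(\mathrm{HL})$ regimes) behaves like $2^{-2k_{0}}$, while the bilinear form $\rd_{i}\varphi^{1}\rd_{j}\varphi^{2} - \rd_{j}\varphi^{1}\rd_{i}\varphi^{2}$ vanishes when the two inputs are frequency-parallel, so the worst case $(\mathrm{HH})$ with $k_{0}$ small is exactly where we gain. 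First I would reduce \eqref{eq:abstractEst4A:4} to \eqref{eq:abstractEst4A:5} (the former is the endpoint $\bt = 2(\gmm-1/2)+1/4$ of the latter), and then by interpolation and the monotonicity of the homogeneous norm on the relevant frequency ranges it suffices to prove \eqref{eq:abstractEst4A:5} at the two endpoints $\bt = 3/4$ and $\bt = 2(\gmm-1/2)+1/4$. Actually it is cleaner to prove it directly for all admissible $\bt$ via Littlewood-Paley trichotomy.

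The main step is the Littlewood-Paley decomposition into $(\mathrm{LH})$, $(\mathrm{HL})$ and $(\mathrm{HH})$ pieces. In the $(\mathrm{HL})$ and $(\mathrm{LH})$ regimes (say $k_{1}\aeq k_{0}\gtrsim k_{2}$), the null form does not obviously help, but here $(-\lap)^{-1}$ only costs $2^{-2k_{0}}$ with $k_{0}$ comparable to the highest frequency, so there is no low-frequency loss; one simply puts $\rd \varphi^{1}_{k_{1}}$ in a Strichartz norm like $L^{4}_{t}L^{\infty}_{x}$ controlled via \eqref{eq:KTStr4Sk} / the $S^{0}_{k}$ definition, and $\rd\varphi^{2}_{k_{2}}$ in $L^{\infty}_{t}L^{2}_{x}$, summing the resulting geometric series in $k_{2}$ using $\gmm - 1 > -1/4$ and in $k_{1}$ using that $\bt \le 2\gmm - 3/4 < 2\gmm - 1/2$. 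In the $(\mathrm{HH})$ regime $k_{1}\aeq k_{2} \gtrsim k_{0}$, I would invoke the bilinear Strichartz estimate of Lemma \ref{lem:bilinStr} (or its variant Lemma \ref{lem:bilinVarStr}), which is precisely tailored to $(\mathrm{HH})$ interactions: after extracting the two derivatives ($2^{k_{1}}2^{k_{2}}$ total) and the $2^{-2k_{0}}$ from $(-\lap)^{-1}$ and $2^{\bt k_{0}^{+}}$ from the output norm, the estimate \eqref{eq:bilinStr} with an appropriately chosen Strichartz pair $(q,r)$ (one should take $r$ slightly below $\infty$, $q$ slightly above $4$, so that $\sgm = \bt - 2$ falls in the window $(-2+\tfrac{4}{r}+\tfrac{4}{q}, 0)$) converts the product into $2^{\gmm_{0} k_{1}}\nrm{\varphi^{1}_{k_{1}}}_{S^{0}_{k_{1}}} 2^{\gmm_{0} k_{2}}\nrm{\varphi^{2}_{k_{2}}}_{S^{0}_{k_{2}}}$ with $\gmm_{0} = 1 - \tfrac{2}{r} - \tfrac{1}{q}$ close to $3/4$. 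The remaining powers of $2^{k_{0}}$ assemble, for $\bt \ge 3/4$, into a convergent sum over $k_{0} \le \min\{k_{1},k_{2}\} - 5$ (this is where the lower bound $\bt \ge 3/4$ is used), leaving $\sum_{k_{1}\aeq k_{2}} 2^{(\gmm_{0} - \gmm')k_{1}}\cdots$ which is handled by Cauchy--Schwarz in $k_{1}$ and Lemma \ref{lem:simpleConv}, using $\gmm_{0} < \gmm$ and $\gmm_{0} - 1 < \gmm - 1$; one must also check the behavior across the frequency thresholds $k_{i} = 0$ where $k^{+}$ kicks in, but this only improves matters.

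The hard part will be bookkeeping the exponents in the $(\mathrm{HH})$ regime so that (a) the chosen admissible pair $(q,r)$ makes $\sgm=\bt-2$ land in the open interval required by Lemma \ref{lem:bilinStr} for the full range $3/4 \le \bt \le 2(\gmm-1/2)+1/4$ — note $\sgm$ ranges over $[-5/4, 2\gmm - 15/4)$, so one needs $2\gmm - 15/4 < 0$, i.e. $\gmm < 15/8$, which is weaker than the hypothesis $\gmm<7/4$, and one needs $-2 + \tfrac4r + \tfrac4q < -5/4$, forcing $\tfrac4r+\tfrac4q < \tfrac34$, comfortably achievable near $(q,r)=(4,\infty)$ — and (b) the $k_{0}$-summation converges, which is exactly the condition $\bt > 2\gmm_{0} - 2 + \text{(gain from null form)}$; the null form's frequency gain is what pushes the threshold from the naive $2\gmm_{0}-1/2$ (which would be $\approx 1$, too large) down to $\approx 3/4$. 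I would double-check this gain by writing the null symbol as $\xi_{1}\wedge\xi_{2} = |\xi_{1}||\xi_{2}|\sin\angle(\xi_{1},\xi_{2})$ and using the finitely-overlapping cap decomposition $\calC_{k_{0},k_{i}}$ of the inputs: frequency-localized to caps of radius $2^{k_{0}}$, the angle is $\lesssim 2^{k_{0}-k_{1}}$, giving an extra factor $2^{k_{0}-k_{1}}$ that exactly accounts for the discrepancy and makes the whole argument close. Everything else — approximation by Schwartz functions, the reduction among the $\bt$ values, the elementary geometric sums — is routine.
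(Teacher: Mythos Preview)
Your overall strategy is correct and ultimately equivalent to the paper's, but the implementation differs in an instructive way. You expand the null form as $Q_{12}(\varphi^{1},\varphi^{2}) = \rd_{1}\varphi^{1}\,\rd_{2}\varphi^{2} - \rd_{2}\varphi^{1}\,\rd_{1}\varphi^{2}$ and plan to extract the angular gain $\abs{\xi^{1}\wedge\xi^{2}} \aleq 2^{k_{0}+k_{\min}}$ in the $(\mathrm{HH})$ regime via the cap decomposition. The paper instead uses the elementary identity
\[
	\rd_{1}(\varphi^{1}\rd_{2}\varphi^{2}) - \rd_{2}(\varphi^{1}\rd_{1}\varphi^{2})
	= -\rd_{1}(\rd_{2}\varphi^{1}\,\varphi^{2}) + \rd_{2}(\rd_{1}\varphi^{1}\,\varphi^{2}),
\]
which allows the \emph{inner} derivative to be placed on whichever factor has lower frequency, while the outer derivative combines with $(-\lap)^{-1}$ into a harmless $\abs{\nb}^{-1}$. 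This is where the null structure is actually used in the paper --- in $(\mathrm{LH})$/$(\mathrm{HL})$, not in $(\mathrm{HH})$. Once the derivative sits on the low-frequency input, Cases 1 and 2 become symmetric and the $k$-sums close exactly under $\gmm < 7/4$; Case 3 $(\mathrm{HH})$ then needs \emph{no} null-form argument at all and is simply a rerun of Case~3 of \eqref{eq:abstractEst4A:2} (indeed strictly easier). The paper also dispatches \eqref{eq:abstractEst4A:5} in one line by interpolating \eqref{eq:abstractEst4A:4} with the already-proved \eqref{eq:abstractEst4A:2}, rather than redoing the trichotomy for every $\bt$.

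At the level of frequency weights your route and the paper's coincide (both give a factor $2^{k_{0}+k_{\min}}$ in place of $2^{k_{1}+k_{2}}$), so your argument can be made to work. But it is heavier: to realize the angular gain in $(\mathrm{HH})$ you must open up the proof of Lemma~\ref{lem:bilinStr} and combine it with the null symbol bound on caps, rather than citing it as a black box. Two smaller points: your arithmetic $\sgm \in [-5/4, 2\gmm - 15/4)$ should read $[-5/4, 2\gmm - 11/4]$ (so the constraint $\sgm<0$ fails for $\gmm \geq 11/8$, which is within the stated range --- another indication that Lemma~\ref{lem:bilinStr} cannot be applied directly as you describe); and you omit the low-output case $k_{0}\leq 0$, which the paper handles separately via \eqref{eq:abstractEst4A:1} and H\"older in time.
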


We now turn to the proofs. The basic strategy for proving \eqref{eq:abstractEst4A:1}--\eqref{eq:abstractEst4A:4} is to employ the Littlewood-Paley decomposition. In particular, we will take extra care for low frequencies to ensure that the stronger $\dot{H}^{\gmm-1}_{x}$ norm (instead of $H^{\gmm-1}_{x}$) is not used.


%
%

\begin{proof}[Proof of \eqref{eq:abstractEst4A:1}]
Fix a time $t \in I$. We claim that for $0 < \bt <1$, we have
\begin{equation} \label{eq:abstractEst4A:1:pf:1}
	\nrm{\abs{\nb}^{-1} (\varphi^{1} \varphi^{2})(t)}_{\dot{H}^{\bt}_{x}} 
	\aleq \nrm{\varphi^{1}(t)}_{H^{\frac{\bt+1}{2}}_{x}} \nrm{\varphi^{2}(t)}_{H^{\frac{\bt-1}{2}}_{x}}
\end{equation}

In what follows, we shall omit writing $t$ for convenience. We divide into two case:
\pfstep{Case 1: $k_{2} \leq 0$} 
In this case, note that $\nrm{P_{\leq 0} \varphi^{2}}_{L^{2}_{x}} \aleq \nrm{\varphi^{2}}_{H^{\frac{\bt-1}{2}}_{x}}$. Then by Lemma \ref{lem:SobProd}, 
\begin{equation*}
\nrm{\varphi^{1} P_{\leq 0} \varphi^{2}}_{\dot{H}^{\bt-1}_{x}} 
\aleq \nrm{\varphi^{1}}_{\dot{H}^{\bt}_{x}} \nrm{\varphi^{2}}_{L^{2}_{x}}
 \aleq \nrm{\varphi^{1}}_{H^{\frac{\bt+1}{2}}_{x}} \nrm{\varphi^{2}}_{H^{\frac{\bt-1}{2}}_{x}}.
\end{equation*}

\pfstep{Case 2: $k_{2} > 0$} 
In this case, we have $\nrm{P_{> 0} \varphi^{2}}_{\dot{H}^{\bt-1}_{x}} \aleq \nrm{\varphi^{2}}_{H^{\bt-1}_{x}}$. Again by Lemma \ref{lem:SobProd},
\begin{equation*}
\nrm{\varphi^{1} P_{> 0} \varphi^{2}}_{\dot{H}^{\bt-1}_{x}} 
 \aleq \nrm{\varphi^{1}}_{\dot{H}^{\frac{\bt+1}{2}}_{x}} \nrm{\varphi^{2}}_{\dot{H}^{\frac{\bt-1}{2}}_{x}}
  \aleq \nrm{\varphi^{1}}_{H^{\frac{\bt+1}{2}}_{x}} \nrm{\varphi^{2}}_{H^{\frac{\bt-1}{2}}_{x}}.
\end{equation*}

From \eqref{eq:abstractEst4A:1:pf:1}, the desired estimate \eqref{eq:abstractEst4A:1} follows immediately. \qedhere
\end{proof}

%
%

\begin{proof}[Proof of \eqref{eq:abstractEst4A:2}]
Below, we shall suppress the time interval $I$. The fact that $\abs{I} \leq 1$ will be freely used to throw away any positive power of $\abs{I}$ arising from H\"older in time.

For $P_{\leq 0} (\abs{\nb}^{-1} \varphi^{1} \varphi^{2})$, the desired estimate follows from the $L^{\infty}_{t} \dot{H}^{1/2}_{x}$ estimate in \eqref{eq:abstractEst4A:1} and H\"older in time. Thus, it suffices to consider $k_{0} > 0$. We divide into three cases according to Littlewood-Paley trichotomy.

\pfstep{Case 1: (LH) interaction, $k_{0} > 0$, $k_{1} \leq k_{2} +5$ and $\abs{k_{0} - k_{2}} \leq 5$} 
By $L^{2}$ almost orthogonality of $P_{k}$'s and Lemma \ref{lem:simpleConv}, we have
\begin{align*}
\nrm{\sum_{(k_{0}, k_{1}, k_{2}) \in \mathrm{LH}, k_{0} > 0} P_{k_{0}}(\varphi^{1}_{k_{1}} \varphi^{2}_{k_{2}})}_{L^{4}_{t} \dot{H}^{-1/4}_{x}}
\aleq &\bb( \sum_{k_{0} > 0} 2^{-k_{0}/2} \nrm{\sum_{k_{1}, k_{2} : (k_{0}, k_{1}, k_{2}) \in \mathrm{LH}} P_{k_{0}}(\varphi^{1}_{k_{1}} \varphi^{2}_{k_{2}})}_{L^{4}_{t} L^{2}_{x}}^{2} \bb)^{1/2} \\
\aleq &\bb( \sum_{k_{2} > -5} 2^{-k_{2}/2} \nrm{\sum_{k_{1} \leq k_{2}+5} \varphi^{1}_{k_{1}} \varphi^{2}_{k_{2}}}^{2}_{L^{4}_{t} L^{2}_{x}} \bb)^{1/2}
\end{align*}

We estimate each summand of the above $\ell^{2}$ sum by triangle, H\"older and Strichartz as follows:
\begin{align*}
2^{-k_{2}/4} \nrm{\sum_{k_{1}: k_{1} \leq k_{2}+5}  \varphi^{1}_{k_{1}} \varphi^{2}_{k_{2}}}_{L^{4}_{t} L^{2}_{x}}
\aleq & 2^{-k_{2}/4} \sum_{k_{1}: k_{1} \leq k_{2}+5} \nrm{\varphi^{1}_{k_{1}}}_{L^{4}_{t} L^{\infty}_{x}} \nrm{\varphi^{2}_{k_{2}}}_{L^{\infty}_{t} L^{2}_{x}} \\
\aleq & \nrm{\varphi^{1}}_{S^{\gmm}} \, ( 2^{-k_{2}/4} \nrm{\varphi^{2}_{k_{2}}}_{L^{\infty}_{t} L^{2}_{x}} ) \sum_{k_{1} \leq k_{2}+5} 2^{(3/4) k_{1}} 2^{-\gmm k^{+}_{1}}
\end{align*}

As $\gmm > 3/4$, the $k_{1}$ sum is uniformly bounded by $\aleq 1$ in $k_{2}$. Thus summing up in $k_{2} > -5$,
\begin{equation*}
\nrm{\sum_{(k_{0}, k_{1}, k_{2}) \in \mathrm{LH}, k_{0} > 0} P_{k_{0}}(\varphi^{1}_{k_{1}} \varphi^{2}_{k_{2}})}_{L^{4}_{t} \dot{H}^{-1/4}_{x}}
\aleq \nrm{\varphi^{1}}_{S^{\gmm}} \nrm{\varphi^{2}}_{S^{-1/4}}
\aleq \nrm{\varphi^{1}}_{S^{\gmm}} \nrm{\varphi^{2}}_{S^{\gmm-1}},
\end{equation*}
where we have used again $\gmm > 3/4$ for the last inequality. 

\pfstep{Case 2: (HL) interaction, $k_{0} > 0$, $k_{2} \leq k_{1} +5$ and $\abs{k_{0} - k_{1}} \leq 5$}
This is more favorable than Case 1, as we are allowed to use more derivatives to control the high frequency factor. We leave the easy details to the reader.

\pfstep{Case 3: (HH) interaction, $0 < k_{0} \leq \min\set{k_{1}, k_{2}} - 5$ and $\abs{k_{1} - k_{2}} \leq 5$}
Here the fractional integration $\abs{\nb}^{-1/4}$ unfavorable and we need the full power of the bilinear Strichartz estimate Lemma \ref{lem:bilinStr}.
We begin by applying triangle to bound
\begin{align*}
\nrm{\sum_{(k_{0}, k_{1}, k_{2}) \in \mathrm{HH}, k_{0} > 0} P_{k_{0}}(\varphi^{1}_{k_{1}} \varphi^{2}_{k_{2}})}_{L^{4}_{t} \dot{H}^{-1/4}_{x}}
\aleq &\sum_{k_{1}, k_{2} > 5, \abs{k_{1} - k_{2}} \leq 5} \nrm{\sum_{0 < k_{0} \leq \min \set{k_{1}, k_{2}} - 5} \abs{\nb}^{-1/4} P_{k_{0}} (\varphi^{1}_{k_{1}} \varphi^{2}_{k_{2}})}_{L^{4}_{t} L^{2}_{x}}
\end{align*}

We then estimate each summand by Lemma \ref{lem:bilinStr} as follows:
\begin{align*}
& \nrm{\sum_{k_{0}: 0 < k_{0} \leq \min \set{k_{1}, k_{2}} - 5} \abs{\nb}^{-1/4} P_{k_{0}} (\varphi^{1}_{k_{1}} \varphi^{2}_{k_{2}})}_{L^{4}_{t} L^{2}_{x}} \\
& \quad \aleq  \sum_{k_{0}: 0 < k_{0} \leq \min \set{k_{1}, k_{2}} - 5} 2^{k_{1}/4} \nrm{\varphi^{1}_{k_{1}}}_{S^{0}_{k_{1}}} 2^{k_{2}/4} \nrm{\varphi^{2}_{k_{2}}}_{S^{0}_{k_{2}}} \\
& \quad \aleq  2^{(3/2-\gmm+\eps)k_{1}} \nrm{\varphi^{1}_{k_{1}}}_{S^{0}_{k_{1}}} 2^{(\gmm-1)k_{2}} \nrm{\varphi^{2}_{k_{2}}}_{S^{0}_{k_{2}}}.
\end{align*}
where $\eps > 0$ may be arbitrarily small. Then by Cauchy-Schwarz and Lemma \ref{lem:simpleConv}, we obtain
\begin{align*}
\nrm{\sum_{(k_{0}, k_{1}, k_{2}) \in \mathrm{HH}, k_{0} > 0} P_{k_{0}}(\varphi^{1}_{k_{1}} \varphi^{2}_{k_{2}})}_{L^{4}_{t} \dot{H}^{-1/4}_{x}}
\aleq & \nrm{\varphi^{1}}_{S^{3/2-\gmm+\eps}} \nrm{\varphi^{2}}_{S^{\gmm-1}}
\aleq \nrm{\varphi^{1}}_{S^{\gmm}} \nrm{\varphi^{2}}_{S^{\gmm-1}},
\end{align*}
where we used $\gmm > 3/4$ in the last inequality. \qedhere
 \end{proof}

%
%

\begin{proof}[Proof of \eqref{eq:abstractEst4A:3}]
For $\abs{\nb}^{-1} P_{\leq 0} (\varphi^{1} \varphi^{2})$, the desired estimate again follows from the $L^{\infty}_{t} \dot{H}^{1/2}_{x}$ estimate in \eqref{eq:abstractEst4A:1} and H\"older in time. Hence, we may assume $k_{0} > 0$.

\pfstep{Case 1: (LH) interaction, $k_{0} > 0$, $k_{1} \leq k_{2} +5$ and $\abs{k_{0} - k_{2}} \leq 5$} 
Proceeding as in the previous proof, but this time using triangle instead of almost orthogonality (which is false for $L^{\infty}_{x}$), we have
\begin{align*}
	\nrm{\sum_{(k_{0}, k_{1}, k_{2}) \in \mathrm{LH}, k_{0} > 0} \abs{\nb}^{-1} P_{k_{0}}(\varphi^{1}_{k_{1}} \varphi^{2}_{k_{2}})}_{L^{2}_{t} L^{\infty}_{x}}
	\aleq & \sum_{k_{2} > -5} 2^{-k_{2}} \nrm{\sum_{k_{1}: k_{1} \leq k_{2}+5} \varphi^{1}_{k_{1}} \varphi^{2}_{k_{2}}}_{L^{2}_{t} L^{\infty}_{x}}
\end{align*}

We estimate each summand as follows:
\begin{align*}
2^{-k_{2}} \nrm{\sum_{k_{1}: k_{1} \leq k_{2}+5} \varphi^{1}_{k_{1}} \varphi^{2}_{k_{2}}}_{L^{2}_{t} L^{\infty}_{x}}
\aleq \nrm{\varphi^{1}}_{S^{\gmm}} 2^{-k_{2}/4}\nrm{\varphi^{2}_{k_{2}}}_{S^{0}_{k_{2}}} \sum_{k_{1}: k_{1} \leq k_{2}+5} 2^{3/4 k_{1}} 2^{-\gmm k^{+}_{1}}.
\end{align*}

As $\gmm > 3/4$, the $k_{1}$ sum is uniformly bounded by $\aleq 1$ in $k_{2}$. Summing up in $k_{2} > -5$,
\begin{align*}
\nrm{\sum_{(k_{0}, k_{1}, k_{2}) \in \mathrm{LH}, k_{0} > 0} \abs{\nb}^{-1} P_{k_{0}}(\varphi^{1}_{k_{1}} \varphi^{2}_{k_{2}})}_{L^{2}_{t} L^{\infty}_{x}}
\aleq \nrm{\varphi^{1}}_{S^{\gmm}} \sum_{k_{2} > -5} 2^{-k_{2}/4}\nrm{\varphi^{2}_{k_{2}}}_{S^{0}_{k_{2}}}  
\aleq \nrm{\varphi^{1}}_{S^{\gmm}} \nrm{\varphi^{2}}_{S^{\gmm-1}} 
\end{align*}
where we used $\gmm > 3/4$ again in the last inequality.

\pfstep{Case 2: (HL) interaction, $k_{0} > 0$, $k_{2} \leq k_{1} +5$ and $\abs{k_{0} - k_{1}} \leq 5$}
As in the previous proof, this case is more favorable than Case 1. We leave the details of this case to the reader.

\pfstep{Case 3: (HH) interaction, $0 < k_{0} \leq \min\set{k_{1}, k_{2}} - 5$ and $\abs{k_{1} - k_{2}} \leq 5$}
Let $\eps > 0$ be a small positive number, and let $2/q = 1/2 - \eps$, $4/r = \eps$. Applying triangle, H\"older in time and Sobolev, we estimate
\begin{align*}
& \nrm{\sum_{(k_{0}, k_{1}, k_{2}) \in \mathrm{HH}, k_{0} > 0} \abs{\nb}^{-1} P_{k_{0}}(\varphi^{1}_{k_{1}} \varphi^{2}_{k_{2}})}_{L^{2}_{t} L^{\infty}_{x}} \\
& \quad \aleq \sum_{k_{1}, k_{2} > 5, \abs{k_{1} - k_{2}} \leq 5} \bb( \sum_{0 < k_{0} \leq \min \set{k_{1}, k_{2}} - 5} 2^{\eps k_{0}} \nrm{ \abs{\nb}^{-1} P_{k_{0}} (\varphi^{1}_{k_{1}} \varphi^{2}_{k_{2}})}_{L^{q/2}_{t} L^{r/2}_{x}} \bb)
\end{align*}

Using Lemma \ref{lem:bilinStr} , we estimate the $k_{0}$ sum as follows:
\begin{align*}
& \sum_{k_{0}: 0 < k_{0} \leq \min \set{k_{1}, k_{2}} - 5} 2^{\eps k_{0}} \nrm{ \abs{\nb}^{-1} P_{k_{0}} (\varphi^{1}_{k_{1}} \varphi^{2}_{k_{2}})}_{L^{q/2}_{t} L^{r/2}_{x}} \\
& \quad \aleq  \sum_{k_{0}: 0 < k_{0} \leq \min \set{k_{1}, k_{2}} - 5} 2^{\eps k_{0}} 2^{k_{1}/4} \nrm{\varphi^{1}_{k_{1}}}_{S^{0}_{k_{1}}} 2^{k_{2}/4} \nrm{\varphi^{2}_{k_{2}}}_{S^{0}_{k_{2}}} \\
& \quad \aleq  2^{(3/2-\gmm+\eps)k_{1}} \nrm{\varphi^{1}_{k_{1}}}_{S^{0}_{k_{1}}} 2^{(\gmm-1)k_{2}} \nrm{\varphi^{2}_{k_{2}}}_{S^{0}_{k_{2}}}.
\end{align*}

By Cauchy-Schwarz and Lemma \ref{lem:simpleConv}, we obtain
\begin{align*}
\nrm{\sum_{(k_{0}, k_{1}, k_{2}) \in \mathrm{HH}, k_{0} > 0} \abs{\nb}^{-1} P_{k_{0}}(\varphi^{1}_{k_{1}} \varphi^{2}_{k_{2}})}_{L^{2}_{t} L^{\infty}_{x}}
\aleq & \nrm{\varphi^{1}}_{S^{3/2-\gmm+\eps}} \nrm{\varphi^{2}}_{S^{\gmm-1}}
\aleq \nrm{\varphi^{1}}_{S^{\gmm}} \nrm{\varphi^{2}}_{S^{\gmm-1}},
\end{align*}
where we used $\gmm > 3/4$ in the last inequality. \qedhere
\end{proof} 

%
%

\begin{proof}[Proof of \eqref{eq:abstractEst4A:4} and \eqref{eq:abstractEst4A:5}]
For any $\bt \in \bbR$, we have
\begin{equation*}
\nrm{(-\lap)^{-1} \big( \rd_{1} (\varphi^{1} \rd_{2}  \varphi^{2}) + \rd_{2} ( \varphi^{1} \rd_{1} \varphi^{2}) \big)}_{L^{4}_{t} \dot{H}^{\bt}_{x}}
\aleq \sum_{j=1,2} \nrm{\abs{\nb}^{-1} (\varphi^{1} \rd_{j} \varphi^{2})}_{L^{4}_{t} \dot{H}^{\bt}_{x}}.
\end{equation*}

As a consequence, \eqref{eq:abstractEst4A:5} follows from interpolating \eqref{eq:abstractEst4A:2} and \eqref{eq:abstractEst4A:4}. Thus we are only left with the task of proving \eqref{eq:abstractEst4A:4}.

Our proof will be a variant of that of \eqref{eq:abstractEst4A:2}. An important difference in the present case is that the bilinear form
\begin{equation*}
	\rd_{1} (\varphi^{1} \rd_{2} \varphi^{2}) - \rd_{2} (\varphi^{1} \rd_{1} \varphi^{2}) 
\end{equation*}
has a special cancellation structure. Indeed, from $(\rd_{1} \rd_{2} - \rd_{2} \rd_{1}) (\varphi^{1} \varphi^{2}) = 0$, we have
\begin{equation} \label{eq:nullform4A0}
	\rd_{1} (\varphi^{1} \rd_{2} \varphi^{2}) - \rd_{2} (\varphi^{1} \rd_{1} \varphi^{2}) 
	= - \rd_{1} (\rd_{2} \varphi^{1}  \varphi^{2}) + \rd_{2} (\rd_{1} \varphi^{1} \varphi^{2}).
\end{equation}

This allows us to move the derivative to always fall on the lower frequency piece.

The case $k_{0} \leq 0$ again follows from the $L^{\infty}_{t} \dot{H}^{1/2}_{x}$ estimate in \eqref{eq:abstractEst4A:1} and H\"older in time, so it suffices to consider $k_{0} > 0$. As usual, we divide into three cases according to Littlewood-Paley trichotomy. 

\pfstep{Case 1: (LH) interaction, $k_{0} > 0$, $k_{1} \leq k_{2} +5$ and $\abs{k_{0} - k_{2}} \leq 5$} 
Using \eqref{eq:nullform4A0}, we shall move the derivative to fall on $\varphi^{1}_{k_{1}}$. Then by $L^{2}$ almost orthogonality of $P_{k}$'s and Lemma \ref{lem:simpleConv}, we have
\begin{align}
& \nrm{\sum_{(k_{0}, k_{1}, k_{2}) \in \mathrm{LH}, k_{0} > 0} (-\lap)^{-1} P_{k_{0}} \big( \rd_{1} (\rd_{2} \varphi^{1}_{k_{1}}  \varphi^{2}_{k_{2}}) - \rd_{2} (\rd_{1} \varphi^{1}_{k_{1}} \varphi^{2}_{k_{2}}) \big)}_{L^{4}_{t} \dot{H}^{2(\gmm-1/2)+1/4}_{x}} \label{eq:abstractEst4A:4:pf:1} \\
& \quad \aleq \sum_{j=1,2} \bb( \sum_{k_{2} > -5} 2^{(4\gmm-7/2) k_{2}} \nrm{\sum_{k_{1} \leq k_{2}+5} \rd_{j} \varphi^{1}_{k_{1}} \varphi^{2}_{k_{2}}}_{L^{4}_{t} L^{2}_{x}}^{2} \bb)^{1/2} \notag
\end{align}

For each $j=1,2$, we estimate each summand of the above $\ell^{2}$ sum by triangle, H\"older and Strichartz as follows:
\begin{align*}
2^{(2\gmm - 7/4)k_{2}} \nrm{\sum_{k_{1}: k_{1} \leq k_{2}+5} \rd_{j} \varphi^{1}_{k_{1}} \varphi^{2}_{k_{2}}}_{L^{4}_{t} L^{2}_{x}}
\aleq & \sum_{k_{1}: k_{1} \leq k_{2}+5} 2^{k_{1}} 2^{(2\gmm - 7/4)k_{2}} \nrm{\varphi^{1}_{k_{1}}}_{L^{4}_{t} L^{\infty}_{x}} \nrm{\varphi^{2}_{k_{2}}}_{L^{\infty}_{t} L^{2}_{x}} \\
\aleq & \nrm{\varphi^{1}}_{S^{\gmm}} 2^{\gmm k_{2}} \nrm{\varphi^{2}_{k_{2}}}_{L^{\infty}_{t} L^{2}_{x}} \sum_{k_{1}: k_{1} \leq k_{2} + 5} 2^{(7/4)k_{1}} 2^{-\gmm k_{1}^{+}} 2^{(\gmm - 7/4) k_{2}}  
\end{align*}

Since $\gmm < 7/4$, the $k_{1}$ sum is bounded by $\aleq 1$ uniformly in $k_{2}$.  Summing up in $k_{2} > -5$, we may estimate \eqref{eq:abstractEst4A:4:pf:1} by $\aleq \nrm{\varphi^{1}}_{S^{\gmm}} \nrm{\varphi^{2}}_{S^{\gmm}}$ as desired.  

\pfstep{Case 2: (HL) interaction, $k_{0} > 0$, $k_{2} \leq k_{1} +5$ and $\abs{k_{0} - k_{1}} \leq 5$}
Thanks to \eqref{eq:nullform4A0}, and symmetry of the right-hand of \eqref{eq:abstractEst4A:4}, this is reduced to Case 1 by interchanging $\varphi^{1}$ and $\varphi^{2}$.

\pfstep{Case 3: (HH) interaction, $0 < k_{0} \leq \min\set{k_{1}, k_{2}} - 5$ and $\abs{k_{1} - k_{2}} \leq 5$}
This is more favorable than Case 3 in the proof of \eqref{eq:abstractEst4A:2}; in fact, an easy variant of the proof there applies here. We leave the details to the reader. \qedhere
 \end{proof}

%
%


We end this section with a simple trilinear estimate for estimating $A_{0,2}$.
\begin{lemma} \label{lem:abstractEst4A:quadric}
Let $\gmm > 3/4$ and $I \subset \bbR$ a finite interval. For $B, \varphi^{1}, \varphi^{2} \in C^{\infty}_{t}(I, \calS_{x})$, we have
\begin{equation} \label{}
\begin{aligned}
	& \nrm{\abs{\nb}^{-1} (B \varphi^{1} \varphi^{2})}_{L^{\infty}_{t} (I; \dot{H}^{1/2}_{x})}
	+\nrm{\abs{\nb}^{-1} (B \varphi^{1} \varphi^{2})}_{L^{\infty}_{t} (I; \dot{H}^{3/4}_{x})}
	 + \nrm{\abs{\nb}^{-1} (B \varphi^{1} \varphi^{2})}_{L^{\infty}_{t} (I; \dot{H}^{1}_{x} \cap L^{\infty}_{x})} \\
	& \qquad \aleq \nrm{B}_{L^{\infty}_{t}(I, \dot{H}^{1/2}_{x})} 
		\nrm{\varphi^{2}}_{S^{\gmm}(I)} \nrm{\varphi^{3}}_{S^{\gmm}(I)}. 
\end{aligned}
\end{equation}
\end{lemma}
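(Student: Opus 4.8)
The estimate is a soft consequence of the Sobolev product rule (Lemma~\ref{lem:SobProd}) and the embedding $S^{\gmm}(I)\subset C_{t}(I;H^{\gmm}_{x})$ from Lemma~\ref{lem:basicProps4S}; beyond a Besov-type summation for the $L^{\infty}_{x}$ component, no Littlewood--Paley trichotomy is needed. We may prove the estimate with $\gmm$ replaced by any $\gmm'\in(3/4,1)$ --- a formally stronger claim, since $\nrm{\cdot}_{S^{\gmm'}(I)}\le\nrm{\cdot}_{S^{\gmm}(I)}$ by \eqref{eq:Def4Sgmm} --- and so assume from the outset that $3/4<\gmm<1$; put $\eps_{0}:=2\gmm-3/2>0$. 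Since every norm on the left-hand side is $L^{\infty}_{t}$ of a spatial norm of the pointwise-in-$t$ product $B\varphi^{1}\varphi^{2}$, it suffices to prove, for each fixed $t\in I$, the fixed-time bound
\[
\nrm{\abs{\nb}^{-1}(B\varphi^{1}\varphi^{2})}_{\dot{H}^{s}_{x}}\aleq\nrm{B}_{\dot{H}^{1/2}_{x}}\,\nrm{\varphi^{1}}_{H^{\gmm}_{x}}\,\nrm{\varphi^{2}}_{H^{\gmm}_{x}},\qquad 1/2\le s\le 1+\eps_{0},
\]
and then to take the supremum over $t\in I$, bounding $\nrm{\varphi^{j}(t)}_{H^{\gmm}_{x}}\aleq\nrm{\varphi^{j}}_{S^{\gmm}(I)}$. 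The three homogeneous outputs required in the lemma correspond to $s=1/2,3/4,1$, all of which lie in $[1/2,1+\eps_{0}]$.

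To establish the fixed-time bound I would apply Lemma~\ref{lem:SobProd} twice. First, with exponents $(\bt_{0},\bt_{1},\bt_{2})=(1-2\gmm,\gmm,\gmm)$ --- admissible since $\gmm<1$ --- the inhomogeneous form of the product rule gives $\nrm{\varphi^{1}\varphi^{2}}_{H^{2\gmm-1}_{x}}\aleq\nrm{\varphi^{1}}_{H^{\gmm}_{x}}\nrm{\varphi^{2}}_{H^{\gmm}_{x}}$; because $2\gmm-1>1/2>0$, this self-improves to $\nrm{\varphi^{1}\varphi^{2}}_{\dot{H}^{\sgm}_{x}}\aleq\nrm{\varphi^{1}}_{H^{\gmm}_{x}}\nrm{\varphi^{2}}_{H^{\gmm}_{x}}$ for every $0\le\sgm\le 2\gmm-1$. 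Second, with exponents $(\bt_{0},\bt_{1},\bt_{2})=(1/2-\sgm,1/2,\sgm)$ --- admissible for $0\le\sgm\le 2\gmm-1<1$ --- the homogeneous product rule gives $\nrm{B\varphi^{1}\varphi^{2}}_{\dot{H}^{\sgm-1/2}_{x}}\aleq\nrm{B}_{\dot{H}^{1/2}_{x}}\nrm{\varphi^{1}\varphi^{2}}_{\dot{H}^{\sgm}_{x}}$. As $\sgm$ ranges over $[0,2\gmm-1]$ the output exponent $\sgm-1/2$ ranges over $[-1/2,\eps_{0}]$, and applying $\abs{\nb}^{-1}$ returns exactly the range $1/2\le s\le 1+\eps_{0}$. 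The point of this arrangement is that $\varphi^{1}\varphi^{2}$ is placed only in \emph{nonnegative}-order homogeneous spaces and $B$ enters only through $\nrm{B}_{\dot{H}^{1/2}_{x}}$, so no low-frequency divergence can occur.

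The one genuinely non-routine point is the $L^{\infty}_{x}$ component, since $\dot{H}^{1}_{x}\not\hookrightarrow L^{\infty}_{x}$ on $\bbR^{2}$: here I would use Bernstein and Cauchy--Schwarz, bounding
\[
\nrm{\abs{\nb}^{-1}(B\varphi^{1}\varphi^{2})}_{L^{\infty}_{x}}\aleq\sum_{k\in\bbZ}2^{k}\nrm{P_{k}\abs{\nb}^{-1}(B\varphi^{1}\varphi^{2})}_{L^{2}_{x}},
\]
then summing the range $k\le 0$ against the $\dot{H}^{1/2}_{x}$ bound and the range $k>0$ against the $\dot{H}^{1+\eps_{0}}_{x}$ bound --- both geometric series converging thanks to $\eps_{0}>0$. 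This margin $\eps_{0}=2\gmm-3/2$ is precisely what $\gmm>3/4$ provides: the factor $\varphi^{1}\varphi^{2}$ carries $2\gmm-1>1/2$ derivatives, of which $1/2$ are spent multiplying by $B\in\dot{H}^{1/2}_{x}$ and $1$ is recovered by $\abs{\nb}^{-1}$, leaving $2\gmm-3/2>0$ to spare --- a margin that closes as $\gmm\to 3/4$, in agreement with the optimality remark preceding the lemma. Taking $\sup_{t\in I}$ as above then yields the stated trilinear estimate.
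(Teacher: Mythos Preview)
Your proof is correct and follows essentially the same route as the paper's: fix $t$, apply the Sobolev product rule (Lemma~\ref{lem:SobProd}) twice to reach $\dot{H}^{s}_{x}$ for $s$ in a range straddling $1$, then handle the $L^{\infty}_{x}$ component by a Besov-type argument. The only cosmetic difference is that the paper invokes Gagliardo--Nirenberg (interpolating its $\dot{H}^{1/2}_{x}$ and $\dot{H}^{1+2\eps}_{x}$ bounds) in place of your explicit dyadic Bernstein summation, which amounts to the same thing.
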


\begin{proof} 
Fix $t \in I$; in what follows we shall suppress writing $t$, with the understanding that each function is evaluated at $t$. 
%
%
The $L^{\infty}_{t} \dot{H}^{1/2}_{x}$ estimate follows from
\begin{equation*}
\nrm{\abs{\nb}^{-1} (B \varphi^{1} \varphi^{2})}_{\dot{H}^{1/2}_{x}} \aleq \nrm{B}_{\dot{H}^{1/2}_{x}} \nrm{\varphi^{1}}_{\dot{H}^{1/2}_{x}} \nrm{\varphi^{2}}_{\dot{H}^{1/2}_{x}},
\end{equation*}
which in turn can be established using Lemma \ref{lem:SobProd}. Then by interpolation and Gagliardo-Nirenberg, the $L^{\infty} \dot{H}^{3/4}_{x}$ and $L^{\infty}_{t} (\dot{H}^{1}_{x} \cap L^{\infty}_{x})$ estimates would follow if we prove
\begin{equation*} 
	\nrm{B \varphi^{1} \varphi^{2}}_{\dot{H}^{2\eps}_{x}} \aleq \nrm{B}_{\dot{H}^{1/2}_{x}} \nrm{\varphi^{1}}_{\dot{H}^{3/4+\eps}_{x}} \nrm{\varphi^{2}}_{\dot{H}^{3/4+\eps}_{x}}
\end{equation*}
for some $\eps > 0$. Choosing $0 < \eps < \min \set{1/4, \gmm - 3/4}$, this follows again from Lemma \ref{lem:SobProd}. \qedhere
\end{proof}

\section{Estimates for $\phi$} \label{sec:est4phi}
In \S \ref{subsec:Str4wave}, we will briefly recall the wave equation for $\phi$, and apply the twisted Duhamel formula to recast it in an integral form. 
Motivated by this, in \S \ref{subsec:abstractEst4phi} we will state and prove multilinear estimates for estimating $\phi$ and $\covD_{t} \phi$.

\subsection{Structure of the wave equation for $\phi$} \label{subsec:Str4wave}
From \eqref{eq:CSH-C}, recall that the wave equation for $\phi$ may be written as
\begin{equation*}
\Box \phi + i \rd_{t} (A_{0} \phi) = m \phi + 2 i A^{\ell} \rd_{\ell} \phi - i A_{0} \covD_{t} \phi + A^{\ell} A_{\ell} \phi + W(\phi).
\end{equation*}

Applying the twisted Duhamel formula (Lemma \ref{lem:tDuhamel}), the above equation may be put in an integral form as follows:
\begin{equation*} 
\begin{aligned}
	\phi(t,x) 
	= &\cos t \abs{\nb} \phi(0, x) + \frac{\sin t \abs{\nb}}{\abs{\nb}} \covD_{t} \phi(0, x) + i \int_{0}^{t} \cos (t-t') \abs{\nb} (A_{0} \phi)(t', x) \, \ud t'\\
	& - \int_{0}^{t} \frac{\sin (t-t') \abs{\nb}}{\abs{\nb}} \bb(  m \phi + 2 i A^{\ell} \rd_{\ell} \phi - i A_{0} \covD_{t} \phi + A^{\ell} A_{\ell} \phi + V(\phi) \bb) (t', x) \, \ud t'.
\end{aligned}
\end{equation*}

Moreover, for $\covD_{t} \phi$, we have
\begin{equation*}
\begin{aligned}
	\covD_{t} \phi(t,x) 
	= &- \abs{\nb} \sin t \abs{\nb} \phi(0, x) + \cos t \abs{\nb} \covD_{t} \phi(0, x) - i \int_{0}^{t} \sin (t-t') \abs{\nb} (\abs{\nb} (A_{0} \phi))(t', x) \, \ud t'\\
	& - \int_{0}^{t} \cos (t-t') \abs{\nb} \bb( m \phi + 2 i A^{\ell} \rd_{\ell} \phi - i A_{0} \covD_{t} \phi + A^{\ell} A_{\ell} \phi + V(\phi) \bb) (t', x) \, \ud t'.
\end{aligned}
\end{equation*}

From the above formulae we see that in order to estimate $(\phi, \covD_{t} \phi)$ in $S^{\gmm} \times S^{\gmm-1}$, it suffices to put $\Box \phi + i \rd_{t} (A_{0}  \phi)$ in $L^{1}_{t} H^{\gmm-1}_{x}$ and $A_{0} \phi$ in $L^{1}_{t} H^{\gmm}_{x}$. This motivates the multilinear estimates that we prove in the following subsection.


\subsection{Multilinear estimates} \label{subsec:abstractEst4phi}
We start with an estimate that will be useful for treating the trilinear\footnote{The terms $A^{\ell} \rd_{\ell} \phi$, $A_{0} \covD_{t} \phi$ and $A_{0} \phi$ are trilinear in $\phi$, as $A_{1}, A_{2}$ and $A_{0,1}$ are obtained by solving elliptic equations with a term quadratic in $\phi$ on the right-hand side. By a similar consideration, $A^{\ell} A_{\ell} \phi$, $A_{0,2} \covD_{t} \phi$ and $A_{0,2} \phi$ are quintilinear in $\phi$.} interactions $A^{\ell} \rd_{\ell} \phi$, $A_{0} \covD_{t} \phi$ and $A_{0} \phi$, and also the quintilinear interactions $A_{0,2} \covD_{t} \phi$ and $A_{0,2} \phi$.

\begin{lemma} \label{lem:abstractEst4phi}
Let $3/4 < \gmm < 7/4$ and $I \subset \bbR$ a time interval such that $\abs{I} \leq 1$. Then for $B, B_{0}, \varphi \in C_{t}^{\infty}(I, \calS_{x})$, we have
\begin{align} 
\nrm{B \varphi}_{L^{2}_{t}(I; H^{\gmm-1}_{x})} 
\aleq & (\nrm{B}_{L^{2}_{t} (I; L^{\infty}_{x})} + \nrm{B}_{L^{4}_{t} (I; \dot{H}^{3/4}_{x})})\nrm{\varphi}_{S^{\gmm-1}(I)}, \label{eq:abstractEst4phi:1} \\
\nrm{B_{0} \varphi}_{L^{2}_{t}(I; H^{\gmm}_{x})} 
\aleq & (\nrm{B_{0}}_{L^{2}_{t} (I; L^{\infty}_{x})} + \nrm{B_{0}}_{L^{4}_{t}(I;\dot{H}^{3/4}_{x} \cap \dot{H}^{\gmm}_{x})})\nrm{\varphi}_{S^{\gmm}(I)}. \label{eq:abstractEst4phi:2}
\end{align}
\end{lemma}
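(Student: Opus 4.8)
The plan is to prove both estimates by a Littlewood--Paley decomposition of the product, combined with the bilinear machinery already assembled. First I reduce to a dyadic statement: by Plancherel in the $(t,x)$ variables,
$\nrm{B\varphi}_{L^{2}_{t}H^{\gmm-1}_{x}}^{2}\aeq\sum_{k_{0}}2^{2(\gmm-1)k_{0}^{+}}\nrm{P_{k_{0}}(B\varphi)}_{L^{2}_{t,x}}^{2}$,
and I decompose $P_{k_{0}}(B\varphi)$ according to the trichotomy $\mathrm{LH}\cup\mathrm{HL}\cup\mathrm{HH}$. In each region one estimates a dyadic piece $2^{(\gmm-1)k_{0}^{+}}\nrm{P_{k_{0}}(B_{k_{1}}\varphi_{k_{2}})}_{L^{2}_{t,x}}$ by H\"older in time in Strichartz form, either $L^{2}_{t,x}\subset L^{4}_{t}L^{2}_{x}\cdot L^{4}_{t}L^{\infty}_{x}$ or $L^{2}_{t,x}\subset L^{2}_{t}L^{\infty}_{x}\cdot L^{\infty}_{t}L^{2}_{x}$, then inserts the finite band property, the bound $\nrm{\varphi_{k}}_{L^{4}_{t}L^{\infty}_{x}}\aleq 2^{3k/4}\nrm{\varphi_{k}}_{S^{0}_{k}}$ of Lemma \ref{lem:basicProp4Sk}, and the hypotheses on $B$, and finally carries out the frequency summation; the standing assumption $\abs{I}\leq 1$ is used freely to trade a high time-Lebesgue exponent for a lower one.

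The easy regions: the $\mathrm{HH}$ interaction follows immediately from Lemma \ref{lem:bilinVarStr} applied with $\sgm=\gmm-1>-1/2$ (note $\brk{\nb}^{\gmm-1}$ controls the $H^{\gmm-1}_{x}$ norm), producing exactly the $\nrm{B}_{L^{4}_{t}\dot H^{3/4}_{x}}\nrm{\varphi}_{S^{\gmm-1}}$ part of the right-hand side --- this is why that lemma was recorded. For $\mathrm{LH}$, where $B$ sits at the low frequency, one puts $B_{<k_{2}}$ in $L^{2}_{t}L^{\infty}_{x}$ (using that $P_{<k_{2}}$ is uniformly bounded on $L^{\infty}_{x}$) and $\varphi_{k_{2}}$ in $L^{\infty}_{t}L^{2}_{x}$; since the output frequency is $\aeq 2^{k_{2}}$ and the weight $2^{(\gmm-1)k_{2}^{+}}$ reassembles $\nrm{\varphi}_{S^{\gmm-1}}$, this contributes $\aleq\nrm{B}_{L^{2}_{t}L^{\infty}_{x}}\nrm{\varphi}_{S^{\gmm-1}}$. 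The low-output-frequency range $k_{0}\leq 0$ is handled separately and holistically: there only the low--low parts of $\mathrm{LH}$ and $\mathrm{HL}$ survive besides $\mathrm{HH}$, so $P_{\leq 0}(B\varphi)=P_{\leq 0}(B_{\leq 10}\varphi_{\leq 10})$ plus the $\mathrm{HH}$ piece, and the former is bounded by $\nrm{B_{\leq 10}}_{L^{2}_{t}L^{\infty}_{x}}\nrm{\varphi_{\leq 10}}_{L^{\infty}_{t}L^{2}_{x}}\aleq\nrm{B}_{L^{2}_{t}L^{\infty}_{x}}\nrm{\varphi}_{S^{\gmm-1}}$ by $L^{2}$ almost-orthogonality.

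\textbf{Main obstacle.} The real work is the $\mathrm{HL}$ interaction at high output ($k_{0}\aeq k_{1}>0$), where $B$ is at the high frequency and must be measured in $\dot H^{3/4}_{x}$. Setting $d_{k_{1}}:=\nrm{B_{k_{1}}}_{L^{4}_{t}\dot H^{3/4}_{x}}$ and $c_{k_{2}}:=2^{(\gmm-1)k_{2}^{+}}\nrm{\varphi_{k_{2}}}_{S^{0}_{k_{2}}}$ (so $\nrm{c}_{\ell^{2}}=\nrm{\varphi}_{S^{\gmm-1}}$), the dyadic bound comes out to $2^{(\gmm-7/4)k_{1}}d_{k_{1}}\sum_{k_{2}<k_{1}+5}2^{3k_{2}/4}\nrm{\varphi_{k_{2}}}_{S^{0}_{k_{2}}}$, and the subtlety is that the exponents here are exactly critical as $\gmm\to 3/4$ and $\gmm\to 7/4$: the $k_{2}$-sum contributes at worst a factor $2^{(7/4-\gmm)k_{1}^{+}}$, which cancels the decay $2^{(\gmm-7/4)k_{1}}$ on $B$, leaving a net power $2^{0}$. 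One cannot then finish by Cauchy--Schwarz, since $(\sum_{k_{1}}d_{k_{1}}^{2})^{1/2}$ is \emph{not} controlled by $\nrm{B}_{L^{4}_{t}\dot H^{3/4}_{x}}$ (Minkowski runs the wrong way for the mixed norm $L^{4}_{t}L^{2}_{x}$). The fix is to keep the $\varphi$-sum in convolution form: after extracting $2^{(\gmm-7/4)k_{1}}$ one is left with $d_{k_{1}}\sum_{k_{2}}m_{k_{1}-k_{2}}c_{k_{2}}$, where $m$ is the kernel supported on $j\geq -5$ with $m_{j}=2^{-(7/4-\gmm)j}$ there; this lies in $\ell^{1}(\bbZ)$ precisely because $\gmm<7/4$. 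Hence by Young's inequality $\nrm{m\ast c}_{\ell^{2}}\aleq\nrm{c}_{\ell^{2}}$, and then $\nrm{d_{k_{1}}(m\ast c)_{k_{1}}}_{\ell^{2}_{k_{1}}}\leq(\sup_{k_{1}}d_{k_{1}})\nrm{m\ast c}_{\ell^{2}}\aleq\nrm{B}_{L^{4}_{t}\dot H^{3/4}_{x}}\nrm{\varphi}_{S^{\gmm-1}}$, using that each $\nrm{B_{k_{1}}}_{L^{4}_{t}\dot H^{3/4}_{x}}\leq C\nrm{B}_{L^{4}_{t}\dot H^{3/4}_{x}}$. The part of the $k_{2}$-sum over $k_{2}\leq 0$ is even easier, as $2^{3k_{2}/4}$ is then summable and the residual $2^{(\gmm-7/4)k_{1}}$ genuinely decays.

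\textbf{The second estimate.} Estimate \eqref{eq:abstractEst4phi:2} follows by the same scheme with the regularity raised one notch: $\mathrm{HH}$ uses Lemma \ref{lem:bilinVarStr} with $\sgm=\gmm$; $\mathrm{LH}$ and the low-output range use $\nrm{B_{0}}_{L^{2}_{t}L^{\infty}_{x}}$ together with $\nrm{\varphi}_{S^{0}}\leq\nrm{\varphi}_{S^{\gmm}}$; and in $\mathrm{HL}$ the high-frequency factor $B_{0}$ is measured in $\dot H^{\gmm}_{x}$, which is what lets the output weight $2^{\gmm k_{1}}$ be absorbed cleanly. Here the hypothesis $\gmm>3/4$ is used twice: it makes the $\varphi$-frequency sum $\sum_{k_{2}<k_{1}}2^{3k_{2}/4}\nrm{\varphi_{k_{2}}}_{S^{0}_{k_{2}}}$ converge uniformly in $k_{1}$ to $\aleq\nrm{\varphi}_{S^{\gmm}}$ (the relevant exponent is $3/4-\gmm<0$), and it yields the \emph{non-localized} Strichartz embedding $S^{\gmm}\subset L^{4}_{t}L^{\infty}_{x}$, so that the $\mathrm{HL}$ contribution may be bounded by $\nrm{(\brk{\nb}^{\gmm}B_{0})\varphi}_{L^{2}_{t,x}}\leq\nrm{B_{0}}_{L^{4}_{t}H^{\gmm}_{x}}\nrm{\varphi}_{L^{4}_{t}L^{\infty}_{x}}\aleq\nrm{B_{0}}_{L^{4}_{t}\dot H^{\gmm}_{x}}\nrm{\varphi}_{S^{\gmm}}$ with $B_{0}$ kept whole (derivatives passing to the high factor by the usual paraproduct Leibniz rule), sidestepping the dyadic-reconstruction issue altogether.
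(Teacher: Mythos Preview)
Your strategy is exactly the paper's: Littlewood--Paley trichotomy, with Lemma~\ref{lem:bilinVarStr} handling $\mathrm{HH}$, the $L^{2}_{t}L^{\infty}_{x}\cdot L^{\infty}_{t}L^{2}_{x}$ split for $\mathrm{LH}$, and the $L^{4}_{t}L^{2}_{x}\cdot L^{4}_{t}L^{\infty}_{x}$ split for $\mathrm{HL}$. Two remarks.

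First, you are right to worry about the $\mathrm{HL}$ reconstruction in \eqref{eq:abstractEst4phi:1}. The paper, after bounding the inner $k_{2}$-sum uniformly, simply writes ``evaluating the $\ell^{2}$ sum in $k_{1}$'' to arrive at $\nrm{B}_{L^{4}_{t}\dot H^{3/4}_{x}}$; taken literally this uses $\bigl(\sum_{k_{1}}2^{(3/2)k_{1}}\nrm{B_{k_{1}}}_{L^{4}_{t}L^{2}_{x}}^{2}\bigr)^{1/2}\aleq\nrm{B}_{L^{4}_{t}\dot H^{3/4}_{x}}$, which is Minkowski in the wrong direction. Your fix --- keep the $k_{2}$-sum as a convolution with an $\ell^{1}$ kernel (this is where $\gmm<7/4$ enters) and then use only $\sup_{k_{1}}\nrm{P_{k_{1}}B}_{L^{4}_{t}\dot H^{3/4}_{x}}\aleq\nrm{B}_{L^{4}_{t}\dot H^{3/4}_{x}}$ --- is correct and is a genuine tightening of the paper's argument.

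Second, your ``sidestep'' for $\mathrm{HL}$ in \eqref{eq:abstractEst4phi:2} has a small inaccuracy. The claim that the $\mathrm{HL}$ contribution is bounded by $\nrm{(\brk{\nb}^{\gmm}B_{0})\varphi}_{L^{2}_{t,x}}$ amounts to $\nrm{T_{\mathrm{HL}}(F,\varphi)}_{L^{2}}\aleq\nrm{F\varphi}_{L^{2}}$ with $F=\brk{\nb}^{\gmm}B_{0}$, and paraproducts are not in general dominated by the product in $L^{2}$. What does work, and yields your stated bound, is to carry out the $\ell^{2}_{k_{1}}$ sum \emph{inside} the time integral before H\"older: using $\abs{\varphi_{\leq k_{1}+5}(t,x)}\aleq\nrm{\varphi(t)}_{L^{\infty}_{x}}$ and almost orthogonality,
\begin{equation*}
\sum_{k_{1}>0}2^{2\gmm k_{1}}\nrm{B_{0,k_{1}}\varphi_{\leq k_{1}+5}}_{L^{2}_{t,x}}^{2}
\aleq\int_{I}\nrm{\varphi(t)}_{L^{\infty}_{x}}^{2}\nrm{B_{0}(t)}_{\dot H^{\gmm}_{x}}^{2}\,\ud t
\leq\nrm{\varphi}_{L^{4}_{t}L^{\infty}_{x}}^{2}\nrm{B_{0}}_{L^{4}_{t}\dot H^{\gmm}_{x}}^{2},
\end{equation*}
and then $S^{\gmm}\subset L^{4}_{t}L^{\infty}_{x}$ for $\gmm>3/4$ finishes the job. (The paper's $\mathrm{HL}$ argument for \eqref{eq:abstractEst4phi:2} has the same Minkowski looseness as in \eqref{eq:abstractEst4phi:1}; this square-function route repairs it.)
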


\begin{proof} [Proof of \eqref{eq:abstractEst4phi:1}]
In what follows, we shall suppress writing $I$. As before, we shall decompose the output and two inputs into Littlewood-Paley pieces, and consider three cases according to Littlewood-Paley trichotomy.

\pfstep{Case 1. (LH) interaction, $k_{1} \leq k_{2}+5$ and $\abs{k_{0} - k_{2}} \leq 5$}
Using $L^{2}$ almost orthogonality of $P_{k}$'s, Lemma \ref{lem:simpleConv} and H\"older, we estimate
\begin{align*}
\nrm{\sum_{(k_{0}, k_{1}, k_{2}) \in \mathrm{LH}} P_{k_{0}} (B_{k_{1}} \varphi_{k_{2}})}_{L^{2}_{t} H^{\gmm-1}_{x}}
\aleq & \bb( \sum_{k_{2} > -5} 2^{2(\gmm-1) k^{+}_{2}}\nrm{B_{\leq k_{2}+5} \varphi_{k_{2}}}^{2}_{L^{2}_{t,x}} \bb)^{1/2} \\
\aleq & \nrm{B}_{L^{2}_{t} L^{\infty}_{x}} \bb( \sum_{k_{2} > -5} 2^{2(\gmm-1) k^{+}_{2}}\nrm{\varphi_{k_{2}}}^{2}_{L^{\infty}_{t} L^{2}_{x}} \bb)^{1/2} \\
\aleq & \nrm{B}_{L^{2}_{t} L^{\infty}_{x}} \nrm{\varphi}_{S^{\gmm-1}}.
\end{align*}

\pfstep{Case 2. (HL) interaction, $k_{2} \leq k_{1}+5$ and $\abs{k_{0} - k_{1}} \leq 5$}
Using $L^{2}$ almost orthogonality of $P_{k}$'s and Lemma \ref{lem:simpleConv}, we estimate
\begin{align*}
\nrm{\sum_{(k_{0}, k_{1}, k_{2}) \in \mathrm{HL}} P_{k_{0}} (B_{k_{1}} \varphi_{k_{2}})}_{L^{2}_{t} H^{\gmm-1}_{x}}
\aleq & \bb( \sum_{k_{1} > -5} 2^{2(\gmm-1) k^{+}_{1}}\nrm{\sum_{k_{2} \leq k_{1}+5} B_{k_{1}} \varphi_{k_{2}}}^{2}_{L^{2}_{t,x}} \bb)^{1/2}.
\end{align*}

We estimate each summand of the above $\ell^{2}$ sum by H\"older and Strichartz as follows:
\begin{align*}
2^{(\gmm-1) k^{+}_{1}}\nrm{\sum_{k_{2}:k_{2} \leq k_{1}+5} B_{k_{1}} \varphi_{k_{2}}}_{L^{2}_{t,x}}
\aleq &  2^{(3/4)k_{1}} \nrm{B_{k_{1}}}_{L^{4}_{t} L^{2}_{x}} \nrm{\varphi}_{S^{\gmm-1}}  \\
& \times	\sum_{k_{2}: k_{2} \leq k_{1} +5} 2^{-(3/4)k_{1}} 2^{(\gmm-1) k^{+}_{1}} 2^{(3/4)k_{2}} 2^{(1-\gmm) k^{+}_{2}} 
\end{align*}

As $\gmm < 7/4$, the $k_{2}$ sum is bounded by $\aleq 1$ uniformly in $k_{1}$. Evaluating the $\ell^{2}$ sum in $k_{1}$, we conclude
\begin{equation*}
\nrm{\sum_{(k_{0}, k_{1}, k_{2}) \in \mathrm{HL}} P_{k_{0}} (B_{k_{1}} \varphi_{k_{2}})}_{L^{2}_{t} H^{\gmm-1}_{x}}
\aleq \nrm{B}_{L^{4}_{t} \dot{H}^{3/4}_{x}} \nrm{\varphi}_{S^{\gmm-1}}.
\end{equation*} 

\pfstep{Case 3. (HH) interaction, $k_{0} \leq \min \set{k_{1}, k_{2}} -5$ and $\abs{k_{1} - k_{2}} \leq 5$}
This case follows immediately from Lemma \ref{lem:bilinVarStr} (with $\sgm = \gmm - 1$). \qedhere
\end{proof}

\begin{proof} [Proof of \eqref{eq:abstractEst4phi:2}]
For convenience, we shall suppress writing $I$, and also abbreviate $B_{0}$ to $B$. We shall follow the preceding proof very closely; indeed, let us again decompose the output and inputs into Littlewood-Paley pieces, and analyze them according to Littlewood-Paley trichotomy.

\pfstep{Case 1. (LH) interaction, $k_{1} \leq k_{2}+5$ and $\abs{k_{0} - k_{2}} \leq 5$}
Proceeding as in Case 1 of the proof of \eqref{eq:abstractEst4phi:1}, we easily obtain the estimate
\begin{equation*}
	\nrm{\sum_{(k_{0}, k_{1}, k_{2}) \in \mathrm{LH}} P_{k_{0}} (B_{k_{1}} \varphi_{k_{2}})}_{L^{2}_{t} H^{\gmm}_{x}}
	\aleq \nrm{B}_{L^{2}_{t} L^{\infty}_{x}} \nrm{\varphi}_{L^{\infty}_{t} H^{\gmm}_{x}}
	\aleq \nrm{B}_{L^{2}_{t} L^{\infty}_{x}} \nrm{\varphi}_{S^{\gmm}}.
\end{equation*}

\pfstep{Case 2. (HL) interaction, $k_{2} \leq k_{1}+5$ and $\abs{k_{0} - k_{1}} \leq 5$}
By $L^{2}$ almost orthogonality of $P_{k}$'s and Lemma \ref{lem:simpleConv}, we have
\begin{align*}
\nrm{\sum_{(k_{0}, k_{1}, k_{2}) \in \mathrm{HL}} P_{k_{0}} (B_{k_{1}} \varphi_{k_{2}})}_{L^{2}_{t} H^{\gmm}_{x}}
\aleq & \bb( \sum_{k_{1} > -5} 2^{2 \gmm k^{+}_{1}}\nrm{\sum_{k_{2} \leq k_{1}+5} B_{k_{1}} \varphi_{k_{2}}}^{2}_{L^{2}_{t,x}} \bb)^{1/2}.
\end{align*}

We estimate each summand of the above $\ell^{2}$ sum by H\"older and Strichartz as follows:
\begin{align*}
2^{\gmm k^{+}_{1}}\nrm{\sum_{k_{2}:k_{2} \leq k_{1}+5} B_{k_{1}} \varphi_{k_{2}}}_{L^{2}_{t,x}}
\aleq &  2^{(3/4)k_{1}} 2^{(\gmm-3/4) k^{+}_{1}} \nrm{B_{k_{1}}}_{L^{4}_{t} L^{2}_{x}} \nrm{\varphi}_{S^{\gmm}}  \\
& \times	\sum_{k_{2}: k_{2} \leq k_{1} +5} 2^{-(3/4)k_{1}} 2^{(3/4) k^{+}_{1}} 2^{(3/4)k_{2}} 2^{-\gmm k^{+}_{2}} 
\end{align*}

Since $\gmm > 3/4$, the $k_{2}$ sum is bounded by $\aleq 1$ uniformly in $k_{1}$. Evaluating the $\ell^{2}$ sum in $k_{1}$, we obtain
\begin{equation*}
\nrm{\sum_{(k_{0}, k_{1}, k_{2}) \in \mathrm{HL}} P_{k_{0}} (B_{k_{1}} \varphi_{k_{2}})}_{L^{2}_{t} H^{\gmm}_{x}}
\aleq \nrm{B}_{L^{4}_{t} ( \dot{H}^{3/4}_{x} \cap \dot{H}^{\gmm}_{x})} \nrm{\varphi}_{S^{\gmm}}.
\end{equation*} 

\pfstep{Case 3. (HH) interaction, $k_{0} \leq \min \set{k_{1}, k_{2}} -5$ and $\abs{k_{1} - k_{2}} \leq 5$}
As before, this case follows immediately from Lemma \ref{lem:bilinVarStr} (with $\sgm = \gmm$). \qedhere
\end{proof}

The next estimate will be used to handle the quintilinear interaction $A^{\ell} A_{\ell} \phi$.
\begin{lemma} \label{lem:abstractEst4phi:quintic}
Let $3/4 < \gmm < 1$ and $I \subset \bbR$ a finite interval. Then for $B^{1}, B^{2}, \varphi \in C^{\infty}_{t}(I; \calS_{x})$, we have
\begin{equation} \label{eq:abstractEst4phi:3}
	\nrm{B^{1} B^{2} \varphi}_{L^{\infty}_{t} (I; H^{\gmm-1}_{x})} 
	\aleq \nrm{B^{1}}_{L^{\infty}_{t} (I; \dot{H}^{1/2}_{x})} \nrm{B^{2}}_{L^{\infty}_{t} (I; \dot{H}^{1/2}_{x})} \nrm{\varphi}_{S^{\gmm}(I)}.
\end{equation}

For $\gmm = 1$, we have the following substitute for every $0< \bt < 1$:
\begin{equation} \label{eq:abstractEst4phi:4}
	\nrm{B^{1} B^{2} \varphi}_{L^{\infty}_{t} (I; L^{2}_{x})} 
	\aleq \nrm{B^{1}}_{L^{\infty}_{t} (I; \dot{H}^{1-\bt/2}_{x})} \nrm{B^{2}}_{L^{\infty}_{t} (I; \dot{H}^{1-\bt/2}_{x})} \nrm{\varphi}_{L^{\infty}_{t}(I; \dot{H}^{\bt}_{x})}.
\end{equation}
\end{lemma}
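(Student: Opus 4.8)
The plan is to reduce both estimates to fixed-time statements and then apply the Sobolev product rule (Lemma~\ref{lem:SobProd}) exactly twice: once to absorb the product $B^{1}B^{2}$ into a single Sobolev class, and a second time to multiply that product against $\varphi$. Since the right-hand sides are already $L^{\infty}_{t}$ norms (and $S^{\gmm}(I)\hookrightarrow C_{t}(I;H^{\gmm}_{x})$ by Lemma~\ref{lem:basicProps4S}), it suffices to prove the corresponding purely spatial inequalities at a fixed $t\in I$ and then take $\sup_{t\in I}$; I would suppress the time variable throughout. No harmonic-analytic input beyond Lemma~\ref{lem:SobProd} is needed, so there is essentially no Littlewood--Paley trichotomy to run here.

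For \eqref{eq:abstractEst4phi:3} I would first apply Lemma~\ref{lem:SobProd} with $(\bt_{0},\bt_{1},\bt_{2})=(0,\tfrac12,\tfrac12)$ (admissible since $\max\{0,\tfrac12,\tfrac12\}<1$) to get
\begin{equation*}
	\nrm{B^{1}B^{2}}_{L^{2}_{x}}=\nrm{B^{1}B^{2}}_{\dot H^{0}_{x}}\aleq \nrm{B^{1}}_{\dot H^{1/2}_{x}}\nrm{B^{2}}_{\dot H^{1/2}_{x}},
\end{equation*}
and then apply the inhomogeneous version of Lemma~\ref{lem:SobProd} with $(\bt_{0},\bt_{1},\bt_{2})=(1-\gmm,0,\gmm)$ (admissible precisely because $3/4<\gmm<1$ makes $\max\{1-\gmm,0,\gmm\}=\gmm<1$) to get
\begin{equation*}
	\nrm{(B^{1}B^{2})\varphi}_{H^{\gmm-1}_{x}}\aleq \nrm{B^{1}B^{2}}_{H^{0}_{x}}\nrm{\varphi}_{H^{\gmm}_{x}}.
\end{equation*}
Chaining these and taking $\sup_{t\in I}$ gives \eqref{eq:abstractEst4phi:3}.

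For \eqref{eq:abstractEst4phi:4} the same two-step scheme works with shifted exponents: first Lemma~\ref{lem:SobProd} with $(\bt_{0},\bt_{1},\bt_{2})=(\bt-1,1-\tfrac\bt2,1-\tfrac\bt2)$ (admissible since $0<\bt<1$ gives $\max=1-\tfrac\bt2<1$) yields $\nrm{B^{1}B^{2}}_{\dot H^{1-\bt}_{x}}\aleq\nrm{B^{1}}_{\dot H^{1-\bt/2}_{x}}\nrm{B^{2}}_{\dot H^{1-\bt/2}_{x}}$, and then Lemma~\ref{lem:SobProd} with $(\bt_{0},\bt_{1},\bt_{2})=(0,1-\bt,\bt)$ gives $\nrm{(B^{1}B^{2})\varphi}_{L^{2}_{x}}\aleq\nrm{B^{1}B^{2}}_{\dot H^{1-\bt}_{x}}\nrm{\varphi}_{\dot H^{\bt}_{x}}$. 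The only real subtlety — and the reason the endpoint $\gmm=1$ must be handled by the separate statement \eqref{eq:abstractEst4phi:4} rather than by \eqref{eq:abstractEst4phi:3} — is the strict inequality $\max\{\bt_{0},\bt_{1},\bt_{2}\}<1$ in Lemma~\ref{lem:SobProd}: at $\gmm=1$ the exponent choice in the second application of \eqref{eq:abstractEst4phi:3} would force $\bt_{2}=1$, which is forbidden, so one instead concedes a power $\bt/2$ of regularity on each factor $B^{i}$ to keep every exponent strictly below $1$. Apart from this bookkeeping of exponents, the argument is routine.
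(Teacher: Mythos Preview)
Your proof is correct and is exactly the approach the paper has in mind: the paper simply says ``both estimates are easy consequences of Lemma~\ref{lem:SobProd}; we leave the details to the reader,'' and you have supplied precisely those details with the right exponent choices. Your remark on why the endpoint $\gmm=1$ forces the separate formulation \eqref{eq:abstractEst4phi:4} is also accurate.
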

\begin{proof} 
Both estimates are easy consequences of Lemma \ref{lem:SobProd}; we leave the details to the reader. \qedhere
%
\end{proof}

Finally, we state a lemma for estimating the self-interaction potential.
\begin{lemma} \label{lem:abstractEst4phi:Vphi}
Let $3/4 < \gmm \leq 1$, $I \subset \bbR$ a finite interval and $N$ an integer such that $1 \leq N < 1 +\frac{2}{1-\gmm}$ when $\gmm <1$, and simply $N \geq 1$ when $\gmm = 1$. Then for $\varphi^{j} \in C^{\infty}_{t} (I; \calS_{x})$ $(1 \leq j \leq N)$, 
\begin{equation*}
	\nrm{\prod_{j=1}^{N} \varphi^{j}}_{L^{1}_{t} (I; H^{\gmm-1}_{x})} \aleq \abs{I}^{\alp} \prod_{j=1}^{N} \nrm{\varphi^{j}}_{S^{\gmm}(I)}
\end{equation*}
for some $\alp = \alp(N, \gmm) > 0$.
\end{lemma}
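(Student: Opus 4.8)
The plan is to estimate $\prod_{j=1}^{N} \varphi^{j}$ in $L^{1}_{t}(I; H^{\gmm-1}_{x})$ by trading all of the $N$ factors down to Strichartz-type spaces that are controlled by $S^{\gmm}(I)$, picking up a positive power of $\abs{I}$ from H\"older in time along the way. Since $\gmm - 1 \leq 0$, it suffices (by Sobolev embedding, $L^{r}_{x} \hookrightarrow H^{\gmm-1}_{x}$ for $\frac{1}{r} = \frac{1}{2} - \frac{\gmm-1}{2} = \frac{3-\gmm}{2}$ when $\gmm < 1$, and $L^{2}_{x} \hookrightarrow L^{2}_{x}$ trivially when $\gmm = 1$) to bound $\prod_j \varphi^j$ in $L^{1}_{t}(I; L^{r}_{x})$ for an appropriate $r$. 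The idea is to write this via H\"older in $x$ as a product $\prod_{j} \nrm{\varphi^{j}}_{L^{q_j}_t L^{r_j}_x}$ with $\sum_j \frac{1}{r_j} = \frac{1}{r}$ and $\sum_j \frac{1}{q_j} < 1$ (strictly, to leave room for the $\abs{I}^{\alp}$ factor), where each pair $(q_j, r_j)$ is admissible in the sense of Lemma \ref{lem:basicProps4S}(2), i.e.\ $1 - \frac{1}{q_j} - \frac{2}{r_j} \leq \gmm$ with the endpoint excluded.

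First I would treat the case $\gmm = 1$, which is the cleanest: here we need $\prod_j \varphi^j \in L^1_t L^2_x$, and since $S^1(I) \subset L^q_t L^r_x$ for every $(q,r) \in \mathrm{Str}$ with $1 - \frac1q - \frac2r \le 1$ — in particular $S^1 \subset L^4_t L^\infty_x$ (which is $q=4$, $r=\infty$, $1 - \frac14 - 0 = \frac34 \le 1$) and $S^1 \subset L^\infty_t L^2_x$ — we simply put one factor in $L^\infty_t L^2_x$ and pair it with, say, $N-1$ factors in $L^{q}_t L^\infty_x$ for $q$ slightly larger than $4(N-1)$, using H\"older $\frac{1}{q'} = \frac{N-1}{q} < 1$ in time and noting $\frac14 \le 1 - \frac{1}{q'} - 0$ so each factor with $q$-exponent is still controlled by $S^1$ (for $N\ge 2$; $N=1$ is trivial). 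The leftover time exponent gives $\abs{I}^\alp$ with $\alp = 1 - \frac{N-1}{q} > 0$. For the case $3/4 < \gmm < 1$, I would instead distribute: using $S^\gmm(I) \subset L^q_t L^r_x$ whenever $(q,r)$ is admissible, choose all $N$ factors in the \emph{same} space $L^{q}_t L^{r}_x$ with $\frac{N}{r} = \frac{3-\gmm}{2}$ (so H\"older in $x$ lands in $L^r_x$ with $\frac1r = \frac{3-\gmm}{2}$ as needed for Sobolev embedding into $H^{\gmm-1}_x$) and $q$ chosen so that $(q,r)$ is admissible, i.e.\ $1 - \frac1q - \frac2r < \gmm$, equivalently $\frac1q > 1 - \gmm - \frac2r = 1 - \gmm - \frac{2N}{2\cdot\frac{3-\gmm}{2}}\cdot\frac1N$; after plugging in $\frac1r = \frac{3-\gmm}{2N}$ one checks that the constraint $\frac{N}{q} < 1$ (needed for H\"older in time with room to spare) is exactly equivalent to $N < 1 + \frac{2}{1-\gmm}$, which is the hypothesis. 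The power is $\alp = 1 - \frac{N}{q} > 0$.

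More concretely, I would carry it out in this order: (i) reduce to an $L^1_t L^r_x$ bound via Sobolev embedding in $x$, recording the required value of $r$; (ii) by H\"older in $x$, reduce to a product of $L^{r_j}_x$ norms with $\sum \frac{1}{r_j} = \frac1r$; (iii) choose the $r_j$ (equal to each other in the $\gmm<1$ case, or one equal to $2$ and the rest equal to $\infty$ in the $\gmm=1$ case) and the corresponding $q_j$ so that each $(q_j, r_j)$ satisfies the admissibility condition of Lemma \ref{lem:basicProps4S}(2) strictly and $\sum_j \frac{1}{q_j} < 1$; (iv) apply H\"older in time on $I$, using $\abs{I} < \infty$ to extract $\abs{I}^{\alp}$ with $\alp = 1 - \sum_j \frac{1}{q_j}$; (v) invoke $S^\gmm(I) \subset L^{q_j}_t L^{r_j}_x$ for each factor to conclude. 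The one arithmetic point that must be verified carefully is precisely that the inequality $\sum_j \frac{1}{q_j} < 1$ can be arranged \emph{if and only if} $N < 1 + \frac{2}{1-\gmm}$ (for $\gmm < 1$); this is where the degree restriction enters, and it is essentially a linear-programming check once the exponents are written down.

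The main obstacle is not conceptual but bookkeeping: one must make sure the exponent pair $(q_j, r_j)$ used for each factor genuinely lies in the admissible range for which Lemma \ref{lem:basicProps4S}(2) gives $S^\gmm \subset L^{q_j}_t L^{r_j}_x$ — in particular avoiding the excluded endpoint $(1 - \frac{1}{q_j} - \frac{2}{r_j}, r_j) = (\gmm, \infty)$ — while simultaneously keeping $\sum_j \frac{1}{q_j}$ strictly below $1$. The slack needed to dodge the endpoint and the slack needed for the positive power of $\abs{I}$ are of the same nature, so a single small parameter $\eps > 0$ (shrinking $\frac1r$ slightly at each factor, or equivalently enlarging $q_j$ slightly) handles both, at the cost of $\deg V$ being \emph{strictly} less than $1 + \frac{1}{1-\gmm}$ so that $N - 1 < \frac{2}{1-\gmm}$ strictly, matching the hypothesis of Theorem \ref{thm:lwp4CSH}. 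I would also remark in passing that, since $N$ is finite and we only need boundedness (not sharp constants), one may freely use crude H\"older splittings rather than Littlewood-Paley trichotomy here — this is genuinely the simplest of the multilinear estimates in the paper.
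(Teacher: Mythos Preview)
Your approach is correct in spirit and different from the paper's, but note one arithmetic slip: $\frac{1}{2}-\frac{\gmm-1}{2}=\frac{2-\gmm}{2}$, not $\frac{3-\gmm}{2}$. With the corrected value $\frac{1}{r}=\frac{2-\gmm}{2N}$, the minimal admissible $\frac{1}{q}$ is $\max\{0,\,1-\gmm-\frac{2-\gmm}{N}\}$, and your claimed equivalence $\frac{N}{q}<1 \Longleftrightarrow N<1+\frac{2}{1-\gmm}$ then holds exactly; with the mistaken exponent it would not.

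The paper takes a different route. Rather than the dual Sobolev embedding $L^{r}_{x}\hookrightarrow H^{\gmm-1}_{x}$, it invokes the Sobolev product rule (Lemma~\ref{lem:SobProd}) to peel off one factor, reducing to an $L^{1}_{t}L^{2}_{x}$ bound for the remaining $(N-1)$-fold product, and then distributes \emph{asymmetrically}: as many factors as possible go into $L^{\infty}_{t}L^{2/(1-\gmm)}_{x}$ (pure Sobolev), the rest into $L^{1/(1-\gmm+\eps)}_{t}L^{\infty}_{x}$ (pure Strichartz), with a further case split when $\frac{1}{1-\gmm}\notin\bbZ$. Your symmetric placement of all $N$ factors in a common $L^{q}_{t}L^{r}_{x}$ is tidier and avoids the case analysis; the paper's split, on the other hand, reuses an already-stated product lemma and makes the role of the threshold $N-1\lessgtr\frac{1}{1-\gmm}$ explicit. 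For $\gmm=1$ the paper is simpler still than your sketch: it places every factor in $L^{\infty}_{t}L^{2N}_{x}\supset S^{1}$ and takes $\alp=1$. Either way, no Littlewood--Paley or null structure is needed here, as you observe.
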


\begin{proof} 
In what follows, we shall suppress writing $I$. When $N = 1$ the lemma is obvious with $\alp =1$, so we may assume that $N > 1$. The case $\gmm = 1$ is also easy to discard with $\alp =1$; indeed, simply apply H\"older to put each $\varphi^{j}$ in $L^{\infty}_{t} L^{2N}_{x}$, as $S^{1} \subset L^{\infty}_{t} L^{2N}_{x}$ by Sobolev.

Consider now the case $3/4 < \gmm < 1$. Using Lemma \ref{lem:SobProd}, we first estimate
\begin{equation*}
\nrm{\prod_{j=1}^{N} \varphi^{j}}_{L^{1}_{t} H^{\gmm-1}_{x}} \aleq \nrm{\prod_{j=1}^{N-1} \varphi^{j}}_{L^{1}_{t} L^{2}_{x}} \nrm{\varphi^{N}}_{H^{\gmm}_{x}}
\end{equation*}

We then estimate $\nrm{\prod_{j=1}^{N-1} \varphi^{j}}_{L^{1}_{t} L^{2}_{x}}$ via the following procedure:
\begin{itemize}
\item When $N - 1 \leq \frac{1}{1-\gmm}$, we may put each $\varphi^{j}$ in $L^{\infty}_{t} L^{2(N-1)}_{x}$, as $S^{\gmm} \subset L^{\infty}_{t} L^{2(N-1)}_{x}$ by Sobolev. 
\item When $N - 1> \frac{1}{1-\gmm}$ and $\frac{1}{1-\gmm} \in \bbZ$, put the first $\frac{1}{1-\gmm}$ factors in $L^{\infty}_{t} L^{\frac{2}{1-\gmm}}_{x}$ and the remaining factors in $L^{\frac{1}{1-\gmm+\eps}}_{t} L^{\infty}_{x}$ for some small $\eps > 0$ to be chosen. Note that $S^{\gmm} \subset L^{\infty}_{t} L^{\frac{2}{1-\gmm}}_{x}$ by Sobolev and $S^{\gmm} \subset L^{\frac{1}{1-\gmm+\eps}}_{t} L^{\infty}_{x}$ by Strichartz.
\item Finally, when $N - 1> \frac{1}{1-\gmm}$ and $\frac{1}{1-\gmm} \not\in \bbZ$, we again put the first $\lfloor \frac{1}{1-\gmm} \rfloor$ factors in $L^{\infty}_{t} L^{\frac{2}{1-\gmm}}_{x}$, but put the next factor in $L^{q}_{t} L^{r}_{x}$, where $\frac{1}{r} = \frac{1}{2} - \frac{1-\gmm}{2} \lfloor \frac{1}{1-\gmm} \rfloor$ and $\frac{1}{q} = 1 - \frac{2}{r} - \gmm$. We then put the remaining factors in $L^{\frac{1}{1-\gmm+\eps}}_{t} L^{\infty}_{x}$ as before, where $\eps > 0$ is again a small number to be chosen. Note that $S^{\gmm} \subset L^{q}_{t} L^{r}_{x}$ by Strichartz since $\gmm > 3/4$.
%
\end{itemize}

In the former case, $\alp =1$. In the second and third cases, the hypothesis $N < 1+ \frac{2}{1-\gmm}$ ensures that (after choosing $\eps > 0$ sufficiently small) we are left with a factor of $\abs{I}^{\alp}$ with $\alp > 0$. \qedhere
%
%
\end{proof}

\section{Proof of the main theorems} \label{sec:pf4mainThm}
In this section, we finally prove Theorems \ref{thm:lwp4CSH} and \ref{thm:gwp4CSH}. The first step is to formulate a suitable version of energy estimate using the twisted Duhamel formula, using the machinery we have developed so far. Given $\gmm \geq 0$ and a finite interval $I \subset \bbR$, define the space $\mathfrak{A}^{\gmm}_{0}(I)$ by
\begin{equation*}
	\mathfrak{A}^{\gmm}_{0}(I) := L^{4}_{t} (I; \dot{H}^{3/4}_{x}) \cap L^{4}_{t} (I; \dot{H}^{\gmm}_{x}) \cap L^{2}_{t} (I; C^{0}_{x}).
\end{equation*}
where $C^{0}_{x}$ is the closure of $\calS_{x}$ under the sup-norm.

\begin{proposition}[Energy estimate via twisted Duhamel formula] \label{prop:tDuhamelEnergy}
Let $3/4 < \gmm \leq 1$ and $I \subset \bbR$ a finite interval such that $0 \in I$. Then the following statements hold.
\begin{enumerate}
\item Consider the IVP
\begin{equation} \label{eq:rnWave:IVP}
\left\{	
\begin{aligned}
	\Box \phi + i \rd_{t} (A_{0} \phi) =& F \\
	(\phi, \covD_{t} \phi)(0) =& (f, g),
\end{aligned}
\right.
\end{equation}
where $(f, g) \in H^{\gmm}_{x} \times H^{\gmm-1}_{x}$, $F \in L^{1}_{t} (I; H^{\gmm-1}_{x})$ and $A_{0} \in C_{t}(I; \dot{H}^{1/2}_{x}) \cap \mathfrak{A}^{\gmm}_{0}(I)$.
Then there exists $\dlt_{0} > 0$ such that the following statement holds: If
\begin{equation*}
	\abs{I} \leq \dlt_{0} \nrm{A_{0}}^{-2}_{\mathfrak{A}^{\gmm}_{0}(I)},
\end{equation*}
then there exists a unique solution $\phi$ to the above system such that $(\phi, \covD_{t} \phi) \in S^{\gmm}(I) \times S^{\gmm-1}(I)$, which obeys the estimate
\begin{equation} \label{eq:tDuhamelEnergy}
	\nrm{\phi}_{S^{\gmm}(I)} + \nrm{\covD_{t} \phi}_{S^{\gmm-1}(I)} \aleq \nrm{(f,g)}_{H^{\gmm}_{x} \times H^{\gmm-1}_{x}} + \nrm{F}_{L^{1}_{t}(I; H^{\gmm-1}_{x})}.
\end{equation}

\item Consider $(f', g') \in H^{\gmm}_{x} \times H^{\gmm-1}_{x}$, $F' \in L^{1}_{t} (I; H^{\gmm-1}_{x})$ and $A'_{0} \in C_{t}(I; \dot{H}^{1/2}_{x}) \cap \mathfrak{A}^{\gmm}_{0}(I)$ satisfying $\abs{I} \leq \dlt_{0} \nrm{A'_{0}}^{-2}_{\mathfrak{A}^{\gmm}_{0}(I)}$. Let $\phi'$ be the corresponding solution to \eqref{eq:rnWave:IVP} given by (1). Then the following difference estimate holds:
\begin{equation} \label{eq:tDuhamelEnergy:diff}
\begin{aligned}
&		\nrm{\dlt \phi}_{S^{\gmm}(I)} + \nrm{\dlt \covD_{t} \phi}_{S^{\gmm-1}(I)} \\
& \quad 	\aleq \nrm{\dlt(f,g)}_{H^{\gmm}_{x} \times H^{\gmm-1}_{x}}  
		+ \nrm{\dlt F}_{L^{1}_{t}(I; H^{\gmm-1}_{x})} + \abs{I}^{1/2} \nrm{\dlt A_{0}}_{\mathfrak{A}^{\gmm}_{0}(I)} \nrm{\phi'}_{S^{\gmm}(I)} 
\end{aligned}
\end{equation}

Here, $\dlt$ is a shorthand for the difference between primed and un-primed objects. e.g. $\dlt \phi := \phi - \phi'$, $\dlt \covD_{t} \phi := \covD_{t} \phi - \covD_{t}' \phi'$, $\dlt (f,g) := (f,g) - (f',g')$ etc.
\end{enumerate}
\end{proposition}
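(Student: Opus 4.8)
The plan is to run a Banach fixed point argument for $\phi$ alone in $S^{\gmm}(I)$, based on the twisted Duhamel formula \eqref{eq:tDuhamel:1} with $A_0$ regarded as a fixed coefficient, and then to read off the estimate for $\covD_t\phi$ from \eqref{eq:tDuhamel:2}. Throughout I will assume $\abs{I}\le 1$, so that Lemma \ref{lem:homEnergy:2} and Corollary \ref{cor:inhomEnergy} apply. For $\psi\in S^{\gmm}(I)$, define
\[
\mathcal{T}\psi(t,x):=\cos t\abs{\nb}\,f+\tfrac{\sin t\abs{\nb}}{\abs{\nb}}\,g-\int_0^t\tfrac{\sin(t-t')\abs{\nb}}{\abs{\nb}}F(t')\,\ud t'+i\int_0^t\cos(t-t')\abs{\nb}\,(A_0\psi)(t')\,\ud t'.
\]
By Lemmas \ref{lem:homEnergy:1}, \ref{lem:homEnergy:2} and Corollary \ref{cor:inhomEnergy}(1), the first three terms lie in $S^{\gmm}(I)$ with combined norm $\aleq\nrm{(f,g)}_{H^{\gmm}_x\times H^{\gmm-1}_x}+\nrm{F}_{L^1_t(I;H^{\gmm-1}_x)}$. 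For the last term, Corollary \ref{cor:inhomEnergy}(2) bounds its $S^{\gmm}$ norm by $\nrm{A_0\psi}_{L^1_t(I;H^{\gmm}_x)}$, and then H\"older in time together with the product estimate \eqref{eq:abstractEst4phi:2} (applied to $A_0\in\mathfrak{A}^{\gmm}_0(I)$ by a density argument, using $\mathfrak{A}^{\gmm}_0(I)\hookrightarrow L^2_t(I;L^\infty_x)\cap L^4_t(I;\dot H^{3/4}_x\cap\dot H^{\gmm}_x)$) gives $\aleq\abs{I}^{1/2}\nrm{A_0}_{\mathfrak{A}^{\gmm}_0(I)}\nrm{\psi}_{S^{\gmm}(I)}$. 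Hence $\mathcal{T}$ maps $S^{\gmm}(I)$ to itself and is affine with Lipschitz constant $\le C\abs{I}^{1/2}\nrm{A_0}_{\mathfrak{A}^{\gmm}_0(I)}$; choosing $\dlt_0:=(2C)^{-2}$, the hypothesis $\abs{I}\le\dlt_0\nrm{A_0}^{-2}_{\mathfrak{A}^{\gmm}_0(I)}$ makes $\mathcal{T}$ a contraction with constant $\le 1/2$, so it has a unique fixed point $\phi\in S^{\gmm}(I)$ — which is the desired solution of \eqref{eq:rnWave:IVP} (equivalently in the distributional sense, as one checks by applying $\Box$ to \eqref{eq:tDuhamel:1}) — satisfying $\nrm{\phi}_{S^{\gmm}(I)}\le 2\nrm{\mathcal{T}0}_{S^{\gmm}(I)}\aleq\nrm{(f,g)}_{H^{\gmm}_x\times H^{\gmm-1}_x}+\nrm{F}_{L^1_t(I;H^{\gmm-1}_x)}$.

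Next I will estimate $\covD_t\phi$. Differentiating \eqref{eq:tDuhamel:1} in $t$ and using $\covD_t=\rd_t-iA_0$ (as in the proof of Lemma \ref{lem:tDuhamel}) shows that $\covD_t\phi$ is given by \eqref{eq:tDuhamel:2} with this $\phi$. The homogeneous and $F$-terms there are controlled in $S^{\gmm-1}(I)$ by Lemma \ref{lem:homEnergy:1} applied with regularity $\gmm-1$ (noting $\nrm{\abs{\nb}f}_{H^{\gmm-1}_x}\le\nrm{f}_{H^{\gmm}_x}$) and Corollary \ref{cor:inhomEnergy}(2) applied with regularity $\gmm-1$, with norm $\aleq\nrm{(f,g)}_{H^{\gmm}_x\times H^{\gmm-1}_x}+\nrm{F}_{L^1_t(I;H^{\gmm-1}_x)}$. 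The remaining term is handled by Corollary \ref{cor:inhomEnergy}(2) (regularity $\gmm-1$), the Fourier-side bound $\nrm{\abs{\nb}(A_0\phi)}_{H^{\gmm-1}_x}\le\nrm{A_0\phi}_{H^{\gmm}_x}$, H\"older in time and \eqref{eq:abstractEst4phi:2}, giving $\aleq\abs{I}^{1/2}\nrm{A_0}_{\mathfrak{A}^{\gmm}_0(I)}\nrm{\phi}_{S^{\gmm}(I)}\aleq\nrm{\phi}_{S^{\gmm}(I)}$ by the smallness of $\abs{I}$. Adding this to the bound from the previous paragraph yields \eqref{eq:tDuhamelEnergy}. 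To make the differentiation rigorous and to fix the meaning of ``solution'', I will first run the argument for smooth data (where \eqref{eq:tDuhamel:2} is a direct consequence of Lemma \ref{lem:tDuhamel}) and then pass to general data by density, using the difference estimate from part (2).

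For part (2), let $\phi,\phi'$ be the solutions furnished by part (1). Subtracting \eqref{eq:tDuhamel:1} for $\phi$ and $\phi'$ and writing $A_0\phi-A_0'\phi'=A_0\,\dlt\phi+(\dlt A_0)\phi'$, one obtains for $\dlt\phi$ an identity of exactly the shape of $\mathcal{T}$ above, with data $\dlt(f,g)$ and forcing $\dlt F$, plus the extra term $i\int_0^t\cos(t-t')\abs{\nb}\,((\dlt A_0)\phi')(t')\,\ud t'$. Running the estimates of the first paragraph, the $A_0\,\dlt\phi$ contribution has $S^{\gmm}$-norm $\le\tfrac12\nrm{\dlt\phi}_{S^{\gmm}(I)}$ by the choice of $\dlt_0$ and is absorbed, while the extra term is $\aleq\abs{I}^{1/2}\nrm{\dlt A_0}_{\mathfrak{A}^{\gmm}_0(I)}\nrm{\phi'}_{S^{\gmm}(I)}$ via \eqref{eq:abstractEst4phi:2}; this gives \eqref{eq:tDuhamelEnergy:diff} for $\nrm{\dlt\phi}_{S^{\gmm}(I)}$. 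Subtracting \eqref{eq:tDuhamel:2} for the two solutions and arguing in the same way — the same splitting of $A_0\phi-A_0'\phi'$, the bound $\nrm{\abs{\nb}\,\cdot\,}_{H^{\gmm-1}_x}\le\nrm{\cdot}_{H^{\gmm}_x}$, and insertion of the $\dlt\phi$-bound just obtained together with the smallness of $\abs{I}$ — produces the same bound for $\nrm{\dlt\covD_t\phi}_{S^{\gmm-1}(I)}$. Summing the two proves \eqref{eq:tDuhamelEnergy:diff}.

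The only genuinely substantive point, and where I expect the real work to lie, is the quantitative balance in the fixed point step: the H\"older-in-time gain $\abs{I}^{1/2}$ must be weighed against $\nrm{A_0}_{\mathfrak{A}^{\gmm}_0(I)}$ coming out of the product estimate \eqref{eq:abstractEst4phi:2} (which in turn rests on the Klainerman--Tataru refinement of Strichartz), and the hypothesis $\abs{I}\le\dlt_0\nrm{A_0}^{-2}_{\mathfrak{A}^{\gmm}_0(I)}$ is tailored exactly so that the product of these is small — this is the entire payoff of working with the twisted Duhamel formula rather than the ordinary one, since the untwisted term $\rd_t(A_0\phi)$ would not admit such a bound. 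The remaining subtlety is purely bookkeeping: one must everywhere use the inhomogeneous norm $H^{\gmm-1}_x$ (never $\dot H^{\gmm-1}_x$) at low frequencies, which is already arranged in Lemma \ref{lem:homEnergy:2} and Corollary \ref{cor:inhomEnergy}.
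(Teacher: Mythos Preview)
Your proposal is correct and uses the same ingredients as the paper: the twisted Duhamel formulae \eqref{eq:tDuhamel:1}--\eqref{eq:tDuhamel:2}, the homogeneous/inhomogeneous bounds of Lemmas \ref{lem:homEnergy:1}, \ref{lem:homEnergy:2} and Corollary \ref{cor:inhomEnergy}, the product estimate \eqref{eq:abstractEst4phi:2}, and the smallness $\abs{I}^{1/2}\nrm{A_0}_{\mathfrak{A}^{\gmm}_0}\le 1/2$ to absorb the $A_0\phi$ contribution. The only organizational difference is that the paper first produces a smooth solution by standard wave theory plus Gronwall, uses the twisted Duhamel formula to derive \eqref{eq:tDuhamelEnergy} as an a priori estimate, and then passes to rough data by approximation (with a separate uniqueness step justifying the integration by parts in \eqref{eq:tDuhamel:1} for rough $\phi$ via $A_0\phi\in L^2_t H^{\gmm}_x$), whereas you run the contraction directly in $S^{\gmm}(I)$ on the integral equation; both routes are standard and equivalent here.
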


\begin{proof} 
We begin by considering $f,g, f', g' \in \calS_{x}$ and $A_{0}, F, A'_{0}, F' \in C^{\infty}_{t}(I; \calS_{x})$. Well-posedness of the standard (inhomogeneous) wave equation, along with a simple Gronwall inequality, easily shows the existence of a solution $\phi \in C_{t}^{\infty}(I; \calS_{x})$ of \eqref{eq:rnWave:IVP}. Then by the twisted Duhamel formula (Lemma \ref{lem:tDuhamel}), Lemmas \ref{lem:homEnergy:1}, \ref{lem:homEnergy:2} and Corollary \ref{cor:inhomEnergy}, there exists some $C_{1} > 0$ such that
\begin{equation*}
	\nrm{\phi}_{S^{\gmm}(I)} + \nrm{\covD_{t} \phi}_{S^{\gmm-1}(I)} \leq C_{1} \bb( \nrm{(f,g)}_{H^{\gmm}_{x} \times H^{\gmm-1}_{x}} + \nrm{F}_{L^{1}_{t}(I; H^{\gmm-1}_{x})} + \abs{I}^{1/2}  \nrm{A_{0} \phi}_{L^{2}_{t} (I; H^{\gmm}_{x})} \bb).
\end{equation*}

By \eqref{eq:abstractEst4phi:2} (Lemma \ref{lem:abstractEst4phi}), there exists $C_{2} > 0$ such that
\begin{equation} \label{eq:tDuhamelEnergy:pf}
C_{1} \abs{I}^{1/2}  \nrm{A_{0} \phi}_{L^{2}_{t}(I; H^{\gmm}_{x})} \leq C_{1} C_{2} \abs{I}^{1/2} \nrm{A_{0}}_{\mathfrak{A}^{\gmm}_{0}(I)} \nrm{\phi}_{S^{\gmm}(I)}.
\end{equation}

Choosing $\dlt_{0}$ sufficiently small so that $C_{1} C_{2} \abs{I}^{1/2} \nrm{A_{0}}_{\mathfrak{A}^{\gmm}_{0}(I)} \leq 1/2$, this term maybe absorbed into the left-hand side, proving \eqref{eq:tDuhamelEnergy}. The difference estimate \eqref{eq:tDuhamelEnergy:diff} can be proved in a similar manner by taking the difference between the twisted Duhamel formulas for $\phi$ and $\phi'$; we leave the easy details to the reader.

To remove the smoothness assumption, note that all of the statements of the proposition follow immediately by approximation, except uniqueness. To prove uniqueness, it suffices to establish the following claim: If $A_{0} \in \mathfrak{A}^{\gmm}_{0}(I)$, $(\phi, \covD_{t} \phi) \in S^{\gmm}(I) \times S^{\gmm-1}(I)$, $(\phi, \covD_{t} \phi)(0) = 0$ and $\Box \phi + i \rd_{t}(A_{0} \phi) = 0$, then $\phi = 0$. For this purpose, note that $A_{0} \phi \in L^{2}_{t} H^{\gmm}_{x}$ by \eqref{eq:tDuhamelEnergy:pf}. This is enough to justify the integration by parts in the proof of \eqref{eq:tDuhamel:1}, and thus
\begin{equation*}
	\nrm{\phi}_{S^{\gmm}(I)} 
	\leq C_{1} C_{2} \abs{I}^{1/2} \nrm{A_{0}}_{\mathfrak{A}_{0}^{\gmm}} \nrm{\phi}_{S^{\gmm}(I)} 
	\leq \frac{1}{2} \nrm{\phi}_{S^{\gmm}(I)},
\end{equation*}
from which our claim follows. \qedhere
\end{proof}

We are now ready to prove Theorems \ref{thm:lwp4CSH} and \ref{thm:gwp4CSH}. For the LWP theorem, the argument is a Picard iteration with Proposition \ref{prop:tDuhamelEnergy}. As this is a standard procedure, we will only give a sketch of the proof.

\begin{proof} [Proof of Theorem \ref{thm:lwp4CSH}]
Fix $3/4 < \gmm \leq 1$.
Starting from $(A_{0}^{(-1)}, A_{1}^{(-1)}, A_{2}^{(-1)}, \phi^{(-1)}, \covD_{t} \phi^{-1}) = 0$, we define the $n$-th Picard iterate recursively as follows: Define $A^{(n)}_{1}$ and $A^{(n)}_{2}$ by
\begin{align*}
A^{(n)}_{1} :=& - (-\lap)^{-1} \rd_{2} \Im (\phi^{(n-1)} \overline{\covD_{t} \phi^{(n-1)}}), \\
A^{(n)}_{2} :=& (-\lap)^{-1} \rd_{1} \Im (\phi^{(n-1)} \overline{\covD_{t} \phi^{(n-1)}}).
\end{align*}

Then we define $A^{(n)}_{0} := A^{(n)}_{0,1} + A^{(n)}_{0,2}$, where
\begin{align*}
A^{(n)}_{0,1} :=& (-\lap)^{-1} \rd_{1} \Im(\phi^{(n-1)} \overline{\rd_{2} \phi^{(n-1)}}) - (-\lap)^{-1}  \rd_{2} \Im(\phi^{(n-1)} \overline{\rd_{1} \phi^{(n-1)}}), \\
A^{(n)}_{0,2} :=& - (-\lap)^{-1} \rd_{1} \Re(\phi^{(n-1)} \overline{A^{(n)}_{2} \phi^{(n-1)}}) + (-\lap)^{-1}  \rd_{2} \Re(\phi^{(n-1)} \overline{A^{(n)}_{1}\phi^{(n-1)}}).
\end{align*}

We note that the operators $(-\lap)^{-1} \rd_{j}$ are well-defined if $(\phi^{(n-1)}, \covD_{t} \phi^{(n-1)}) \in S^{\gmm}(I) \times S^{\gmm-1}(I)$ thanks to Lemmas \ref{lem:abstractEst4A:1} and \ref{lem:abstractEst4A:quadric}. Furthermore, by Lemmas \ref{lem:abstractEst4A:2} and \ref{lem:nullStr4A0}, we see that $A_{0} \in \mathfrak{A}^{\gmm}_{0}(I)$ as well.

Next, let $\phi^{(n)}$ be the solution to the following equation with initial data $(f,g)$ obtained by Proposition \ref{prop:tDuhamelEnergy}:
\begin{equation} \label{eq:lwp4CSH:pf:1}
\begin{aligned}
\Box \phi^{(n)} + i \rd_{t} (A^{(n)}_{0} \phi^{(n)}) 
=& m \phi^{(n-1)} + 2 i A^{(n)}_{\ell} \rd^{\ell} \phi^{(n-1)} - i A^{(n)}_{0,1} \covD_{t} \phi^{(n-1)}  \\
& - i A^{(n)}_{0,2} \covD_{t} \phi^{(n-1)} + (A^{(n)})^{\ell} A^{(n)}_{\ell} \phi^{(n-1)} + W(\phi^{(n-1)})
\end{aligned}
\end{equation}

This belongs to $S^{\gmm}(I)$ if $(\phi^{(n-1)}, \covD_{t} \phi^{(n-1)}) \in S^{\gmm}(I) \times S^{\gmm-1}(I)$. Finally, let $\covD_{t} \phi^{(n)} := \rd_{t} \phi^{(n)} + i A_{0}^{(n)} \phi^{(n)}$, which belongs to $S^{\gmm-1}(I)$ thanks to Proposition \ref{prop:tDuhamelEnergy}.

Write $\calI := \nrm{(f, g)}_{H^{\gmm}_{x} \times H^{\gmm-1}_{x}}$. We claim that the above procedure defines\footnote{Observe that only $\phi^{(n-1)}$ and $A^{(n-1)}_{0}$ are needed to construct the whole $n$-th iterate ($A^{(n)}_{1}$, $A^{(n)}_{2}$,$A^{(n)}_{0}$, $\phi^{(n)}$, $\covD_{t} \phi^{(n)}$).} a contraction map in the set
\begin{equation*}
	\mathfrak{B}_{R, \dlt} := \set{(A_{0}, \phi) \in \mathfrak{A}^{\gmm}_{0} \times S^{\gmm}(I): 
		\nrm{(\phi, \covD_{t} \phi)}_{S^{\gmm}(I) \times S^{\gmm-1}(I)} \leq R, 
		\nrm{A_{0}}_{\mathfrak{A}^{\gmm}_{0}} \leq B (R^{2}+R^{4}),
		\abs{I} \leq \dlt }
\end{equation*}
for an absolute constant $B > 0$, $R = R(\calI) > 0$ and $\dlt = \dlt(m, R, \deg V) > 0$. It is a standard procedure to obtain Theorem \ref{thm:lwp4CSH} from this claim, whose details we omit. 
Instead of giving a full proof of the claim, we will only outline the proof that this iteration maps $\mathfrak{B}_{R, \dlt}$ to itself for some well-chosen $R, \dlt  > 0$. The proof that its Lipschitz norm is $< 1$ (by modifying the choice of $\dlt$ if necessary) is similar and therefore left to the reader.

Let $(A^{(n-1)}_{0}, \phi^{(n-1)}) \in \mathfrak{B}_{R, \dlt}$, and assume that $\abs{I} \leq \dlt \leq 1$. From the hierarchical definitions of $A^{(n)}_{1}$, $A^{(n)}_{2}$ and $A^{(n)}_{0}$, we see that Lemmas \ref{lem:abstractEst4A:1}, \ref{lem:abstractEst4A:2}, \ref{lem:nullStr4A0} and \ref{lem:abstractEst4A:quadric} (combined with H\"older in time) imply the existence of some $ B>0$ such that
\begin{equation}
	\nrm{A^{(n)}_{0}}_{\mathfrak{A}^{\gmm}_{0}} \leq B (R^{2} + R^{4}).
\end{equation}

To estimate $(\phi^{(n)}, \covD_{t} \phi^{(n)})$, note that the multilinear estimates we have proven so far (combined with H\"older in time) imply the following estimates:
\begin{align*}
\nrm{m \phi^{(n-1)}}_{L^{1}_{t} (I; H^{\gmm-1}_{x})} \aleq & m \dlt R, \\
\nrm{2 i A^{(n)}_{\ell} \rd^{\ell} \phi^{(n-1)} - i A_{0,1}^{(n)} \covD_{t} \phi^{(n-1)}}_{L^{1}_{t} (I; H^{\gmm-1}_{x})} \aleq & \dlt^{1/2} R^{3}, \hskip1.3em 
		(\hbox{Lemmas \ref{lem:abstractEst4A:2}, \ref{lem:abstractEst4phi}}) \\
\nrm{- i A_{0,2}^{(n)} \covD_{t} \phi^{(n-1)} + (A^{(n)})^{\ell} A^{(n)}_{\ell} \phi^{(n-1)}}_{L^{1}_{t} (I; H^{\gmm-1}_{x})} \aleq & \dlt R^{5}, \hskip2.5em 
		(\hbox{Lemmas \ref{lem:abstractEst4A:1}, \ref{lem:abstractEst4A:quadric}, \ref{lem:abstractEst4phi} and \ref{lem:abstractEst4phi:quintic} }) \\
\nrm{W(\phi^{(n-1)})}_{L^{1}_{t} (I; H^{\gmm-1}_{x})} \aleq & \dlt^{\alp(\deg V, \gmm)} (R+R^{2 \deg V -1}). \quad 
		(\hbox{Lemma \ref{lem:abstractEst4phi:Vphi}})
\end{align*}

These are exactly the terms on the right-hand side of \eqref{eq:lwp4CSH:pf:1}. Now applying Proposition \ref{prop:tDuhamelEnergy} with $\dlt \leq \dlt_{0} B^{-2} (R^{2} + R^{4})^{-2}$, we obtain
\begin{equation*}
	\nrm{(\phi^{(n)}, \covD_{t} \phi^{(n)})}_{S^{\gmm}(I) \times S^{\gmm-1}(I)} 
	\aleq \calI + m \dlt R + \dlt^{1/2} R^{3} + \dlt R^{5} + \dlt^{\alp} (R+R^{2 \deg V -1}).
\end{equation*}

Choosing $R = C \calI$ for sufficiently large $C > 0$ and $\dlt = \dlt(R, m, \deg V) > 0$ sufficiently small, we obtain $(A^{(n)}_{0}, \phi^{(n)}) \in \mathfrak{B}_{R, \dlt}$ as desired. \qedhere
\end{proof}

To prove finite energy GWP, we need the following lemma.
\begin{lemma} \label{lem:energy}
Let $(A_{\mu}, \phi)$ be a solution to \eqref{eq:CSH-C} on some finite interval $I \subset \bbR$ such that energy conservation holds, i.e., \eqref{eq:energyConsv} is true. Assume, moreover, that for some $\alp \geq 0$, 
\begin{equation*}
	V(r) \geq - \alp^{2} r
\end{equation*}
for all $r \geq 0$. Then the $H^{1}_{x} \times L^{2}_{x}$ norm of $(\phi, \covD_{t} \phi)$ is uniformly bounded on $I$.
\end{lemma}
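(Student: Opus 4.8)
The plan is to obtain every bound from two ingredients: the conserved energy $\bfE$ of \eqref{eq:E4CSH}, and a Gronwall estimate for the $L^2$ charge $N(t) := \nrm{\phi(t)}_{L^2_{x}}^2$. \emph{First}, I would extract coercivity from \eqref{eq:energyConsv}. Since $m \geq 0$, the terms $\sum_{j=1,2}\abs{\covD_j \phi}^2$ and $m\abs{\phi}^2$ in \eqref{eq:E4CSH} are non-negative, while $V(\abs{\phi}^2) \geq -\alp^2\abs{\phi}^2$ by hypothesis; hence, for every $t \in I$,
\begin{equation*}
	\nrm{\covD_t \phi(t)}_{L^2_{x}}^2 \leq 2\bfE + \alp^2 N(t), \qquad \sum_{j=1,2}\nrm{\covD_j \phi(t)}_{L^2_{x}}^2 \leq 2\bfE + \alp^2 N(t).
\end{equation*}
Thus everything reduces to bounding $N$ on $I$.

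\emph{Next}, I would control $N$. Since $\rd_t \phi$ and $\covD_t \phi$ differ by $\pm i A_0 \phi$ and $A_0\abs{\phi}^2$ is real, $\Re(\br{\phi}\,\rd_t\phi) = \Re(\br{\phi}\,\covD_t\phi)$ pointwise, so
\begin{equation*}
	\abs{\tfrac{\ud}{\ud t} N(t)} = 2\,\abs{\Re\int_{\bbR^{2}} \br{\phi}\,\covD_t\phi\,\ud x} \leq 2\sqrt{N(t)}\,\nrm{\covD_t\phi(t)}_{L^2_{x}} \leq 2\sqrt{N(t)}\sqrt{2\bfE + \alp^2 N(t)} \aleq_{\bfE,\alp} 1 + N(t),
\end{equation*}
the differentiation being justified by the regularity of the solution exactly as for \eqref{eq:energyConsv}. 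Gronwall's inequality, anchored at any $t_0 \in I$ (e.g. $t_0 = 0$ in the application to Theorem \ref{thm:gwp4CSH}), then yields $N(t) \leq (1 + N(t_0))e^{C\abs{I}} - 1 =: N_{\ast} < \infty$ for all $t \in I$, with $C = C(\bfE, \alp)$. By the previous display, $\nrm{\covD_t\phi}_{L^\infty_t(I;L^2_{x})}$ and $\nrm{\covD_j\phi}_{L^\infty_t(I;L^2_{x})}$ are now bounded as well.

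\emph{Finally}, I would pass from $\covD_j\phi$ to $\rd_j\phi$. By the diamagnetic (Kato) inequality $\abs{\nb\abs{\phi}}^2 \leq \sum_{j=1,2}\abs{\covD_j\phi}^2$ pointwise, so $\nrm{\abs{\phi}}_{\dot H^1_{x}}^2 \leq 2\bfE + \alp^2 N_{\ast}$; combined with Gagliardo--Nirenberg on $\bbR^{2}$,
\begin{equation*}
	\nrm{\phi(t)}_{L^4_{x}}^2 = \nrm{\abs{\phi(t)}}_{L^4_{x}}^2 \aleq \nrm{\abs{\phi(t)}}_{L^2_{x}}\,\nrm{\abs{\phi(t)}}_{\dot H^1_{x}} \aleq N_{\ast}^{1/2}(2\bfE + \alp^2 N_{\ast})^{1/2},
\end{equation*}
which is bounded on $I$. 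From the elliptic equations $\lap A_j = \pm\rd_k\Im(\phi\br{\covD_t\phi})$ in \eqref{eq:CSH-C} and the $L^{4/3}_{x} \to L^4_{x}$ boundedness of $(-\lap)^{-1}\rd_k$ on $\bbR^{2}$ (Calder\'on--Zygmund together with the Hardy--Littlewood--Sobolev inequality), followed by H\"older,
\begin{equation*}
	\nrm{A_j(t)}_{L^4_{x}} \aleq \nrm{\phi(t)\,\br{\covD_t\phi(t)}}_{L^{4/3}_{x}} \leq \nrm{\phi(t)}_{L^4_{x}}\,\nrm{\covD_t\phi(t)}_{L^2_{x}},
\end{equation*}
again bounded on $I$. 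Since $\rd_j\phi = \covD_j\phi + iA_j\phi$, we get $\nrm{\rd_j\phi(t)}_{L^2_{x}} \leq \nrm{\covD_j\phi(t)}_{L^2_{x}} + \nrm{A_j(t)}_{L^4_{x}}\nrm{\phi(t)}_{L^4_{x}}$, bounded on $I$; together with $\nrm{\phi(t)}_{L^2_{x}}^2 = N(t) \leq N_{\ast}$ this gives the uniform bound for $\nrm{\phi}_{H^1_{x}}$, and $\nrm{\covD_t\phi}_{L^2_{x}}$ was already bounded. All bounds depend only on $\bfE$, $\alp$, $\abs{I}$, and $\nrm{(\phi,\covD_t\phi)(t_0)}_{H^1_{x} \times L^2_{x}}$.

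The main obstacle is the control of $N$: when $m = 0$ the energy does \emph{not} bound $\nrm{\phi}_{L^2_{x}}$, so the (possibly negative) potential term $V(\abs{\phi}^2)$ cannot be discarded in the first step without first controlling $N$ --- which is why the charge estimate, using only the evolution equation, is essential. A secondary subtlety is that bounding $\nrm{\nb\phi}_{L^2_{x}}$ naively via $\nb\phi = \covD\phi + iA\phi$ looks circular, because the elliptic bound for $A_j$ needs $\phi \in L^4_{x}$ and hence (via Gagliardo--Nirenberg) $\nb\phi \in L^2_{x}$; this is precisely what the diamagnetic inequality circumvents, since it controls $\nrm{\abs{\phi}}_{\dot H^1_{x}}$ --- and therefore $\nrm{\phi}_{L^4_{x}}$ --- directly by $\nrm{\covD\phi}_{L^2_{x}}$.
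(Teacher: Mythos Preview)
Your proof is correct and follows essentially the same approach as the paper's own proof: extract coercivity from the conserved energy, use Gronwall on $\nrm{\phi}_{L^2_x}^2$ via $\rd_t\abs{\phi}^2 = 2\Re(\br{\phi}\,\covD_t\phi)$, then apply the Kato (diamagnetic) inequality together with Gagliardo--Nirenberg to bound $\nrm{\phi}_{L^4_x}$, invert the elliptic equation for $A_j$ to get $\nrm{A_j}_{L^4_x}$, and conclude $\nrm{\rd_j\phi}_{L^2_x}$ from $\rd_j\phi = \covD_j\phi + iA_j\phi$. The only cosmetic difference is that the paper treats the case $m>0$ separately (where the $L^2$ bound is immediate), while you run the Gronwall argument uniformly in $m$, which is equally valid.
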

\begin{proof}
We will follow the ideas of \cite[Proof of Theorem 2.3]{Selberg:2012vb}. From the hypothesis, it follows that
\begin{equation} \label{eq:energy:pf:1}
	\nrm{\covD_{t} \phi(t)}^{2}_{L^{2}_{x}} + \sum_{j = 1,2} \nrm{\covD_{j}\phi(t)}^{2}_{L^{2}_{x}} + m \nrm{\phi(t)}^{2}_{L^{2}_{x}}
	\leq 2 \bfE(t) + \alp^{2} \nrm{\phi(t)}^{2}_{L^{2}_{x}}.
\end{equation}

Using this, we shall first show that $\nrm{\phi(t)}_{L^{2}}$ is uniformly bounded on $I$. When $m > 0$ this follows immediately from \eqref{eq:energy:pf:1}; thus it suffices to consider $m = 0$. Integrating the identity $\rd_{t} \abs{\phi(t)}^{2} = 2 \Re (\phi \overline{\covD_{t} \phi})$ in time and applying Cauchy-Schwarz and \eqref{eq:energy:pf:1}, we have
\begin{align*}
	\nrm{\phi(t)}^{2} - \nrm{\phi(0)}^{2} 
	\leq & 2 \int_{0}^{t} \nrm{\phi(t')}_{L^{2}_{x}} \nrm{\covD_{t} \phi(t')}_{L^{2}_{x}} \, \ud t' \\
	\leq & \int_{0}^{t} (1+\alp^{2})\nrm{\phi(t')}^{2}_{L^{2}_{x}} \, \ud t' + 2 t \bfE.
\end{align*}

Applying Gronwall, we conclude $\sup_{t \in I} \nrm{\phi(t)}_{L^{2}_{x}} < \infty$. Then combined with \eqref{eq:energy:pf:1}, we see that
\begin{equation} \label{eq:energy:pf:2}
\sup_{t \in I} ( \nrm{\covD_{t} \phi(t)}^{2}_{L^{2}_{x}} + \sum_{j = 1,2} \nrm{\covD_{j}\phi(t)}^{2}_{L^{2}_{x}} + \nrm{\phi}^{2}_{L^{2}_{x}} ) < \infty
\end{equation}

We are now only left to show that $\nrm{\phi(t)}_{H^{1}_{x}}$ is uniformly bounded on $I$. In view of the identity $\covD_{j} \phi = \rd_{j} \phi - i A_{j} \phi$, it suffices to prove $\sup_{t \in I} \nrm{A_{j} \phi(t)}_{L^{2}_{x}} < \infty$ for $j=1,2$. 

Observe that $\sup_{t \in I} \nrm{\phi(t)}_{L^{4}_{x}} < \infty$, which follows from \eqref{eq:energy:pf:2} by the Kato inequality $\rd_{j} \abs{\phi } \leq \abs{\covD_{j} \phi}$ and Gagliardo-Nirenberg interpolation. Then from the elliptic equation for $A_{j}$ in \eqref{eq:CSH-C}, for each $t \in I$ we have 
\begin{equation*}
	\nrm{A_{j}(t)}_{L^{4}_{x}} \aleq \nrm{\Im(\phi \covD_{t} \phi)(t)}_{L^{4/3}_{x}} \leq \nrm{\phi(t)}_{L^{4}_{x}} \nrm{\covD_{t} \phi(t)}_{L^{2}_{x}}
\end{equation*}
where the right-hand side is uniformly bounded in $t \in I$. By H\"older, it then follows that $\sup_{t \in I} \nrm{A_{j} \phi(t)}_{L^{2}_{x}} < \infty$ for $j=1,2$ as desired. \qedhere
\end{proof}

From Theorem \ref{thm:lwp4CSH} and Lemma \ref{lem:energy}, finite energy GWP (Theorem \ref{thm:gwp4CSH}) now follows as a simple corollary.

\section*{Acknowledgement}
The author is indebted to H. Huh for many helpful discussions on Chern-Simons theories. This work had been initiated while the author was a graduate student at Princeton University, during which he was supported by the Samsung Scholarship. The author is a Miller Research Fellow, and graciously acknowledges the Miller Institute at UC Berkeley for the support. 
\bibliographystyle{amsplain}
\providecommand{\bysame}{\leavevmode\hbox to3em{\hrulefill}\thinspace}
\providecommand{\MR}{\relax\ifhmode\unskip\space\fi MR }
\providecommand{\MRhref}[2]{%
  \href{http://www.ams.org/mathscinet-getitem?mr=#1}{#2}
}
\providecommand{\href}[2]{#2}


\end{document}